\documentclass[10pt]{amsart}

\usepackage{amsmath}
\usepackage{amsthm}
\usepackage{amssymb}
\usepackage[pagebackref=true]{hyperref}

\usepackage[all]{xy}

\usepackage{diagrams}

\theoremstyle{plain}
\newtheorem{thm}{Theorem}[section]
\newtheorem{dfn}[thm]{Definition}
\newtheorem{prp}[thm]{Proposition}
\newtheorem{cor}[thm]{Corollary}
\newtheorem{lma}[thm]{Lemma}
\newtheorem{lma*}[subsection]{Lemma}
\newtheorem{prp*}[subsection]{Proposition}
\newtheorem{dfn*}[subsection]{Definition}

\theoremstyle{remark}
\newtheorem{rmk}[thm]{Remark}
\newtheorem{exms}[thm]{Examples}

\def\Dd{\mathcal{D}}

\def\Ll{\mathcal{L}}

\def\lra{\leftrightarrows}

\def\sg{\sigma}
\def\al{\alpha}
\def\be{\beta}
\def\el{\mathrm{el}}
\def\Aa{\mathcal{A}}
\def\Ii{\mathcal{I}}

\def\inc{\hookrightarrow}
\def\op{\mathrm{op}}

\def\CC{\mathbb{C}}
\def\DD{\mathbb{D}}

\def\Sets{\mathrm{Sets}}

\def\Aut{\mathrm{Aut}}
\def\Oo{\mathcal{O}}

\def\Sg{\Sigma}

\def\LRB{\mathrm{LRB}}
\def\Iso{\mathrm{Iso}}
\def\Oper{\mathrm{Oper}}
\def\Coll{\mathrm{Coll}}
\def\Hom{\mathrm{Hom}}
\def\hgt{\mathrm{ht}}
\def\Xx{\mathfrak{X}}
\def\Yy{\mathfrak{Y}}

\def\nn{\underline{n}}

\def\mm{\underline{m}}
\def\kk{\underline{k}}
\def\11{\underline{1}}
\def\nCat{\mathrm{nCat}}

\def\EE{\mathbb{E}}

\def\el{\mathrm{el}}

\def\FF{\mathbb{F}}

\def\Seg{\mathrm{Seg}}
\def\UU{\mathbb{U}}

\def\Pp{\mathcal{P}}
\def\Sg{\Sigma}

\def\Oo{\mathcal{O}}
\def\FinSet{\mathrm{FinSet}}

\newarrow{Act}--+->
\newarrow{In}{littlevee}--->
\newarrow{convIn}<---<
\newarrow{Inv}{littlevee}----
\newarrow{Identity}=====
\newarrow{DotsIn}{littlevee}...>
\newarrow{DotsAct}..+.>

\newdir{ >}{{}*!/-5pt/@{>}}

\begin{document}

\setcounter{tocdepth}{1}

\title{Moment categories and operads}

\author{Clemens Berger}

\date{December 9, 2022}

\subjclass{18A32, 18M60; 18M85, 18N70}

\keywords{Moment category; Operad; Active/inert factorisation system; Strict Segal condition; Plus construction; Monadicity.}

\maketitle
\begin{center}\emph{To Bob Rosebrugh, in gratitude and friendship}\end{center}
\begin{abstract}A moment category is endowed with a distinguished set of split idempotents, called moments, which can be transported along morphisms. Equivalently, a moment category is a category with an active/inert factorisation system fulfilling two simple axioms. These axioms imply that the moments of a fixed object form a monoid, actually a left regular band.

Each locally finite unital moment category defines a specific type of operad which records the combinatorics of partitioning moments into elementary ones. In this way the notions of symmetric, non-symmetric and $n$-operad correspond to unital moment structures on $\Gamma$, $\Delta$ and $\Theta_n$ respectively.

There is an analog of the plus construction of Baez-Dolan taking a unital moment category $\CC$ to a unital hypermoment category $\CC^+$. Under this construction, $\CC$-operads get identified with $\CC^+$-monoids, i.e. presheaves on $\CC^+$ satisfying strict Segal conditions. We show that the plus construction of Segal's category $\Gamma$ embeds into the dendroidal category $\Omega$ of Moerdijk-Weiss.\end{abstract}

\tableofcontents

\section*{Introduction}

What is an operad ? Although the pioneering work of May \cite{May} and Boardman-Vogt \cite{BV} is half a century old, the question is more intricate than it might seem at first sight. A multitude of types of operads have appeared (symmetric, non-symmetric, cyclic, modular, coloured, ...) and are used in different areas of mathematics and even outside. A common feature is the existence of a process of substitution. Here we follow an unconventional route, not based on the concept of substitution, and not intended to enlarge further the panorama of operadic structures, but rather to look at them from a different perspective.

Starting point is the existence of an \emph{active/inert factorisation system} with suitable properties. The basic example is Segal's category $\Gamma$, the dual of the category of finite sets and partial maps. Our terminology has been motivated by Lurie \cite{Lu} who uses extensively the inert/active factorisation system of $\Gamma^\op$. Active morphisms in $\Gamma$ can be viewed as partitions of the target, indexed by the elements of the source. Inert morphisms are simply inclusions. Segal's motivation \cite{Se} to choose $\Gamma$ comes from the existence of a canonical covariant functor $\gamma_\Delta:\Delta\to\Gamma$ linking simplicial combinatorics to $\Gamma$. There is an active/inert factorisation system for $\Delta$ compatible with this functor: active morphisms are endpoint-preserving, inert morphisms distance-preserving. By means of a wreath product \cite{Be2} this active/inert factorisation system carries over to Joyal's categories $\Theta_n$ \cite{J}.

In all three examples, inert morphisms have \emph{unique} active retractions. This produces split idempotent endomorphisms, called \emph{moments}, in bijective correspondence with inert subobjects. We call the whole structure a \emph{moment category}. Moreover, in all three cases, there is a well-defined object with a single centric moment, called \emph{unit}. Inert subobjects are called \emph{elementary} if they have a unit as domain. It turns out that the scheme according to which moments decompose into elementary moments, defines an operad-like structure, and this is so for any unital moment category of finite type. $\Delta$-operads are non-symmetric operads, $\Gamma$-operads are symmetric operads, and $\Theta_n$-operads are Batanin's $n$-operads \cite{Ba}.

There is an essentially unique augmentation $\gamma_\CC:\CC\to\Gamma$ for each unital moment category $\CC$ of finite type. This functor can be considered as a notion of \emph{cardinality}. It also suggests that active morphisms in $\CC$ are generalised partitions, and inert morphisms generalised inclusions. Taking the existence of such an augmentation as basic leads to the more flexible notion of \emph{hypermoment category} where the notion of moment is superseded by the notion of inert subobject.

There are three interesting examples of hypermoment categories in literature. The dendroidal category $\Omega$ of Moerdijk-Weiss \cite{MW}, the graphoidal category $\Gamma_\updownarrow$ of Hackney-Robertson-Yau \cite{HRY} as well as the undirected analog $\UU$ recently introduced by the same authors \cite {HRY2,H}. In all three cases, the inert morphisms are inclusions, often referred to as \emph{outer} face operators. The active morphisms are either degeneracy operators or \emph{inner} face operators. The latter can be understood geometrically as insertion of dendrices (resp. graphices, resp. graphs) into vertices of dendrices (resp. graphices, resp. graphs). The augmentation takes an object to its vertex set. $\Omega$-operads (resp. $\Gamma_\updownarrow$-operads, resp. $\UU$-operads) are tree-hyperoperads (resp. directed hyperoperads, resp. hyperoperads) in a sense close to the original notion of Getzler-Kapranov \cite{GK} bearing the same name. This motivated our terminology.

The terminal $\CC$-operad plays a special role, and we call algebras over the terminal $\CC$-operad \emph{$\CC$-monoids}. The structure of $\CC$-monoid is interesting in its own because $\CC$-monoids are ``special'' presheaves on the active part of $\CC$ and are easier to describe than operads. Thanks to the inert part of $\CC$, the notion of $\CC$-monoid can be reformulated by means of strict Segal conditions. It is then natural to define $\CC_\infty$-monoids as simplicial presheaves on $\CC$ subject to homotopical Segal conditions. The table below (copied from \ref{table}) summarises the notions we get in this way.

\begin{center}\begin{tabular}{|c|c|c|c|c|}
\hline $\CC$ & $\CC$-operad & $\CC$-monoid & $\CC_\infty$-monoid & group-like $\CC_\infty$-monoid\\
\hline $\Gamma$ & sym. operad & comm. monoid & $E_\infty$-space & infinite loop space\\
\hline $\Delta$& non-sym. operad & assoc. monoid & $A_\infty$-space & loop space\\
\hline $\Theta_n$&$n$-operad & $n$-monoid & $E_n$-space&$n$-fold loop space\\
\hline $\Omega$ & tree-hyperoperad & sym. operad & $\infty$-operad& (stable $\infty$-operad)\\
\hline $\Gamma_{\updownarrow}$ & directed hyperoperad & properad & $\infty$-properad& (stable $\infty$-properad)\\
\hline $\UU$ & hyperoperad & modular operad & $\infty$-modular op. & (stable $\infty$-modular op.)\\
\hline\end{tabular}\end{center}\vspace{1ex}

Most notably, symmetric operads appear twice, as $\Gamma$-operads and as $\Omega$-monoids. This reveals a tight relationship between $\Gamma$ and $\Omega$ implicitly present at several places in literature \cite{HHM,Ba,CHH}. We deduce this relationship from an analog of the \emph{plus construction} of Baez-Dolan \cite{BD}. For hypermoment categories $\CC$, the plus construction $\CC^+$ is defined as a category of \emph{abstract $\CC$-trees}, which are composable chains of active morphisms of $\CC$ starting with a unit of $\CC$. The inert part of $\CC$ contributes to the inert part of $\CC^+$. For $\CC=\Gamma$ it turns out that the plus construction $\Gamma^+$ embeds as a non-full hypermoment subcategory into $\Omega$. While $\Gamma$ has a single unit, $\Gamma^+$ and $\Omega$ have a unit for each natural number, namely the corolla with $n$ leaves. It is a pleasant feature that the units of a hypermoment category are always determined intrinsically by the active/inert factorisation system, so that there is no choice here. The nilobjects (i.e. objects of cardinality $0$) as well are intrinsically given. The plus construction converts units of $\CC$ into nilobjects of $\CC^+$ and general objects of $\CC$ into units of $\CC^+$. This reproduces the general scheme suggested by Baez-Dolan \cite{BD}. Under the plus construction, $\CC$-operads get identified with $\CC^+$-monoids. Therefore, homotopy $\CC$-operads can be modeled as $(\CC^+)_\infty$-monoids, i.e. as simplicial presheaves on $\CC^+$ satisfying homotopical Segal conditions. An incarnation of this idea is the Cisinski-Moerdijk model structure \cite{CM} on $\Omega$-spaces. Indeed, the combinatorial difference between $\Gamma^+$ and $\Omega$ disappears at a homotopical level as follows from \cite[Theorem 5.1]{CHH} of Chu-Haugseng-Heuts.

Active and inert parts of a hypermoment category $\CC$ interact via the factorisation system: the active part underlies the substitutional aspect of $\CC$-operads while the inert part is responsible for the homotopical aspects of $\CC$-operads. We discuss two structural properties of a hypermoment category: strong unitality and extensionality. \emph{Strong unitality} means that the unit- and nilobjects of the hypermoment category are dense in the inert part. This defines density colimit cocones, and a simplicial presheaf on $\CC$ defines a $\CC_\infty$-monoid (i.e. a \emph{Segal presheaf} on $\CC$) if and only if it takes these density colimit cocones to homotopy limit cones.

This abstract condition amounts precisely to Segal's notion \cite{Se} of ``special'' $\Delta$-, resp. $\Gamma$-space. The \emph{same} condition is the key in Rezk's model structure \cite{R} for $\Theta_n$-spaces, and also in Cisinski-Moerdijk's model structure \cite{CM} for $\Omega$-spaces. This common thread suggests that for any strongly unital hypermoment category $\CC$ an analogous model structure exists whose fibrant objects are $\CC_\infty$-monoids. Chu-Haugseng \cite{CH} develop a closely related concept in the $\infty$-categorical framework: \emph{extendable algebraic patterns}. Although their axiomatics are somewhat different, it is striking that their examples are identical to ours.

A unital hypermoment category is called \emph{extensional} if elementary inert morphisms admit pushouts along active morphisms in a compatible way with the active/inert factorisation system. Again, this is a property shared by all examples so far mentionned. Extensionality implies that there is a well-defined process of inserting $\CC$-trees into vertices of $\CC$-trees. This is one of the essential ingredients needed for the forgetful functor from $\CC$-operads to $\CC$-collections to be \emph{monadic}, despite of the complicated structure the symmetries of a $\CC$-collection may have. On the other hand, if the amount of ``allowed'' symmetries is restricted (in which case the hypermoment category is said to be \emph{rigid}) in such a way that the active/inert factorisation becomes unique when the domain is a unit, then the dual of the active part carries the structure of an \emph{operadic category} in the sense of Batanin-Markl \cite{BaM}. We get valuable examples of operadic categories in this way. Note that the moment categories $\Delta,\Gamma$ and $\Theta_n$ are rigid, while the hypermoment categories $\Omega,\Gamma_\updownarrow$ and $\UU$ are not.

We have limited ourselves to the combinatorial aspects of moment categories and hope to pursue homotopical applications elsewhere. In Barwick's article \cite{Ba} much abstract homotopy theory is developed in the setting of his operator categories. The algebraic patterns of Chu-Haugseng \cite{CH} are even closer to our approach.\vspace{1ex}

Let us now describe the contents of the individual sections:\vspace{1ex}

Section 1 defines moment categories. Centric moment categories are studied in some detail because they relate to restriction categories in the sense of Cockett-Lack \cite{CL1}. We present an alternative characterisation of moment categories in terms of the existence of pushforward operations transporting moments across morphisms. Our axioms are less restrictive than those of Cockett-Lack insofar as moments are not required to commute with each other. It is remarkable that the axioms entail nevertheless that moments of a fixed object form a so called left regular band.

Section 2 deals with unital moment categories and defines their operads and monoids. Moments splitting over a unit are called elementary, and pushforward takes ``disjoint'' moments to ``disjoint'' moments. This implies that each active morphism induces a partition of the identity moment of the target into submoments indexed by the elementary moments of the source. Composition of active morphisms induces then an operad-like structure. Every unital moment category of finite type is shown to be augmented over Segal's category $\Gamma$.

Section 3 studies hypermoment categories with special emphasis on the dendroidal category $\Omega$ and the graphoidal category $\Gamma_\updownarrow$. The plus construction is introduced showing that it transforms $\CC$-operads into $\CC^+$-monoids. We then discuss strongly unital and extensional hypermoment categories. For extensional hypermoment categories $\CC$, the objects of the plus construction $\CC^+$, the so-called $\CC$-trees, can be inserted into vertices of $\CC$-trees. This ultimately implies that the forgetful functor from $\CC$-operads (or, equivalently, $\CC^+$-monoids) to $\CC$-collections is monadic.

Appendix \ref{operadicappendix} shows that the dual of the active part of a rigid hypermoment category is an operadic category in a canonical way.

Appendix \ref{dendrixappendix} contains a combinatorial description of the embedding of $\Gamma^+$ into $\Omega$, identifying the category of $\Gamma$-trees with the category of reduced dendrices.

\subsection*{Acknowledgements}This text would not exist without the many instructive and enlightening discussions I had over the years with Clark Barwick, Michael Batanin, Richard Garner, Ralph Kaufmann, Steve Lack, Ieke Moerdijk and Mark Weber.

I'm also grateful to the referee for his careful reading of the manuscript which allowed me to eliminate several inaccuracies and to improve considerably the presentation of the section on hypermoment categories. This research benefitted from financial support by the ERC-project DuaLL of Mai Gehrke.

\section{Moment categories}

This introductory section contains slightly more than absolutely necessary for the application to operads in the following sections. From a purely abstract point of view, moment categories can be considered as categorification of left regular bands, well known in semigroup literature, cf. \cite{MSS}. It is remarkable that the construction of the universal commutative quotient of a left regular band extends to moment categories. Because the underlying notion of commutativity is quite subtle we introduce the term \emph{centric} for it. Centric moment categories are dual to split restriction categories in the sense Cockett-Lack \cite{CL1}.

\subsection{Active/inert factorisation systems}A class of morphisms in a category $\CC$ is said to be \emph{closed} if it is closed under composition and contains all isomorphisms of $\CC$. A subcategory of $\CC$ is said to be \emph{wide} if it contains all objects and all isomorphisms of $\CC$. Any closed class of morphisms in $\CC$ defines a wide subcategory of $\CC$, and conversely. For ease of exposition we will tacitly identify closed classes with the corresponding wide subcategories.

A $(\CC_{act},\CC_{in})$-\emph{factorisation system} on a category $\CC$ consists of two \emph{closed} classes $\CC_{act}$ and $\CC_{in}$ such that every morphism in $\CC$ factors in an \emph{essentially unique} way as $f=f_{in}f_{act}$ where $f_{act}$ belongs to $\CC_{act}$ and $f_{in}$ belongs to $\CC_{in}$, i.e. each morphism may be written essentially uniquely as a morphism in $\CC_{act}$ followed by a morphism in $\CC_{in}$. Essential uniqueness means that for any two factorisations $f_{in}f_{act}=f'_{in}f'_{act}$ there is a \emph{unique} isomorphism $h$ such that $f_{in}=f'_{in}h$ and $hf_{act}=f'_{act}$. Uniqueness of $h$ is automatic if either $\CC_{act}$ consists of epimorphisms or $\CC_{in}$ consists of monomorphisms. It is equivalent to the condition that the morphisms in $\CC_{act}$ are left orthogonal to the morphisms in $\CC_{in}$.

Throughout this text, the morphisms in $\CC_{act}$ will be called \emph{active}, the morphisms in $\CC_{in}$ \emph{inert}, and the factorisation system itself will be called an \emph{active/inert factorisation system}. The wide subcategory associated to $\CC_{act}$ (resp. $\CC_{in}$) is the \emph{active} (resp. \emph{inert}) \emph{part} of $\CC$. The axioms of a moment category will imply that inert morphisms are split monomorphisms, that epimorphisms are active, but that in general active morphisms do not need to be epimorphic. In Section \ref{hyper} we will introduce \emph{hypermoment categories}. These are equipped with an active/inert factorisation where inert morphisms are not necessarily split monomorphisms.

\begin{dfn}A \emph{moment category} is a category equipped with an active/inert factorisation system such that

\begin{itemize}\item[(M1)]every inert morphism has a unique active retraction;\item[(M2)]if $fi=g$ for an inert morphism $i$ and active morphisms $f,g$ then $f=gr$ where $r$ is the unique active retraction of $\,i$ provided by \emph{(M1)}.\end{itemize}

\noindent A moment category is called \emph{centric} if \begin{itemize}\item[(MC)]every active morphism has at most one inert section.\end{itemize}

A \emph{moment} of an object $A$ is an endomorphism $\phi$ of $A$ such that if $\phi=\phi_{in}\phi_{act}$ then $\phi_{act}\phi_{in}=1_B$ for an object $B$. We shall say that the moment $\phi\,$ \emph{splits over} $B$.\vspace{1ex}

A morphism is called \emph{retractive} if it is active and admits an inert section.\vspace{1ex}

A moment functor is a functor preserving active and inert morphisms.\end{dfn}

\noindent It follows from (M1) and the essential uniqueness of active/inert factorisations that any morphism which is active \emph{and} inert must be an isomorphism, hence the intersection $\CC_{act}\cap\CC_{in}$ is the closed class $\CC_{iso}$ of isomorphisms of $\CC$.\vspace{1ex}

We shall in general denote active morphisms by arrows of the form $\xymatrix{{}\ar[r]|+&}$ and inert morphisms by arrows of the form $\xymatrix{{}\ar@{ >->}[r]&}$. Axiom (M2) is equivalent to the following  axiom (M2)$'$ which is mnemotechnically easier to retain: \begin{itemize}\item[(M2)$'$]\emph{If the left square below commutes then the right square as well $$\xymatrix{A\ar[r]^f|+&B&&A\ar[d]_r|+\ar[r]^f|+&B\ar[d]^{r'}|+\\
A'\ar@{ >->}[u]^{i}\ar[r]_{g}|+&B'\ar@{ >->}[u]_{i'}&&A'\ar[r]_{g}|+&B'}$$where $r,r'$ are the unique active retractions of $i,i'$ provided by (M1).}\end{itemize}\vspace{1ex}

Each moment of $A$ satisfies $\,\phi\phi=\phi_{in}\phi_{act}\phi_{in}\phi_{act}=\phi_{in}\phi_{act}=\phi$ and is thus a \emph{split idempotent endomorphism} of $A$. We shall call an isomorphism class of inert morphisms with fixed target $A$ an \emph{inert subobject} of $A$.

\begin{lma}\label{inertsubobject}For each object of a moment category there is a canonical bijection between its moments and its inert subobjects.\end{lma}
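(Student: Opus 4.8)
The plan is to write down the bijection explicitly and then verify well-definedness and invertibility. Fix an object $A$ of the moment category.

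In one direction, I would send a moment $\phi$ of $A$ to the inert subobject represented by $\phi_{in}$, where $\phi=\phi_{in}\phi_{act}$ is an active/inert factorisation; here $\phi_{act}\colon A\to B$, $\phi_{in}\colon B\inc A$, and $\phi_{act}\phi_{in}=1_B$ holds by the definition of a moment (and, by essential uniqueness of factorisations, for one factorisation iff for all). Essential uniqueness also makes the assignment independent of the chosen factorisation: two factorisations of $\phi$ differ by a unique isomorphism, which exhibits the two inert morphisms as representatives of the same inert subobject.

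In the other direction, I would send an inert morphism $i\colon B\inc A$ to the endomorphism $\phi_i=ir$ of $A$, where $r\colon A\to B$ is the unique active retraction of $i$ furnished by (M1), so $ri=1_B$. The key observation is that, $r$ being active and $i$ inert, $\phi_i=i\circ r$ is already an active/inert factorisation of $\phi_i$; hence $(\phi_i)_{act}=r$, $(\phi_i)_{in}=i$, and $(\phi_i)_{act}(\phi_i)_{in}=ri=1_B$, so $\phi_i$ is a moment of $A$ splitting over $B$. This descends to inert subobjects: if $i'=ih$ with $h$ an isomorphism, then $h^{-1}r$ is active and is a retraction of $i'$, hence equals $r'$ by the uniqueness clause of (M1), so that $\phi_{i'}=i'r'=ih\,h^{-1}r=ir=\phi_i$.

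It remains to check that the two assignments are mutually inverse, which I expect to be immediate once the observation above is in place. Starting from a moment $\phi=\phi_{in}\phi_{act}$ with $\phi_{act}\phi_{in}=1_B$: since $\phi_{act}$ is an active retraction of $\phi_{in}$, uniqueness in (M1) forces it to be the retraction $r$ of $\phi_{in}$, so the moment recovered from $[\phi_{in}]$ is $\phi_{in}\phi_{act}=\phi$. Starting from an inert subobject $[i]$: the active/inert factorisation of $\phi_i=ir$ is $i\circ r$, so the inert subobject recovered is $[i]$. There is no real obstacle here; the only load-bearing facts are the essential uniqueness of active/inert factorisations and the uniqueness of active retractions in (M1) — in particular axiom (M2) is not used — and the one thing to get right is the remark that $ir$ is already in factored form.
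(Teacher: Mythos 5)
Your proposal is correct and follows essentially the same route as the paper: a moment is sent to the inert subobject of its inert part (well-defined by essential uniqueness of factorisations), and an inert morphism $i$ is sent to $ir$ with $r$ the unique active retraction from (M1), the key observation in both directions being that $i\circ r$ is already an active/inert factorisation. The paper states this in two sentences; you have merely spelled out the verifications (well-definedness on isomorphism classes, mutual inversibility), and your remark that (M2) is not needed is accurate.
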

\begin{proof}Two splittings of an idempotent endomorphism take place over canonically isomorphic objects so that each moment of $A$ defines an inert subobject of $A$. Conversely, by (M1), each inert morphism $i:B\to A$ generates a moment $ir:A\to A$ from which it derives, and isomorphic inert morphisms generate the same moment.\end{proof}

\subsection{Pushing forward moments}\label{pushforward}Moments have the advantage over inert subobjects that there is no need to quotient by any equivalence relation. The moments of an object form a subset of its endomorphism set.

It is important to observe that while moments are in bijection with inert subobjects it is in general not true that moments are also in bijection with retractive quotients, since a retractive morphism may have several distinct inert sections. Such a bijection holds if the moment category is \emph{centric}, i.e. fulfills axiom (MC).

It turns out that \emph{centric moment categories} have already been studied in literature since they correspond bijectively to \emph{split corestriction categories} in the sense of Cockett-Lack \cite{CL1}. We shall make this correspondence explicit, since it helps us to introduce useful terminology and notation, and gives us an alternative definition of a moment category in terms of ... its moments !

Let $f:A\to B$ be a morphism in a moment category and $\phi$ a moment of $A$. Choose a splitting $\phi=\phi_{in}\phi_{act}$ and denote the object over which $\phi$ splits by $A'$. Then factor the composite morphism $f\phi_{in}:A'\to B$ into an active morphism $f':A'\to B'$ followed by an inert morphism $\psi_{in}:B'\to B$. We denote the unique active retraction of $\psi_{in}$ by $\psi_{act}:B\to B'$ and the associated moment by $\psi=\psi_{in}\psi_{act}$.

The \emph{pushforward of the moment $\phi$ along $f$} is then defined by $f_*(\phi)=\psi$. The following diagram summarises the construction:\begin{gather}\label{diagram1}\begin{diagram}[small,silent]A&\rTo^f&B&&\\\dAct^{\phi_{act}}\uIn_{\phi_{in}}&&\uIn^{\psi_{in}}\dAct_{\psi_{act}}&\quad&\quad \text{with}\quad f_*(\phi_{in}\phi_{act})=\psi_{in}\psi_{act}.\\A'&\rAct_{f'}&B'&&\end{diagram}\end{gather}
 Observe that the isomorphism type of the morphism $\psi_{in}:B'\to B$ (with fixed $B$) only depends on the isomorphism type of the morphism $\phi_{in}:A'\to A$ (with fixed $A$), i.e. $\psi$ is uniquely determined by the morphism $f$ and the moment $\phi$.

By construction, the inner square is commutative. In order to show that the outer square is commutative as well, we decompose (\ref{diagram1}) as follows:
\begin{gather}\label{diagram2}\begin{diagram}[small]A&\rAct^{f_{act}}&B''&\rIn^{f_{in}}&B\\\dAct^{\phi_{act}}\uIn_{\phi_{in}}&&
\uIn^{\xi_{in}}\dAct_{\xi_{act}}&&\uIn^{\psi_{in}}\dAct_{\psi_{act}}\\A'&\rAct_{f'}&B'&\rIdentity&B'\end{diagram}\end{gather}
Observe that the active/inert factorisation of $f\phi_{in}$ can be obtained by composing the active/inert factorisation of $f_{act}\phi_{in}$ with $f_{in}$, hence we can assume $f_{in}\xi_{in}=\psi_{in}$. Therefore, the unique active retraction $\xi_{act}$ of $\xi_{in}$ is the composite of the unique active retractions of $f_{in}$ and $\xi_{in}$. In particular, $\psi_{act}f_{in}=\xi_{act}$ and the right hand square is commutative. It suffices now to show that $\xi_{act}f_{act}=f'\phi_{act}$. This follows from axiom (M2)$'$ since the left inner square of (\ref{diagram2}) commutes by construction.

As corollary we obtain for each moment $\phi$ of $A$ and each morphism $f:A\to B$ the important relation $f\phi=f_*(\phi)f$. Moreover, the essential uniqueness of active/inert factorisations implies that pushforward is functorial in the following sense: for $f:A\to B$ and $g:B\to C$ we have $(gf)_*(\phi)=g_*(f_*(\phi))$. Finally, it follows from the definition of the pushforward that $\phi_*(\psi)=\phi\psi$ for any two moments of the same object. This leads to the following definition:

\begin{dfn}\label{formaldfn}A \emph{moment structure} on a category consists in specifying for each object $A$, a set $m_A$ of special endomorphisms of $A$, called \emph{moments}, and for each morphism $f:A\to B$, a \emph{pushforward operation} $f_*:m_A\to m_B$ such that the following four axioms hold (for any $A$, any $\phi,\psi\in m_A$ and $f:A\to B,\,g:B\to C)$:
\begin{itemize}\item[(m1)] $1_A\in m_A$\item[(m2)]$\phi_*(\psi)=\phi\psi$ \item[(m3)]$(gf)_*=g_*f_*$\item[(m4)]$f\phi=f_*(\phi)f$\end{itemize}

A morphism $f:A\to B$ is called \emph{active} (resp. \emph{inert}) if $f_*(1_A)=1_B$ (resp. if $f$ admits a retraction $r:B\to A$ such that $f_*(\phi)=f\phi r$ for all $\phi\in m_A$).

A moment of $A$ is said to \emph{split over $B$} if there exists $i:B\lra A:r$ such that $ir=\phi$ and $ri=1_B$.\end{dfn}

\begin{lma}\label{formal}A category with moment structure enjoys the following properties:
\begin{itemize}\item[(i)]The moment set $\,m_A$ is a \emph{monoid} under composition such that $\phi\psi=\phi\psi\phi$ for all $\phi,\psi\in m_A$. In particular, each moment is \emph{idempotent}.\item[(ii)]For any $\phi,\psi\in m_A$ the relations $\psi\phi=\phi$ and  $\phi\psi=\psi$ jointly imply $\phi=\psi$.\item[(iii)]An endomorphism $\phi$ of $A$ belongs to $m_A$ if and only if $\phi_*(1_A)=\phi$.\\ For any morphism $f:A\to B$ and any $\phi\in m_A$ one has $f_*(\phi)=(f\phi)_*(1_A)$.\item[(iv)]Epimorphisms are active.\item[(v)]For any splitting $i:B\lra A:r$ of a moment $\phi\in m_A$, the retraction $r$ is active and the section $i$ is inert. In particular, $r_*(\phi)=1_B$ and  $i_*(1_B)=\phi$.\item[(vi)]The class of active (resp. inert) morphisms is closed.\item[(vii)]$f$ has an active/inert factorisation if and only if the moment $f_*(1_A)$ splits.\end{itemize}\end{lma}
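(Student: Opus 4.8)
The plan is to verify the seven properties essentially in the listed order, since several of the later ones feed on the earlier ones, and to lean heavily on the four axioms (m1)--(m4) of Definition~\ref{formaldfn}.

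First I would prove (i). Applying (m4) with $f=\psi\in m_A$ (noting $\psi\colon A\to A$) gives $\psi\phi=\psi_*(\phi)\psi$, and by (m2) $\psi_*(\phi)=\psi\phi$, so $\psi\phi=\psi\phi\psi$; this is exactly the stated identity. Taking $\psi=\phi$ yields $\phi\phi=\phi\phi\phi$, and combined with (m2) applied as $\phi_*(\phi)=\phi\phi$ together with functoriality one gets idempotency $\phi^2=\phi$; more directly, set $\psi=1_A\in m_A$ (by (m1)) in $\psi\phi=\psi\phi\psi$ after first establishing $\phi\phi=\phi$ from the fact that $\phi_*(1_A)=\phi$ will also be available once (iii) is in hand --- so a cleaner route is to first note from (m2) that $\phi_*\phi_* = (\phi\phi)_*$ by (m3) and $\phi_*(\psi)=\phi\psi$, then deduce $\phi$ is idempotent from $\phi_*(1_A)\in m_A$. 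I will organize this carefully so there is no circularity: idempotency $\phi\phi=\phi$ follows because $\phi\phi = \phi_*(1_A)\phi = \phi\,{\cdot}\,1_A\cdot$(something) --- more simply, $\phi_*(1_A)=\phi$ holds when $\phi\in m_A$ by applying (m4) with the trivial moment $1_A$: $\phi = \phi\cdot 1_A = f\phi$ with $f=\phi$ forces $f_*(1_A)f = \phi_*(1_A)\phi$, and separately $m_A$ is closed under composition by (m2) (since $\phi\psi=\phi_*(\psi)\in m_A$), so $m_A$ is a submonoid of $\End(A)$ containing $1_A$. The identity $\phi\psi=\phi\psi\phi$ then gives idempotency on setting $\psi=1_A$. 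Property (ii) is immediate from (i): $\psi\phi=\phi$ and $\phi\psi=\psi$ give $\phi = \psi\phi = \psi(\phi\psi)\cdot$--- rather, $\phi=\psi\phi$ and then $\phi\psi=\psi$ so $\phi = \psi\phi$, apply $\phi\cdot$ on the left or use $\phi\psi\phi=\phi\psi$ (from (i)) to get $\phi\cdot\psi = \psi$ hence $\phi = \phi\psi\phi = \psi\phi = \phi$? I will write: $\phi = \psi\phi$ and $\psi = \phi\psi$, so $\psi = \phi\psi = \phi(\psi\phi)= (\phi\psi)\phi=\psi\phi=\phi$.

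Next, (iii): the forward direction is the computation just mentioned ($\phi\in m_A \Rightarrow \phi_*(1_A)=\phi$, via (m4) applied to $f=\phi$ together with idempotency $\phi\cdot 1_A = \phi = \phi\phi$ read as $f\cdot 1_A = f_*(1_A)\cdot f$, i.e.\ $\phi = \phi_*(1_A)\phi$, then multiply on the right by... no: better, use (m4) with $\phi$ playing the role of the moment and $1_A\colon A\to A$ playing the role of $f$, giving $1_A\circ\phi = (1_A)_*(\phi)\circ 1_A = \phi$, unhelpful; the honest argument is $\phi_*(1_A) = (\phi\circ 1_A)_*$ applied to... I will use the relation proved in the text preceding the definition, $f_*(\phi)=(f\phi)_*(1_A)$, which the author has already derived, and which is restated as the second half of (iii); then $\phi\in m_A \Rightarrow \phi = \phi\circ 1_A$ has pushforward $\phi_*(1_A) = (\phi\cdot 1_A)_*(1_A)$, tautological, so instead: if $\phi\in m_A$ then by (m2) $\phi_*(1_A)=\phi\cdot 1_A=\phi$). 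The converse: if $\phi_*(1_A)=\phi$ then since $f_*$ lands in $m_A$ by definition of the pushforward operation, $\phi\in m_A$. The second sentence of (iii), $f_*(\phi)=(f\phi)_*(1_A)$, follows by factoring $f\phi = f\circ\phi$, applying (m3) to get $(f\phi)_* = f_*\phi_*$, hence $(f\phi)_*(1_A) = f_*(\phi_*(1_A)) = f_*(\phi)$ using the first half of (iii). Property (iv): if $f$ is epi and $\phi:=f_*(1_A)$, then from (m4), $f = f\circ 1_A = f_*(1_A)\circ f = \phi f$, and also $f = 1_B\circ f$; cancelling $f$ on the right gives $\phi = 1_B$, so $f_*(1_A)=1_B$, i.e.\ $f$ is active by the definition in~\ref{formaldfn}. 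Property (v): given $ir=\phi$, $ri=1_B$, first $r$ is epi (it is split), hence active by (iv), so $r_*(1_A)=1_B$; then $r_*(\phi) = r_*(ir) $, and using $\phi\in m_A$ one computes $r_*(\phi)$ by pushing the moment $\phi$ forward along $r$: by (iii), $r_*(\phi) = (r\phi)_*(1_A)=(r\,ir)_*(1_A) = (r\phi)_*(1_A)$; since $r\phi = r\,ir = (ri)(... )$--- cleanly, $r\phi = r(ir) = (ri)r\cdot$? no, $r\phi = rir$ and $ri=1$, so $r\phi = r\cdot i \cdot r$, hmm, $ri=1_B$ acts on the left of $r$ only if composable; $r\phi r = r(ir)r = (ri)(ir)=1_B 1_B = 1_B$--- wait we want $r_*(\phi)=1_B$: use (m4) with the moment $\phi$ and morphism $r$: $r\phi = r_*(\phi)r$, and $r\phi = rir$; multiply on the right by $i$: $r\phi i = r_*(\phi)ri = r_*(\phi)$, while $r\phi i = rir\cdot i = r i (ri) = ri\cdot 1 = 1_B$, so $r_*(\phi)=1_B$. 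For $i$: it has retraction $r$, and for any $\theta\in m_B$ one needs $i_*(\theta) = i\theta r$; take $\theta=1_B$, then $i_*(1_B)= i\cdot 1_B\cdot r = ir = \phi$ provided we first check $i_*(1_B)=i\theta r$ in general --- actually the \emph{definition} of inert in~\ref{formaldfn} requires exhibiting the retraction and the formula $i_*(\theta)=i\theta r$ for \emph{all} $\theta\in m_B$; I will verify this by (m4): $i\theta = i_*(\theta)i$, and since $i_*(\theta)\in m_A$ splits --- rather, multiply on the right by $r$: $i\theta r = i_*(\theta)ir = i_*(\theta)\phi$, and since $i_*(\theta)\in m_A$ and $\phi$ is the moment through which $i$ factors, I expect $i_*(\theta)\phi = i_*(\theta)$; this will need $i_*(\theta)\le \phi$ in the band order, which I would extract from functoriality $i_* = $ (splitting data) --- this is the delicate point.

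The main obstacle is exactly this last verification inside (v) --- showing the section $i$ of a split moment is inert in the formal sense, i.e.\ that $i_*(\theta) = i\theta r$ for every $\theta \in m_B$, which amounts to showing $i_*(\theta)\phi = i_*(\theta)$, i.e.\ that the moment $i_*(\theta)$ is ``$\le\phi$'' in the left-regular-band order on $m_A$. I expect to get this from the functoriality axiom (m3) applied to $B \xrightarrow{i} A \xrightarrow{r} B \xrightarrow{i} A$: from $r_*i_* = (ri)_* = (1_B)_* = \mathrm{id}$ and $i_*r_* = (ir)_* = \phi_* $, and since $\phi_*(\mu)=\phi\mu$ for $\mu\in m_A$ by (m2), one gets $i_*(r_*(\mu)) = \phi\mu$ for all $\mu\in m_A$, and setting $\mu = i_*(\theta)$: $i_*(r_*(i_*(\theta))) = \phi\, i_*(\theta)$; but $r_*i_* = \mathrm{id}$, so the left side is $i_*(\theta)$, giving $i_*(\theta) = \phi\, i_*(\theta)$. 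Combined with the symmetric computation (or with (m4): $i\theta = i_*(\theta)i$, multiply right by $r$) this yields $i_*(\theta) = i\theta r$, completing (v). Property (vi) is then a matter of bookkeeping: closure of active morphisms under composition is (m3) plus the definition $f_*(1)=1$; closure under composition of inert morphisms follows from (v)-type reasoning and (m3) on the retractions (the composite of the retractions is a retraction, and the pushforward formula composes); identities and isomorphisms are handled by (m1), (m3) and (iv). Finally (vii): if $f$ factors as inert $f_{in}$ after active $f_{act}$, then by (m3) and the definitions $f_*(1_A) = (f_{in})_*((f_{act})_*(1_A)) = (f_{in})_*(1)$, which splits by the very definition of inert (it equals $i\cdot 1\cdot r = ir$ which splits over the intermediate object) --- more precisely one reads off a splitting of $f_*(1_A)$ from the retraction data of $f_{in}$; conversely, if $\phi:=f_*(1_A)$ splits as $i r$ with $ri=1_B$ over $B$, set $f_{act} := rf\colon A\to B$ and $f_{in}:=i\colon B\to A$; then $f_{in}f_{act} = irf = \phi f = f$ (using $f = \phi f$ from (m4) as in (iv)), $f_{in}=i$ is inert by (v), and $f_{act}=rf$ is active because $(rf)_*(1_A) = r_*(f_*(1_A)) = r_*(\phi) = 1_B$ by (v). I would present (i)--(iv) tersely, spend the most words on (v), and treat (vi)--(vii) as short corollaries.
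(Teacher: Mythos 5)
Your proposal is correct in substance and follows the paper's overall strategy: deduce each item in order from (m1)--(m4), with (i) feeding (ii), (iii) feeding (v), and (iv), (v) feeding (vii). Once the false starts in your discussion of (i) and (iii) are stripped away, items (i)--(iv), (vi) and (vii) coincide with the paper's arguments (e.g.\ (i) is exactly $\psi\phi=\psi_*(\phi)\psi=\psi\phi\psi$ by (m4) and (m2), then $\psi=1_A$; (iv) is right-cancellation of $f$ in $1_Bf=f_*(1_A)f$). The one place you genuinely diverge is the inertness of the section $i$ in (v). The paper argues directly: by (m4), $i\psi r=i_*(\psi)ir=i_*(\psi)\phi$, which lies in $m_A$ since $m_A$ is a monoid by (i); then (iii), (m3) and activeness of $r$ give $i\psi r=(i\psi r)_*(1_A)=i_*\psi_*r_*(1_A)=i_*(\psi)$. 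You instead use (m3) to get $i_*r_*=\phi_*$ and $r_*i_*=\mathrm{id}$, hence $\phi\,i_*(\theta)=i_*(\theta)$, and from (m4) you get $i\theta r=i_*(\theta)\phi$; to finish you need the right-absorption $i_*(\theta)\phi=i_*(\theta)$, which you correctly flag as the delicate point but leave implicit behind the phrase ``combined with the symmetric computation.'' It does follow, and you should say how: writing $\mu=i_*(\theta)$, the identity $\mu=\phi\mu$ together with the Sch\"utzenberger relation from (i) gives $\mu\phi=(\phi\mu)\phi=\phi\mu\phi=\phi\mu=\mu$. With that one line added your route to (v) is complete; the paper's version is shorter because, by passing through membership of $i\psi r$ in $m_A$ and then applying (iii), it never needs any absorption identity at all.
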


\begin{proof}--

(i) By (m1) and (m2), the moment set $m_A$ is a monoid. (m2) and (m4) imply the relation $\phi\psi\phi=\phi\psi$. Putting $\psi=1_A$ yields $\phi=\phi^2$.

(ii) By (i), $\phi=\psi\phi=\psi\phi\psi=\phi\psi=\psi.$

(iii) By (m2), any moment satisfies $\phi=\phi 1_A=\phi_*(1_A)$. It follows then from (m3) that $f_*(\phi)=f_*(\phi_*(1_A))=(f\phi)_*(1_A)$.

(iv) By (m4), one has $1_Bf=f 1_A=f_*(1_A)f$. Thus if $f$ is epic then $f_*(1_A)=1_B$.

(v) Since $r$ is active by (iv), we get by (iii)$$r_*(\phi)=(r\phi)_*(1_A)=(rir)_*(1_A)=r_*(1_A)=1_{B}.$$For each $\psi\in m_B$, (m4) implies $i_*(\psi)ir=i\psi r$. Since $m_A$ is a monoid by (i), and $ir=\phi$ it follows that $i\psi r\in m_A$. Therefore, by (iii) and (m2), $i\psi r=(i\psi r)_*(1_A)=i_*(\psi)$ so that $i$ is inert. In particular, $i_*(1_B)=ir=\phi$.

(vi) Both classes are closed under composition. Isomorphisms are active by (iv) and inert by (m4) since their pushforward action is the conjugation action.

(vii) Assume first that $f=ig$ with $g$ active and $i$ inert and denote by $r$ a retraction of $i$ such that $ir$ is a moment of $A$. Then we get by (m3) and the definition of active (resp. inert) morphisms that $f_*(1_A)=i_*(g_*(1_A))=ir$. Conversely, if the moment $f_*(1_A)$ splits as $f_*(1_A)=ir$, put $g=rf$. Since by (m4) $f=f1_A=f_*(1_A)f$ we get $f=ig$ where $i$ is inert by (v). Moreover, by (m3) and (v), we get $g_*(1_A)=r_*(f_*(1_A))=r_*(ir)=1$ hence $g$ is active.\end{proof}

\begin{prp}\label{formalsplit}A moment category is the same as a category with moment structure in which all moments split.\end{prp}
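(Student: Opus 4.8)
The plan is to prove Proposition \ref{formalsplit} by exhibiting two mutually inverse passages between the two structures and checking they are well-defined. In one direction, suppose $\CC$ is a moment category in the sense of the first definition. We define a moment structure by letting $m_A$ be the set of moments of $A$ (endomorphisms $\phi$ whose active/inert factorisation $\phi=\phi_{in}\phi_{act}$ satisfies $\phi_{act}\phi_{in}=1_B$), and letting $f_*:m_A\to m_B$ be the pushforward operation constructed in \ref{pushforward}. All four axioms (m1)--(m4) were essentially verified there: (m1) holds since $1_A$ is its own active/inert factorisation; (m4) is exactly the relation $f\phi=f_*(\phi)f$ established as a corollary of the commutativity of the outer square of (\ref{diagram1}); (m3) is the functoriality $(gf)_*=g_*f_*$ noted there; and (m2), $\phi_*(\psi)=\phi\psi$, was observed to follow from the definition of pushforward. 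By construction every moment of $\CC$ splits, so we land in the subclass of moment structures in which all moments split.

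In the other direction, suppose $(\CC,m,(-)_*)$ is a category with moment structure in which all moments split. I would take $\CC_{act}$ and $\CC_{in}$ to be the classes of active and inert morphisms as defined in \ref{formaldfn} ($f$ active iff $f_*(1_A)=1_B$; $f$ inert iff it has a retraction $r$ with $f_*(\phi)=f\phi r$ for all $\phi\in m_A$). By Lemma \ref{formal}(vi) these are closed classes. The key point is that this data forms an active/inert factorisation system: given $f:A\to B$, the moment $f_*(1_A)\in m_B$ splits by hypothesis, so Lemma \ref{formal}(vii) gives a factorisation $f=ig$ with $g$ active and $i$ inert. For essential uniqueness I would argue that active morphisms are left orthogonal to inert ones, or more directly that any two such factorisations differ by a unique isomorphism: if $ig=i'g'$ with $i,i'$ inert (retractions $r,r'$) and $g,g'$ active, then $h=g'r\cdot(\text{restricted appropriately})$ — concretely, using that $i$ splits the moment $f_*(1_A)$ over its domain and similarly for $i'$, both domains are canonically identified via $r'i$ and $ri'$, and one checks $ri'$ is an isomorphism with inverse $r'i$ using (m4) and idempotency. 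Then one verifies axioms (M1) and (M2): (M1) says every inert morphism $i:B\to A$ has a unique active retraction — existence is built into the definition of inert, and for uniqueness, if $r,r'$ are active retractions of $i$, then using $i_*(1_B)=ir$ (Lemma \ref{formal}(v), valid since $i$ is the inert part of a splitting) one gets $r=r'$; (M2) follows from (m3) and (m4) by the same kind of manipulation used in proving Lemma \ref{formal}(vii). Finally one checks that "moment of $A$" in the new sense (an endomorphism $\phi$ with $\phi_{act}\phi_{in}=1$) recovers exactly the set $m_A$: one inclusion is that every $\phi\in m_A$ splits by hypothesis and its splitting gives an active/inert factorisation by Lemma \ref{formal}(v), and the converse uses Lemma \ref{formal}(iii), $\phi\in m_A$ iff $\phi_*(1_A)=\phi$, together with Lemma \ref{formal}(vii).

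The last thing to check is that these two passages are mutually inverse. Starting from a moment category, extracting a moment structure, and re-extracting the factorisation system returns the original $\CC_{act},\CC_{in}$: an $f$ is active in the new sense iff $f_*(1_A)=1_B$ iff (by functoriality of pushforward and the construction in \ref{pushforward}) the inert part of $f$ is an isomorphism iff $f\in\CC_{act}$ originally, and dually for inert; and the new moment set agrees with the old by Lemma \ref{inertsubobject} and the splitting hypothesis. Starting from a moment structure, building the factorisation system, and reading off its pushforward returns the original $(-)_*$ because Lemma \ref{formal}(iii) expresses $f_*(\phi)=(f\phi)_*(1_A)$ purely in terms of the factorisation behaviour, which the pushforward of the reconstructed moment category computes in the same way.

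I expect the main obstacle to be the verification of essential uniqueness of factorisations in the reconstruction direction, i.e. showing that the active morphisms are genuinely left orthogonal to the inert morphisms using only axioms (m1)--(m4) and the splitting hypothesis; the rest is a careful but routine bookkeeping of which morphism is active, which is inert, and why the relevant retractions coincide, all of which Lemma \ref{formal} has already put in place.
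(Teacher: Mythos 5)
Your proposal is correct and follows essentially the same route as the paper's proof: the forward direction invokes the pushforward construction of Section \ref{pushforward} for axioms (m1)--(m4), and the converse direction builds the factorisation system from Lemma \ref{formal}(vi)--(vii), then verifies (M1) via the retraction/absorption properties collected in Lemma \ref{formal} and (M2) via the same (m3)/(m4) manipulation the paper uses. You are somewhat more explicit than the paper on the essential uniqueness of factorisations (exhibiting $r'i$ and $ri'$ as mutually inverse comparison maps) and on the round-trip checks, but the substance is identical.
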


\begin{proof}We have seen that each moment category induces a moment structure by specifying as moments those endomorphisms for which the active part is a retraction of the inert part. Indeed, the factorisation system defines a pushforward operation by diagram (\ref{diagram1}) above, which satisfies the axioms (m1), (m2), (m3), (m4). Note that active (resp. inert) morphisms of the factorisation system are indeed active (resp. inert) in the sense of Definition \ref{formaldfn}, and that by definition all moments split.

Conversely, given a category with moment structure in which all moments split, the active/inert factorisation system derives from Lemma \ref{formal}vi and vii. The active/inert factorisation is essentially unique because inert morphisms have retractions.

It remains to be shown that the axioms (M1) and (M2) of a moment category hold. Assume that an inert morphism $i:B\to A$ has active retractions $r,s:A\to B$ with moments $\phi=ir$ and $\psi=is$. These moments of $A$ are mutually ``right-absorbing'', i.e. $\phi\psi=\psi$ and $\psi\phi=\phi$. Therefore, by Lemma \ref{formal}ii, $\phi=\psi$ and hence $r=s$. Assume finally that $fi=g$ for an inert morphism $i$ and active morphisms $f,g$. Then for the (unique) active retraction $r$ of $i$ we get $gr=fir=f_*(ir)f=f$ where the last equality follows from the hypothesis that $g$ and hence $gr=fir$ are active so that by Lemma \ref{formal}iv, $f_*(ir)=(fir)_*(1)=1$.\end{proof}

\begin{dfn}\label{geometric}Let $\phi,\psi$ be moments of the same object. The moment $\phi$ is said to be a \emph{submoment} of $\psi$ if $\psi\phi=\phi$ in which case we write $\phi\leq\psi$.

Two moments $\phi,\psi$ are said to be \emph{congruent} if $\phi=\phi\psi\phi$ and $\psi=\psi\phi\psi$, in which case we write $\phi\simeq\psi$.

A moment is called \emph{centric} if its congruence class is singleton. A moment structure is called \emph{centric} if all its moments are centric.\end{dfn}

The relation $\leq$ on $m_A$ is reflexive and transitive. By Lemma \ref{formal}ii it is also antisymmetric and defines thus a partial order relation on the local monoid $m_A$.

\begin{lma}\label{Greenorder}Let $A$ be an object of a category with moment structure.

\begin{itemize}\item[(i)]If all moments of $A$ are split, the poset $(m_A,\leq)$ is isomorphic to the poset of inert subobjects of $A$ ordered by inclusion;\item[(ii)]For any $f:A\to B$ and $\phi,\psi\in m_A$ we have $f_*(\phi\psi)=f_*(\phi)f_*(\psi)$. In particular, pushforward $f_*:m_A\to m_B$ is order-preserving;\item[(iii)]Moments $\phi,\psi$ of $A$ are congruent if and only if their active parts $\phi_{act},\psi_{act}$ are isomorphic under $A$.\end{itemize}\end{lma}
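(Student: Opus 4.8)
The plan is to prove the three assertions of Lemma~\ref{Greenorder} in order, leaning on the axioms (m1)--(m4) and on the structural consequences already collected in Lemma~\ref{formal}.

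\emph{Part (i).} First I would observe that, thanks to Lemma~\ref{inertsubobject}, moments of $A$ are already in canonical bijection with inert subobjects of $A$; what remains is to check that this bijection is order-preserving in both directions. Given moments $\phi\leq\psi$ with chosen splittings $\phi=ir$ over $B$ and $\psi=jq$ over $C$, I want an inert morphism $B\rIn C$ over $A$, i.e. a map $k$ with $jk=i$. The natural candidate is $k=qi$. One checks $jk=jqi=\psi i=\psi\phi\,i$; but $\psi\phi=\phi$ (this is exactly $\phi\leq\psi$), so $jk=\phi i=iri=i$. That $k$ is itself inert follows from Lemma~\ref{formal}v applied to the splitting obtained by restricting, together with closure of the inert class under the appropriate factorisation; alternatively, since $j$ is a (split) monomorphism and $i=jk$ is inert, $k$ is inert because inert morphisms are left-cancellable against inert ones via axiom (M2)$'$. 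Conversely, an inclusion of inert subobjects $k:B\rIn C$ over $A$ gives $\phi=ir=jkr$ and then $\psi\phi=jq\cdot jkr=jkr=\phi$, so $\phi\leq\psi$. Hence the bijection of Lemma~\ref{inertsubobject} is a poset isomorphism.

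\emph{Part (ii).} I would derive $f_*(\phi\psi)=f_*(\phi)f_*(\psi)$ purely formally from the axioms. Using Lemma~\ref{formal}iii twice, $f_*(\phi\psi)=(f\phi\psi)_*(1_A)$. Now rewrite $f\phi\psi$ using (m4): $f\phi=f_*(\phi)f$, so $f\phi\psi=f_*(\phi)f\psi=f_*(\phi)f_*(\psi)f$. Applying (m3) and (m2), $(f_*(\phi)f_*(\psi)f)_*(1_A)=\bigl(f_*(\phi)f_*(\psi)\bigr)_*\!\bigl(f_*(1_A)\bigr)$, and since $f_*(\phi)f_*(\psi)\in m_B$ and $f_*(1_A)$ need not be a moment, I instead go the other way: by (m3), $f_*(\phi\psi)=f_*(\phi_*(\psi))=(f\phi)_*(\psi)$, and writing $f\phi=f_*(\phi)f$ gives $(f\phi)_*=(f_*(\phi))_*f_*=f_*(\phi)_*\circ f_*$, where $f_*(\phi)\in m_B$ acts by left multiplication via (m2); thus $f_*(\phi\psi)=f_*(\phi)_*(f_*(\psi))=f_*(\phi)f_*(\psi)$. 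Order-preservation is then immediate: if $\phi\leq\psi$, i.e. $\psi\phi=\phi$, then $f_*(\psi)f_*(\phi)=f_*(\psi\phi)=f_*(\phi)$, so $f_*(\phi)\leq f_*(\psi)$.

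\emph{Part (iii).} Suppose $\phi\simeq\psi$, so $\phi=\phi\psi\phi$ and $\psi=\psi\phi\psi$. Choosing splittings $\phi=\phi_{in}\phi_{act}$ over $B$ and $\psi=\psi_{in}\psi_{act}$ over $C$, I would produce mutually inverse morphisms $B\to C$ under $A$ by setting $u=\psi_{act}\phi_{in}:B\to C$ and $v=\phi_{act}\psi_{in}:C\to B$. Then $vu=\phi_{act}\psi_{in}\psi_{act}\phi_{in}=\phi_{act}\psi\phi_{in}$, and $\phi_{in}\phi_{act}\psi\phi_{in}\phi_{act}=\phi\psi\phi=\phi=\phi_{in}\phi_{act}$; cancelling the split mono $\phi_{in}$ on the left and the split epi $\phi_{act}$ on the right (both of which are permitted here) yields $vu=1_B$, and symmetrically $uv=1_C$. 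Compatibility ``under $A$'' means $\phi_{in}v=\psi_{in}$ and $\psi_{in}u=\phi_{in}$: indeed $\phi_{in}v=\phi_{in}\phi_{act}\psi_{in}=\phi\psi_{in}$ and $\phi\psi_{in}\psi_{act}=\phi\psi=\phi\psi\phi\psi=\psi$ (using $\phi\simeq\psi$, which gives $\phi\psi$ idempotent-like: from $\phi=\phi\psi\phi$ we get $\phi\psi=\phi\psi\phi\psi=\psi$ after the symmetric manipulation), so $\phi\psi_{in}=\psi_{in}$, i.e. $\phi_{in}v=\psi_{in}$; the other identity is symmetric. Moreover $u$ is active, being a composite of actives (Lemma~\ref{formal}v and vi), so $u$ is an isomorphism between $\phi_{act}$ and $\psi_{act}$ over $A$. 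Conversely, if $h$ is an isomorphism under $A$ with $\psi_{act}=h\phi_{act}$ and $\phi_{in}=\psi_{in}h$, then $\psi\phi=\psi_{in}\psi_{act}\phi_{in}\phi_{act}=\psi_{in}\psi_{act}\psi_{in}h\phi_{act}=\psi_{in}h\phi_{act}=\phi_{in}\phi_{act}=\phi$, and symmetrically $\phi\psi=\psi$, which in particular gives $\phi\psi\phi=\psi\phi=\phi$ and likewise for $\psi$, so $\phi\simeq\psi$.

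The main obstacle I anticipate is the bookkeeping in part~(iii): one must be careful about \emph{which} cancellations are legitimate, since $\phi_{act}$ is a split epimorphism and $\phi_{in}$ a split monomorphism, and about getting the direction of the isomorphism and the ``under $A$'' triangles exactly right (the statement says ``isomorphic under $A$'', which I read as the evident comma-category notion on the active parts, equivalently an isomorphism of inert parts fixing $A$). Parts (i) and (ii) are essentially formal manipulations with the axioms together with the already-proved items of Lemma~\ref{formal}, so they should go through without surprises.
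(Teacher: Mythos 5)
Parts (i) and (ii) are fine: your computation in (ii) is the paper's, step for step, and your direct construction $k=qi$ in (i) is a legitimate shortcut past the paper's route (which splits the moment $s_*(\phi)$ of $C$ instead). The one loose end there is the inertness of $k$: axiom (M2)$'$ belongs to a moment \emph{category}, not to a bare moment structure, so you should check inertness from Definition \ref{formaldfn} directly — the retraction $rj$ works, and cancelling the split mono $j$ in $jk_*(\chi)q=jk\chi r$ gives $k_*(\chi)=k\chi rj$.

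Part (iii) contains a genuine error. Via the Sch\"utzenberger relation $\phi\psi\phi=\phi\psi$, congruence is equivalent to $\phi\psi=\phi$ \emph{and} $\psi\phi=\psi$, not to $\phi\psi=\psi$: your manipulation ``$\phi\psi=\phi\psi\phi\psi=\psi$'' is false, since $\phi\psi\phi\psi=\phi(\psi\phi\psi)=\phi\psi$, and $\phi\psi=\psi$ together with $\phi\psi=\phi$ would force $\phi=\psi$. Consequently the triangles you verify, $\phi_{in}v=\psi_{in}$ and $\psi_{in}u=\phi_{in}$, are false for genuinely distinct congruent moments: take $\phi=(0,0)$ and $\psi=(1,1)$ in $m_{[1]}$ for $\Delta$ — they are congruent and split over $[0]$, $v=1_{[0]}$, yet $\phi_{in}\neq\psi_{in}$. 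The phrase ``isomorphic under $A$'' means only the coslice condition $u\phi_{act}=\psi_{act}$; it is \emph{not} equivalent to an isomorphism of the inert parts over $A$, and imposing both $\psi_{act}=h\phi_{act}$ and $\phi_{in}=\psi_{in}h$ yields $\phi=\phi_{in}\phi_{act}=\psi_{in}h\phi_{act}=\psi_{in}\psi_{act}=\psi$, so your converse direction only treats the trivial case $\phi=\psi$ and does not establish the ``if'' half of the equivalence. The repair is short and is what the paper does: from $\phi=\phi\psi$ and $\psi=\psi\phi$, cancel the split monos $\phi_{in},\psi_{in}$ to obtain $\phi_{act}\psi_{in}\psi_{act}=\phi_{act}$ and $\psi_{act}\phi_{in}\phi_{act}=\psi_{act}$; these identities say precisely that your $u=\psi_{act}\phi_{in}$ and $v=\phi_{act}\psi_{in}$ are mutually inverse and satisfy the correct triangles $u\phi_{act}=\psi_{act}$ and $v\psi_{act}=\phi_{act}$. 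Conversely, the single hypothesis $\rho\psi_{act}=\phi_{act}$ already gives $\phi\psi=\phi_{in}\rho\psi_{act}\psi_{in}\psi_{act}=\phi_{in}\rho\psi_{act}=\phi$, and dually $\sigma\phi_{act}=\psi_{act}$ gives $\psi\phi=\psi$, whence congruence.
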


\begin{proof}(i) Consider split moments $\phi=ir$ and $\psi=js$. If $i=jj'$ then $\phi=ir=jj'r$ and hence $\psi\phi=\phi$. Conversely, if $\psi\phi=\phi$ then by (m2) and (m3) $j_*s_*(\phi)=\phi$ and hence, by Lemma \ref{formal}(v), $js_*(\phi)s=\phi$. Splitting the moment $s_*(\phi)$ as $j's'$ we get $jj's's=ir$ and, since the last identity represents two splittings of the same moment, we can assume without loss of generality that $r=s's$ and $i=jj'$.

(ii) The second assertion follows from the first and the definition of the partial orders. For the first observe that $$f_*(\phi\psi)\overset{(\ref{formal}iii)}{=}(f\phi)_*(\psi)\overset{(m4)}{=}(f_*(\phi)f)_*(\psi)\overset{(m3)}{=}f_*(\phi)_*(f_*(\psi))\overset{(m2)}{=}f_*(\phi)f_*(\psi).$$

(iii) If $\phi=\phi\psi\phi=\phi\psi$ and $\psi=\psi\phi\psi=\psi\phi$ then $\phi_{act}\psi_{in}\psi_{act}=\phi_{act}$ and $\psi_{act}\phi_{in}\phi_{act}=\psi_{act}$. This implies that $\phi_{act}\psi_{in}$ and $\psi_{act}\phi_{in}$ induce inverse isomorphisms between $\psi_{act}$ and $\phi_{act}$.

Conversely, if $\rho\psi_{act}=\phi_{act}$ then $\phi\psi=\phi_{in}\phi_{act}\psi_{in}\psi_{act}=\phi_{in}\rho\psi_{act}\psi_{in}\psi_{act}=\phi$; dually, if $\sg\phi_{act}=\psi_{act}$ then $\psi\phi=\psi$.\end{proof}

\begin{rmk}\label{LRB}According to Lemma \ref{formal}i the moments of any object of a moment category form a submonoid of the endomorphism monoid and fulfill the \emph{Sch\"utzenberger relation} $\phi\psi\phi=\phi\psi$. Such monoids are known in semigroup literature as \emph{left regular bands}, cf. \cite{MSS}. More precisely, a \emph{band} is a semigroup consisting of idempotent elements, and a band is said to be \emph{left regular} if the Sch\"utzenberger relation holds. In contrast to the semigroup literature, we shall always assume that a left regular band has a neutral element, i.e. is a monoid. In the presence of a neutral element, the idempotency of the elements follows from the Sch\"utzenberger relation. A \emph{morphism of left regular bands} is required to preserve the multiplicative structure, but not necessarily the neutral element.  This is important for us because pushforward is a morphism of left regular bands by Lemma \ref{Greenorder}ii, but \emph{not} in general a morphism of monoids. Pushforward $f_*:m_A\to m_B$ preserves the neutral element if and only if $f:A\to B$ is \emph{active}, cf. Definition \ref{formaldfn} and Proposition \ref{formalsplit}.

In a category with moment structure each object has thus a \emph{local monoid} $m_A$ of moments which is a left regular band. The partial order relation on $m_A$ (defined in Definition \ref{geometric}) is known in literature as Green's $\Ll$-relation. The congruence relation is known as Green's $\Dd$-relation, and it is well-known that for any left regular band the quotient by the $\Dd$-relation defines its \emph{universal commutative quotient}. In particular, the local monoid $m_A$ is commutative if and only if its congruence relation is discrete, i.e. all moments of $A$ are centric, cf. Definition \ref{geometric}.

As we will see in Propositions \ref{centric} and \ref{Burnside} below, this generalises to moment categories: a moment category satisfies centricity axiom (MC) if and only if all its moments are centric if and only if moments of the same object commute. Moreover, every moment category admits a universal centric quotient.

In general, the quotient $m_A/\!\simeq$ by the congruence relation is thus a commutative band. Commutative bands are also known as meet-semilattices if multiplication is viewed as meet operation. The congruence class of $1_A$ serves as top element for the partial order. If the local monoid $m_A$ is finite (which will always be the case for us) we get a finite meet-semilattice $m_A/\!\simeq$ with top element $1$, and again it is well-known that one can define a join operation $x\vee y$ by taking the meet of all $z$ such that $x\leq z$ and $y\leq z$. We get in this way a \emph{lattice} with bottom element $0$. Since the congruence class of $1_A$ is singleton, we get a lattice with $0\not=1$ as soon as $A$ has non-identity moments.\end{rmk}

We are grateful to Steve Lack for having pointed out to us the following result which is an important tool for constructing moment categories.

\begin{prp}\label{completion}Every category $\CC$ with moment structure admits an idempotent completion into a moment category $\overline{\CC}$ whose objects are the moments of $\CC$. Congruent moments in $\CC$ give rise to isomorphic objects in $\overline{\CC}$.\end{prp}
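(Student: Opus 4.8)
The plan is to use the classical idempotent (Karoubi) completion as the underlying category of $\overline{\CC}$ and then transport the moment structure along the canonical inclusion $\CC \inc \overline{\CC}$. Concretely, objects of $\overline{\CC}$ are pairs $(A,\phi)$ with $A$ an object of $\CC$ and $\phi \in m_A$ (using Lemma \ref{formal}i, every moment is idempotent, so this is a full subcategory of the usual Karoubi envelope of $\CC$); a morphism $(A,\phi)\to(B,\psi)$ is a morphism $g\colon A\to B$ in $\CC$ with $\psi g \phi = g$, and composition is inherited from $\CC$. The identity of $(A,\phi)$ is $\phi$ itself. I would first record that $\CC \inc \overline{\CC}$, $A \mapsto (A,1_A)$, is a fully faithful functor, and that every object $(A,\phi)$ is a retract of $(A,1_A)$ via $\phi$ viewed both as $(A,1_A)\to(A,\phi)$ and $(A,\phi)\to(A,1_A)$; in particular all idempotents in $\overline{\CC}$ split, since $\overline{\CC}$ is (a full subcategory closed under the relevant retracts of) the Karoubi envelope.

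Next I would equip $\overline{\CC}$ with a moment structure in the sense of Definition \ref{formaldfn} and verify axioms (m1)--(m4). For an object $(A,\phi)$, declare its moment set to be $m_{(A,\phi)} = \{\, \chi \in m_A : \chi \leq \phi \,\}$, i.e. the submoments of $\phi$ in the local monoid $m_A$; equivalently $\chi$ with $\phi\chi=\chi=\chi\phi$ — note each such $\chi$ is indeed an endomorphism of $(A,\phi)$ in $\overline{\CC}$. For a morphism $g\colon(A,\phi)\to(B,\psi)$ define the pushforward by $g_*(\chi) = (g\chi)_*(1_A) = \psi\, g_*^{\CC}(\chi)$ — using the formula from Lemma \ref{formal}iii in $\CC$ and then corestricting; one checks $g_*(\chi)$ lies in $m_{(B,\psi)}$ because $g_*^{\CC}(\chi) \leq g_*^{\CC}(1_A)$ by Lemma \ref{Greenorder}ii, and $g_*^{\CC}(1_A) = \psi$ since $g$ regarded in $\CC$ already satisfies $\psi g\phi = g$, whence $g_*^{\CC}(1_A) = (g)_*(1_A) = (\psi g \phi)_*(1_A)$ simplifies (via (m3), (m2), Lemma \ref{formal}iii) to a submoment of $\psi$ which in fact equals $\psi$ after unwinding — this is the small computation I would carry out carefully. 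Axiom (m1) holds since $\phi = 1_{(A,\phi)} \in m_{(A,\phi)}$. Axiom (m2) is $\chi_*(\chi') = \chi\chi'$ for $\chi,\chi' \leq \phi$, which reduces to (m2) in $\CC$ together with the fact that $\chi\chi' \leq \phi$. Axiom (m3) follows from functoriality of pushforward in $\CC$ (Lemma \ref{formal}iii again) plus the idempotency $\psi g\phi = g$. Axiom (m4), $g\chi = g_*(\chi)g$, is immediate from (m4) in $\CC$ after checking the corestriction is harmless.

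Then, by Proposition \ref{formalsplit}, to conclude that $\overline{\CC}$ is a moment category it suffices to check that every moment of $\overline{\CC}$ splits; but a moment $\chi \leq \phi$ of $(A,\phi)$ splits in $\CC$ as $\chi = ir$ with $i\colon B \lra A : r$, $ri = 1_B$, and this same $i,r$ exhibit a splitting $(B,1_B) \lra (A,\phi)$ in $\overline{\CC}$ — the needed commutations $\phi i = i$, $r\phi = r$ follow from $\chi \leq \phi$. Finally, for the last sentence of the statement: if $\chi \simeq \chi'$ in $\CC$ (say both are moments of $A$), then by Lemma \ref{Greenorder}iii their active parts are isomorphic under $A$, and splitting them produces an isomorphism between the splitting objects; translated into $\overline{\CC}$ this says the objects $(A,\chi)$ and $(A,\chi')$ are isomorphic. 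I expect the main obstacle to be purely bookkeeping: pinning down the moment set and pushforward on $\overline{\CC}$ so that the corestrictions $\psi(-)$ interact correctly with (m3) — i.e. verifying $(hg)_* = h_* g_*$ when the intermediate object carries a nontrivial idempotent — and making sure that the ``$\leq \phi$'' constraint is preserved by every operation. No single step is deep; the care is in not conflating pushforward in $\CC$ with pushforward in $\overline{\CC}$.
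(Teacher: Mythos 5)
Your setup coincides with the paper's: objects $(A,\phi)$ with $\phi\in m_A$, moment sets $m_{(A,\phi)}=\{\chi\in m_A\mid\chi\leq\phi\}$, pushforward inherited from $\CC$, and an appeal to Proposition \ref{formalsplit} at the end. The verification of (m1)--(m4) is fine; in fact the corestriction you worry about is vacuous, since $g=\psi g\phi$ gives $g_*(\chi)=\psi_*(g_*(\phi_*(\chi)))=\psi\, g_*(\chi)$ by (m3) and (m2), so the pushforward of $\CC$ already lands in $m_{(B,\psi)}$ (note that only $g_*(1_A)\leq\psi$ holds, not $g_*(1_A)=\psi$ as you assert, but the inequality is all you need). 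The genuine gap is in the two concluding steps. To show that a moment $\chi\leq\phi$ of $(A,\phi)$ splits in $\overline{\CC}$ you write that ``$\chi$ splits in $\CC$ as $\chi=ir$'' --- but a category with moment structure is precisely one in which moments need \emph{not} split; by Proposition \ref{formalsplit} the split case is already a moment category, so your argument is circular exactly where the proposition has content. The correct splitting is over the \emph{new} object $(A,\chi)$: the morphism $\chi$ itself, viewed both as $(A,\phi)\to(A,\chi)$ and as $(A,\chi)\to(A,\phi)$, is a section/retraction pair, since $\chi\chi=\chi$ is the given moment on $(A,\phi)$ and is the identity $1_{(A,\chi)}$ on $(A,\chi)$. (Your earlier remark that all idempotents of $\overline{\CC}$ split because it sits in the Karoubi envelope points in the right direction but is itself an overclaim: $\overline{\CC}$ is only the full subcategory on moments, so a general idempotent of $(A,\phi)$ has no splitting object available; what saves you is that every \emph{moment} $\chi$ of $(A,\phi)$ is a moment of $A$, so $(A,\chi)$ does exist in $\overline{\CC}$.)

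The same circularity affects your proof of the last sentence: Lemma \ref{Greenorder}iii concerns the active parts $\phi_{act},\psi_{act}$ of moments, which exist only for \emph{split} moments, so it cannot be invoked for congruent moments of a general category with moment structure. The paper's argument is purely equational and works in your category as well: for congruent $\chi\simeq\chi'$ in $m_A$, the morphisms $\chi'\chi\colon(A,\chi)\to(A,\chi')$ and $\chi\chi'\colon(A,\chi')\to(A,\chi)$ are legitimate morphisms of $\overline{\CC}$ and are mutually inverse because $\chi\chi'\chi=\chi=1_{(A,\chi)}$ and $\chi'\chi\chi'=\chi'=1_{(A,\chi')}$. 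A further, harmless, divergence: the paper takes the larger hom-sets $\overline{\CC}(\phi,\psi)=\{f\mid f_*(\phi)\leq\psi\}$ rather than your Karoubi-style $\{g\mid\psi g\phi=g\}$; both are workable, and the repaired arguments above go through verbatim in your version.
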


\begin{proof}By definition, the objects of $\overline{\CC}$ are the moments of $\CC$, and for any pair of moments $(\phi,\psi)\in m_A\times m_B$, the morphism set is defined by$$\overline{\CC}(\phi,\psi)=\{f\in\CC(A,B)\,|\,f_*(\phi)\leq\psi\}.$$The identity of $\overline{\CC}(\phi,\phi)$ is $\phi$, where composition in $\overline{\CC}$ is defined like in $\CC$. If $f_*(\phi)\leq \psi$ and $g_*(\psi)\leq\zeta$ then $(gf)_*(\phi)\leq\zeta$ so that we get indeed a category in this way.

We now apply Proposition \ref{formalsplit} in order to show that $\overline{\CC}$ is a moment category where we define the moment set $m_\phi$ of an object $\phi\in m_A$ by$$m_\phi=\{\psi\in m_A\,|\,\psi\leq\phi\}$$so that $\CC$ fully embeds into $\overline{\CC}$ under preservation of the moment structures.

The pushforward operation $f_*:m_\phi\to m_\psi$ for any $f:\phi\to\psi$ in $\overline{\CC}$ is defined by restricting the pushforward operation $f_*:m_A\to m_B$ of $\CC$ because it follows from $f_*(\phi)\leq\psi$ that $f_*$ takes a moment in $m_\phi$ to a moment in $m_\psi$. These restricted pushforward operations fulfill the axioms (m1)-(m4) of a moment structure on $\overline{\CC}$.

It remains to be shown that the moments of $\overline{\CC}$ are split. Given $\psi\in m_\phi$, the moment $\psi$ splits over the object $\psi\phi$ in $\overline{\CC}$. Indeed, $\psi:\phi\to\psi\phi$ is a retraction with section $\psi:\psi\phi\to\phi$ in $\overline{\CC}$ whose associated moment is $\psi:\phi\to\phi$.

For the second assertion observe that $\psi\phi:\phi\to\psi$ and $\phi\psi:\psi\to\phi$ are mutually inverse morphisms in $\overline{\CC}$ if and only if $\phi$ and $\psi$ are congruent moments in $\CC$.\end{proof}

\begin{exms}\label{examples}The following examples of moment categories are all locally finite with a countable or finite set of objects. These categories play important roles in algebraic topology or algebraic combinatorics. It is somehow surprising that they share the feature of carrying a moment structure.\vspace{1ex}

(a) \emph{The category $\Gamma$ of Segal}\vspace{1ex}

Segal's category $\Gamma$ \cite{Se} is the category of finite sets $\nn=\{1,\dots,n\}$ (where $\underline{0}$ denotes the empty set) with operators $\mm\to\nn$ given by ordered $m$-tuples of pairwise disjoint subsets of $\nn$. Composition is defined by\begin{diagram}(\kk&\rTo^{(M_1,\dots,M_k)}&\mm&\rTo^{(N_1,\dots,N_m)}&\nn)=(\kk&\rTo^{(\bigcup_{j_1\in M_1}N_{j_1},\dots,\bigcup_{j_k\in M_k}N_{j_k})}&\nn).\end{diagram}
We now define the following active/inert factorisation system on $\Gamma$:

an operator $f=(N_1,\dots,N_m):\mm\to\nn$ is \emph{active} if $\nn=\bigsqcup\limits_{i=1}^m N_i$.

an operator $f=(N_1,\dots,N_m):\mm\to\nn$ is \emph{inert} if each $N_i$ is singleton.\vspace{1ex}

\noindent In particular, inert morphisms correspond to injections $\mm\to\nn$ and active morphisms can be considered as partitions of the target indexed by the elements of the source. It is now straightforward to check that this defines a factorisation system on $\Gamma$ fulfilling the axioms (M1), (M2), (MC) of a centric moment category. For instance, axiom (MC) holds since active morphisms have sections if and only if the associated partition only contains singletons and empty subsets of the target.

It is well-known that the dual category $\Gamma^\op$ may be identified with a skeleton of the category of finite based sets and base-point preserving maps. Dualising the active/inert factorisation system on $\Gamma$ induces an inert/active factorisation system on $\Gamma^\op$ which has extensively been used by Lurie \cite{Lu} in his theory of $\infty$-operads. We borrowed our terminology from him, adding just the extra-term ``moment''.

The dual $\Gamma^\op$ can also be identified with a skeleton of the category of finite sets and \emph{partial} maps between them. In this setting, the inert/active factorisation of a partial map is its factoriation into a partial identity followed by a total map. The inclusion of the active part $\Gamma_{act}^\op$ into $\Gamma^\op$ can be interpreted as the inclusion of the category of finite sets and \emph{total} maps into the category of finite sets and \emph{partial} maps. This interpretation illustrates well the dual of Proposition \ref{corestriction}.

We will see that the moment category $\Gamma$ plays an important universal role insofar as every \emph{locally finite, unital} moment category $\CC$ comes equipped with an essentially unique augmentation $\gamma_\CC:\CC\to\Gamma.$ This augmentation takes an object of $\CC$ to the set of its elementary moments and views the morphisms of $\CC$ as ``partial partitions'' of their target, cf. Proposition \ref{augmentation}.\vspace{1ex}

(b) \emph{The simplex category $\Delta$}\vspace{1ex}

The \emph{simplex category} $\Delta$ is the category of finite non-empty ordinals $[m]=\{0,\dots,m\},\,m\geq 0,$ and order-preserving maps. A simplicial operator $f:[m]\to[n]$ is \emph{active} if it is \emph{endpoint-preserving}, i.e. $f(0)=0$ and $f(m)=n$, and \emph{inert} if it is \emph{distance-preserving}, i.e. $f(i+1)=f(i)+1$ for $i=0,\dots,m-1$.

It is straightforward to check that the axioms (M1), (M2) of a moment category are satisfied. Axiom (MC) does not hold: any map $[n]\to[0]$ is active but has several inert sections whenever $n>0$. In general, an active map $f:[n]\rAct\, [m]$ has an inert section $g:[m]\rIn\, [n]$ if and only if there is a subinterval $[s,s+m]\subset[0,n]$ such that $f(0)=f(s)$ and $f(s+m)=f(n)$ while $f$ restricted to $[s,s+m]$ is injective. If $m>0$, then the section $g$ is uniquely determined by $f$. Therefore, by Lemma \ref{centricmoment} below, a moment of $[n]\not=[0]$ is centric if and only if it splits over $[m]\not=[0]$. The monoid structure of the moments of $[n]$ can be interpreted geometrically in terms of the corresponding inert subobjects, i.e. subintervals of $[n]$. The product of two moments is commutative if the two corresponding inert subobjects have an intersection in $\Delta$ in which case the product represents this intersection. Otherwise, the product is non-commutative. For instance, for the moments $\phi=(0,1,1,1)$ and $\psi=(2,2,2,3)$ of $[3]$ we get $\phi\psi=(1,1,1,1)\not=(2,2,2,2)=\psi\phi$.

Note that $\Delta$ also admits the familiar epi/mono factorisation system where the epimorphisms are called degeneracy operators and the monomorphisms face operators. Every degeneracy operator is retractive (cf. Lemma \ref{formal}iv) while active (resp. inert) face operators are usually called \emph{inner} (resp. \emph{outer}) face operators.\vspace{1ex}

(c) \emph{The categories $\Theta_n$ of Joyal}\vspace{1ex}

The categories $\Theta_n$ of Joyal \cite{J} have several equivalent definitions. For $n=1$ we recover example (b) since $\Theta_1=\Delta$. For general $n>0$, the objects of $\Theta_n$ are $n$-level trees. The maps can be described by first taking an $n$-level tree $T$ to an $n$-graph $T_*$, and then applying the free (strict) $n$-category functor $F_n$ from $n$-graphs to $n$-categories. This leads to the following definition (cf. \cite{Be})$$\Theta_n(S,T)=\nCat(F_n(S_*),F_n(T_*)).$$ An alternative way of defining $\Theta_n$ is as an iterated wreath product of $\Delta$ (cf. \cite{Be2}). We will see below (cf. Proposition \ref{unitalwreath}) that for any two unital moment categories $\CC,\,\DD$ there is a well-defined wreath product $\CC\wr\DD$ which is again a unital moment category. Since $\Delta$ is a unital moment category, this permits to define iterated wreath products of $\Delta$, and it turns out that $\Theta_n$ is an $n$-fold wreath product $\Delta\wr\cdots\wr\Delta$ yielding the moment structure of $\Theta_n$ for free.

The active (resp. inert) morphisms of $\Theta_n(S,T)$ have a geometric interpretation in terms of the $n$-level tree structures of $S$ and $T$. The active morphisms correspond to tree-partitions of $T$ labelled by $S$. The inert morphisms correspond to immersions of $S$ as a plain subtree of $T$. The inert morphisms are those belonging to the image of the globular maps $S_*\to T_*$ under the free functor $F_n$. The existence of this active/inert factorisation system of $\Theta_n$ has been established in \cite[Lemma 1.11]{Be}.

Note that $\Theta_n$ also admits the familiar epi/mono factorisation system where the epimorphisms are called degeneracy operators and the monomorphisms face operators. Every degeneracy operator is retractive while the moment structure induces a natural notion of inner (i.e. active) and outer (i.e. inert) face operator in $\Theta_n$.\vspace{1ex}

(d) \emph{Idempotent completion of a left regular band}\label{lrb}\vspace{1ex}

We refer the reader to Margolis-Saliola-Steinberg \cite[Section 2]{MSS} for more details concerning the theory of left regular bands. Each left regular band $L$ (cf. Remark \ref{LRB}) can be considered as a one-object category $\CC_L$ with endomorphism monoid the given left regular band $L$. If we define the unique moment set also to be equal to $L$, and the pushforward operation to be left translation, then all axioms of a moment structure are satisfied, axiom (m4) being the Sch\"utzenberger relation. Proposition \ref{completion} applies and $\CC_L$ fully embeds into a moment category $\overline{\CC}_L$, whose objects are the elements of $L$. Moreover, elements in $L$ are \emph{congruent} if and only if they are \emph{isomorphic} as objects of $\overline{\CC}_L$. Therefore, a \emph{skeleton} of $\overline{\CC}_L$ is itself a moment category $\widehat{\CC}_L$ whose objects are the elements of the quotient $L/\!\simeq$.

These moment categories $\overline{\CC}_L$ constructed out of left regular bands $L$ have very special properties: each active morphism is retractive, and the (retr)active part of $\overline{\CC}_L$ is opposite to the poset underlying $L$. The inert subobjects of an object of $\overline{\CC}_L$ are in bijection with the subobjects of the corresponding element of $L$.

The most prominent example of a left regular band is the \emph{face monoid} $\Ll_\Aa$ of a \emph{hyperplane arrangement} $\Aa$ in Euclidean space, cf. \cite{MSS}. In this case, the quotient $\Ll_\Aa/\!\simeq$ by the congruence relation is the so-called \emph{intersection lattice} $\Ii_\Aa$ of $\Aa$. The elements of the face monoid (the ``facets'') can be identified with non-empty intersections of the half-spaces delimited by the hyperplanes of $\Aa$. The product $z=xy$ in $\Ll_\Aa$ of two facets is the first facet $z$ crossed by a segment joining an interior point of $x$ to an interior point of $y$. Two facets are congruent if and only if they generate the same linear support in the intersection lattice $\Ii_\Aa$.

As illustration, let us consider the Coxeter arrangement $\Aa_{\Sg_3}$ of the symmetric group $\Sg_3$ on three letters. This is the so-called \emph{braid arrangement} on three strands. The facets of the face monoid $\Ll_{\Aa_{\Sg_3}}$ are in bijection with left cosets of the standard Coxeter subgroups of $\Sg_3$ (namely $\Sg_1\times\Sg_1\times\Sg_1,\Sg_1\times\Sg_2,\Sg_2\times\Sg_1$ and $\Sg_3$) where the underlying poset of the left regular band $\Ll_{\Aa_{\Sg_3}}$ is reverse inclusion of left cosets. The objects of the intersection lattice $\Ii_{\Aa_{\Sg_3}}$ are conjugacy classes of these standard Coxeter subgroups in $\Sg_3$. This yields the following inert part of $\widehat{\CC}_{\Ll_{\Aa_{\Sg_3}}}$

$$\xymatrix{&\Sg_3\ar[dr]\ar@<1ex>[dr]\ar@<2ex>[dr]\ar[d]\ar@<-1ex>[d]\ar@<1ex>[d]&\\
\langle(2\,3)\rangle\ar@{<-}[ru]\ar@<1ex>@{<-}[ru]\ar@<2ex>@{<-}[ru]&\langle(1\,3)\rangle\ar@<-0.5ex>[d]\ar@<0.5ex>[d]&\langle(1\,2)\rangle\\
&\langle(\,)\rangle\ar@<1ex>@{<-}[lu]\ar@{<-}[lu]\ar@{<-}[ru]\ar@<1ex>@{<-}[ru]&}$$

\noindent where the inert arrows correspond to left cosets (of the target inside the source) with obvious composition law. This inert part of  $\widehat{\CC}_{\Ll_{\Aa_{\Sg_3}}}$ contains three subcategories whose topological realisations are classifying spaces for the \emph{braid group} $B_3$ on three strands, cf. \cite[Remark 2.7]{Be0}.

\end{exms}

The following proposition was suggested to us by Richard Garner. It shows that the category $\LRB$ of left regular bands (with neutral element) and morphisms of left regular bands is an example of a \emph{large} moment category.

\begin{prp}\label{large}The category $\LRB$ is a moment category with active (resp. inert) morphisms the monoid morphisms (resp. the order-ideal inclusions).

Each moment category $\CC$ is endowed with a moment functor $m_\CC:\CC\to\LRB$ taking objects to their local moment monoids, and morphisms to the induced pushforward operation.\end{prp}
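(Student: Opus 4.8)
The plan is to deduce both assertions from the formal apparatus developed above — chiefly Definition \ref{formaldfn}, Lemma \ref{formal}, Lemma \ref{Greenorder} and Proposition \ref{formalsplit}. For the first assertion I would exhibit an explicit moment structure on $\LRB$. To a left regular band $L$ assign the moment set $m_L=\{\lambda_e\mid e\in L\}$, where $\lambda_e\colon L\to L$ is left translation $x\mapsto ex$; each $\lambda_e$ is a morphism of left regular bands since $exe=ex$ (Sch\"utzenberger relation), one has $\lambda_e\lambda_f=\lambda_{ef}$, and $e\mapsto\lambda_e$ is a bijection $L\xrightarrow{\sim}m_L$ with inverse $\lambda\mapsto\lambda(1_L)$, so $m_L$ is a submonoid of $\End(L)$ isomorphic to $L$. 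Under this identification put $f_*(\lambda_e)=\lambda_{f(e)}$ for $f\colon L\to L'$; then axioms (m1)--(m4) amount to the multiplicativity of $f$ and the rule $\lambda_e\lambda_f=\lambda_{ef}$, all routine. Finally each moment $\lambda_e$ splits over the principal order ideal $eL=\{x\in L\mid x\le e\}$, which is again a left regular band with neutral element $e$, the splitting being the inclusion $eL\hookrightarrow L$ together with the retraction $x\mapsto ex$. Proposition \ref{formalsplit} then makes $\LRB$ a moment category.

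It remains to recognise the two classes. By Definition \ref{formaldfn} a morphism $f$ is active iff $f_*(1_L)=1_{L'}$, i.e. (under $m_L\cong L$) iff $f(1_L)=1_{L'}$, which is precisely the condition that $f$ preserve neutral elements, i.e. be a morphism of monoids. On the other hand, by Lemma \ref{inertsubobject} the inert subobjects of $L$ correspond to its moments, hence to the principal order ideals $eL$; conversely any order ideal $I\subseteq L$ which is an object of $\LRB$ has a neutral element $e$, so $ex=x$ for all $x\in I$, giving $I\subseteq eL$, while $eL\subseteq I$ by downward closure, hence $I=eL$. Thus, up to isomorphism, the inert morphisms are exactly the order-ideal inclusions.

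For the functor $m_\CC\colon\CC\to\LRB$, send an object $A$ to $m_A$, which by Lemma \ref{formal}(i) and Remark \ref{LRB} is a left regular band with neutral element $1_A$, and send a morphism $f\colon A\to B$ to its pushforward $f_*\colon m_A\to m_B$, a morphism of left regular bands by Lemma \ref{Greenorder}(ii) (preserving neutral elements exactly when $f$ is active). Functoriality is axiom (m3) together with $(1_A)_*=\mathrm{id}_{m_A}$, a special case of (m2). Preservation of active morphisms is then immediate: if $f$ is active, $f_*(1_A)=1_B$, so $m_\CC(f)$ is a morphism of monoids, hence active in $\LRB$.

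The only step requiring real work — and the main obstacle — is that $m_\CC$ preserves inert morphisms. Let $i\colon B\to A$ be inert with unique active retraction $r\colon A\to B$ (axiom (M1)) and associated moment $\phi=ir\in m_A$. I claim $i_*\colon m_B\to m_A$ factors as an isomorphism of left regular bands $m_B\xrightarrow{\sim}m_\phi:=\{\chi\in m_A\mid\phi\chi=\chi\}$ followed by the order-ideal inclusion $m_\phi\hookrightarrow m_A$, hence is inert as a composite of inert maps of $\LRB$. Injectivity of $i_*$ is clear from $r_*i_*=(ri)_*=(1_B)_*=\mathrm{id}_{m_B}$. The delicate point is $\mathrm{im}(i_*)=m_\phi$: for ``$\subseteq$'' use the identity $i_*(\psi)=i\psi r$ (as in the proof of Lemma \ref{formal}(v)) to get $\phi\cdot i\psi r=i(ri)\psi r=i\psi r$; for ``$\supseteq$'' observe that in the left regular band $m_A$ the relation $\phi\chi=\chi$ forces $\chi\phi=\chi$ (since $\chi\phi\chi=\chi\phi$ by Sch\"utzenberger while $\chi\phi\chi=\chi(\phi\chi)=\chi\chi=\chi$), whence $\chi=\phi\chi\phi=i(r\chi i)r=i_*(r\chi i)$ with $r\chi i=r_*(\chi)\in m_B$ by (m4). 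Finally the restriction of $r_*$ to $m_\phi$ is a two-sided inverse of $i_*$ and is multiplicative, so $i_*\colon m_B\xrightarrow{\sim}m_\phi$ is an isomorphism of left regular bands, completing the argument.
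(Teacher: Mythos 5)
Your proof is correct, but it reaches the first assertion by a genuinely different route than the paper. You equip $\LRB$ with an explicit moment structure in the sense of Definition \ref{formaldfn} --- the moments of $L$ are the left translations $\lambda_e$, pushforward is $f_*(\lambda_e)=\lambda_{f(e)}$, and each $\lambda_e$ splits over the principal order ideal $eL$ --- and then invoke Proposition \ref{formalsplit} to obtain the moment category structure, reading off the active and inert classes afterwards. The paper instead constructs the active/inert factorisation directly: $f=f_{in}f_{act}$ with $f_{act}\colon M\to f(1)N$ a monoid morphism and $f_{in}\colon f(1)N\to N$ the order-ideal inclusion, with essential uniqueness and (M1) coming from the uniqueness of the monoid retraction $N\to xN$, $y\mapsto xy$, and (M2) from the observation that a monoid morphism which restricts to a monoid morphism on the order ideal $xM$ must send $x$ to $1$. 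Both arguments turn on the same consequence $x(yz)=(xy)(xz)$ of the Sch\"utzenberger relation, so the difference is one of packaging: your version gets the factorisation system for free from the general machinery of Section 1.2 at the cost of first checking (m1)--(m4) and the splittings, while the paper's is shorter but verifies the moment-category axioms by hand. For the second assertion the paper is very terse (it only cites Lemma \ref{formal}i, Lemma \ref{Greenorder}ii, (m1) and (m3)), and your explicit verification that $m_\CC$ preserves inert morphisms --- showing via $i_*(\psi)=i\psi r$ and $r_*(\chi)=r\chi i$ that $i_*$ is an isomorphism of left regular bands onto the principal order ideal $m_\phi=\{\chi\in m_A\mid\phi\chi=\chi\}$ for $\phi=ir$ --- is a correct and welcome elaboration of a point the paper leaves implicit.
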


\begin{proof}Each morphism of left regular bands $f:M\to N$ factors as a monoid morphism $f_{act}:M\to f(1)N$ followed by an order-ideal inclusion $f_{in}:f(1)N\to N$. This factorisation is essentially unique, because any order-ideal inclusion $xN\to N$ admits a monoid retraction $N\to xN$ given by left translation by $x$; observe that the Sch\"utzenberger relation implies $x(yz)=(xy)(xz)$, i.e. left translation preserves the multiplicative structure. The uniqueness of this retraction yields axiom (M1) of a moment category. For (M2) observe that if a monoid morphism $f:M\to N$ remains a monoid morphism when restricted to the order-ideal $i:xM\to M$ then $f(x)=1$, therefore $fir=f$ where $r:M\to xM$ is left translation by $x$.

The second assertion follows from Lemmas \ref{formal}i and \ref{Greenorder}ii and (m1), (m3).\end{proof}

\subsection{Centricity}We now discuss in more detail centricity axiom (MC) and its relationship with corestriction structures of Cockett-Lack \cite{CL1}.

\begin{lma}\label{centricmoment}A moment is centric if and only if its inert part is the only inert section of its active part.\end{lma}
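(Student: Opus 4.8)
The plan is to unwind the definitions on both sides and show the two conditions coincide. Let $\phi$ be a moment of an object $A$, with chosen splitting $\phi=\phi_{in}\phi_{act}$ over an object $B$, so that $\phi_{act}$ is the unique active retraction of $\phi_{in}$ furnished by (M1), and $\phi_{in}$ is by construction an inert section of $\phi_{act}$. Recall (Definition \ref{geometric}) that $\phi$ is centric iff its congruence class $\{\psi\mid\phi\simeq\psi\}$ is a singleton, and by Lemma \ref{Greenorder}(iii) two moments $\phi,\psi$ of $A$ are congruent precisely when their active parts $\phi_{act},\psi_{act}$ are isomorphic under $A$. So I want to convert ``$\phi_{act}$ has no inert section other than $\phi_{in}$'' into ``no moment congruent to $\phi$ is distinct from $\phi$''.

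First I would prove the forward direction. Suppose $\phi$ is centric and let $j:B\to A$ be any inert section of $\phi_{act}:A\to B$, i.e. $\phi_{act}j=1_B$ with $j$ inert. Then $\psi:=j\phi_{act}$ is an endomorphism of $A$ with $\psi\psi=j(\phi_{act}j)\phi_{act}=j\phi_{act}=\psi$, and in fact $\psi$ is a moment splitting over $B$ via $j:B\lra A:\phi_{act}$ (the retraction $\phi_{act}$ is active, the section $j$ inert, and $\phi_{act}j=1_B$). Its active part is $\phi_{act}$ (up to the canonical iso of splittings), so by Lemma \ref{Greenorder}(iii) $\psi\simeq\phi$; centricity forces $\psi=\phi$, whence $j\phi_{act}=\phi_{in}\phi_{act}$, and post-composing with $j$ (using $\phi_{act}j=1_B=\phi_{act}\phi_{in}$... careful: we get $j\phi_{act}\phi_{in}=\phi_{in}\phi_{act}\phi_{in}$, i.e. $j=\phi_{in}$). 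So $\phi_{in}$ is the only inert section of $\phi_{act}$.

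Conversely, suppose $\phi_{in}$ is the only inert section of $\phi_{act}$, and let $\psi$ be congruent to $\phi$. By Lemma \ref{Greenorder}(iii) there is an isomorphism $\rho$ under $A$ with $\rho\,\psi_{act}=\phi_{act}$ (so also $\psi_{in}\rho^{-1}$ relates the inert parts). Then $\psi_{in}\rho^{-1}$ is a morphism $B\to A$; it is inert (composite of an inert morphism with an isomorphism, using that $\CC_{in}$ is closed), and $\phi_{act}(\psi_{in}\rho^{-1})=\rho\psi_{act}\psi_{in}\rho^{-1}=\rho\,1_{B_\psi}\,\rho^{-1}=1_B$, so it is an inert section of $\phi_{act}$. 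By hypothesis $\psi_{in}\rho^{-1}=\phi_{in}$, and combined with $\rho\psi_{act}=\phi_{act}$ this gives $\psi=\psi_{in}\psi_{act}=\phi_{in}\rho\cdot\rho^{-1}\phi_{act}=\phi_{in}\phi_{act}=\phi$. Hence the congruence class of $\phi$ is a singleton, i.e. $\phi$ is centric.

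The step I expect to require the most care is the bookkeeping around ``essential uniqueness'' of splittings: passing from ``$\psi$ and $\phi$ have isomorphic active parts under $A$'' (Lemma \ref{Greenorder}(iii)) to an \emph{equality} of the relevant morphisms needs the canonical isomorphism between two splittings of the same idempotent, and one must check that this isomorphism is compatible with the maps to and from $A$ so that the cancellations above are legitimate; the uniqueness clause in the definition of an active/inert factorisation system (orthogonality) is what makes these identifications canonical. Everything else is a direct manipulation of (M1) and the closure properties of $\CC_{act}$, $\CC_{in}$.
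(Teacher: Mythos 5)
Your proof is correct and follows essentially the same route as the paper's: the forward direction amounts to the paper's observation that two inert sections $i,i'$ of the same active retraction yield congruent moments $ir\simeq i'r$ (which you derive from Lemma \ref{Greenorder}(iii) and then cancel using $ri=1$), and your converse is literally the paper's argument with $\rho$ replaced by $\rho^{-1}$. The only blemish is the phrase ``post-composing with $j$'' where you in fact compose on the right with $\phi_{in}$, but the displayed computation is the correct one.
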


\begin{proof}For inert sections $i,i'$ of an active morphism $r$, the moments $ir$ and $i'r$ are congruent. Therefore, if $\phi=\phi_{act}\phi_{in}$ is centric then $\phi_{in}$ is the only inert section of $\phi_{act}$. Conversely, if the latter holds then for any congruence $\phi\simeq\psi$, Lemma \ref{Greenorder}iii yields $\rho$ such that $\rho\phi_{act}=\psi_{act}$ so that $\phi_{in}=\psi_{in}\rho$ whence $\phi=\psi_{in}\rho\phi_{act}=\psi$.\end{proof}

\begin{prp}\label{centric}For a moment category the following conditions are equivalent:

\begin{enumerate}\item[(i)]the moment structure is centric;\item[(ii)]moments of the same object commute;\item[(iii)]centricity axiom (MC) holds.\end{enumerate}\end{prp}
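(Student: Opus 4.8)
The plan is to prove the three-way equivalence by establishing the cycle (i) $\Rightarrow$ (ii) $\Rightarrow$ (iii) $\Rightarrow$ (i), exploiting the already-developed dictionary between moments, inert subobjects and congruence classes. The tool that links everything is the observation (from Definition \ref{geometric} and Remark \ref{LRB}) that $m_A$ is a left regular band whose congruence relation is Green's $\Dd$-relation, together with Lemmas \ref{formal}, \ref{Greenorder} and \ref{centricmoment}.

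First I would prove (i) $\Leftrightarrow$ (ii) essentially inside the local monoid $m_A$, invoking the semigroup fact recorded in Remark \ref{LRB}: a left regular band is commutative precisely when its $\Dd$-relation (equivalently its congruence relation $\simeq$) is discrete. Concretely, if $\phi,\psi\in m_A$ commute then from $\phi\psi=\psi\phi$ and the Sch\"utzenberger relation $\phi\psi\phi=\phi\psi$ one gets $\phi\psi=\phi\psi\phi$ and symmetrically $\psi\phi=\psi\phi\psi$, so $\phi\simeq\psi$; but $\phi\leq\psi$ and $\psi\leq\phi$ then force $\phi=\psi$ by Lemma \ref{formal}ii once one also notes $\phi\psi=\psi\phi$ lies below both. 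So commutativity of $m_A$ collapses each congruence class to a point, which is exactly centricity of every moment of $A$; conversely if every congruence class of $m_A$ is a singleton then the universal commutative quotient $m_A/\!\simeq$ equals $m_A$, i.e. $m_A$ is commutative. Doing this for every object $A$ gives (i) $\Leftrightarrow$ (ii).

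Next, (ii) $\Rightarrow$ (iii): suppose an active morphism $r:A\rAct B$ has two inert sections $i,i':B\rIn A$, giving moments $\phi=ir$ and $\phi'=i'r$ of $A$. A direct computation shows $\phi\phi' = ir i' r = i (ri') r = i 1_B r = ir = \phi$ and likewise $\phi'\phi=\phi'$, so $\phi$ and $\phi'$ are mutually right-absorbing; if moments of $A$ commute then $\phi=\phi\phi'=\phi'\phi=\phi'$, and since inert sections are recovered from their moments (the section is the inert part of the moment, by essential uniqueness of the factorisation and Lemma \ref{inertsubobject}), we get $i=i'$. Hence (MC) holds.

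Finally, (iii) $\Rightarrow$ (i): assume (MC). Given any moment $\phi$ of $A$ with factorisation $\phi=\phi_{in}\phi_{act}$, axiom (MC) says $\phi_{act}$ has at most one inert section, and $\phi_{in}$ is one such section, so $\phi_{in}$ is the unique inert section of $\phi_{act}$. By Lemma \ref{centricmoment} this is precisely the statement that $\phi$ is centric. As $\phi$ was arbitrary, the moment structure is centric. The main obstacle I anticipate is purely bookkeeping: being careful that ``at most one inert section of \emph{an active morphism}'' in (MC) is correctly matched against ``the inert section of \emph{the active part of a moment}'' in Lemma \ref{centricmoment}, i.e. that every active morphism admitting an inert section arises as the active part of the corresponding moment — which follows because if $i$ is an inert section of the active map $r$ then $ir$ is a moment whose active part is $r$ (its active retraction, unique by (M1)) and whose inert part is $i$. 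Once that identification is clean, each implication is a one- or two-line calculation.
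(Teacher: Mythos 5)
Your proof is correct and follows essentially the same route as the paper: the cycle (i)$\Rightarrow$(ii)$\Rightarrow$(iii)$\Rightarrow$(i), with (i)$\Rightarrow$(ii) quoted from the semigroup fact recorded in Remark \ref{LRB}, (ii)$\Rightarrow$(iii) by the mutual-absorption computation for two inert sections of the same active morphism, and (iii)$\Rightarrow$(i) via Lemma \ref{centricmoment}. Two caveats. First, your direct argument for (ii)$\Rightarrow$(i) is redundant (the cycle already closes) and, as written, fallacious: the identities $\phi\psi=\phi\psi\phi$ and $\psi\phi=\psi\phi\psi$ are instances of the Sch\"utzenberger relation and hold for \emph{all} pairs of moments, so they cannot yield $\phi\simeq\psi$, which requires $\phi=\phi\psi\phi$ and $\psi=\psi\phi\psi$; the statement you actually want is that commutativity collapses each congruence class, proved for $\phi\simeq\psi$ by $\phi=\phi\psi\phi=\phi\phi\psi=\phi\psi=\psi\phi=\psi\psi\phi=\psi\phi\psi=\psi$. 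Second, in (ii)$\Rightarrow$(iii) the step from $ir=i'r$ to $i=i'$ is under-justified: Lemma \ref{inertsubobject} and essential uniqueness only identify the inert parts up to isomorphism, so one more line is needed, namely precomposition with a section, $i=i(ri')=(ir)i'=(i'r)i'=i'(ri')=i'$ --- which is exactly how the paper concludes ($jfif=ifjf$ gives $jf=if$, then precompose with $i$). With these repairs the argument coincides with the paper's.
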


\begin{proof}We have seen in Remark \ref{LRB} that (i) implies (ii). Assume now (ii) and consider an active morphism $f:A\to B$ with inert sections $i,j:B\to A$. Then $jfif=ifjf$ implies $jf=if$ and hence $i=j$, whence (iii). By Lemma \ref{centricmoment}, (iii) implies (i).\end{proof}

\begin{prp}\label{corestriction}\emph{Corestriction structures} in the sense of Cockett-Lack \cite{CL1} correspond one-to-one to \emph{centric moment structures}. In particular, split corestriction categories are the same as centric moment categories.\end{prp}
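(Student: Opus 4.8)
The plan is to set up an explicit dictionary between the two notions and check the axioms match up term by term. Recall from Cockett--Lack \cite{CL1} that a \emph{corestriction structure} on a category $\Dd$ assigns to each morphism $f:A\to B$ an endomorphism $\overline{f}:B\to B$ (the ``corestriction idempotent'', dual to the restriction idempotent on the domain) subject to four axioms (the duals of the restriction axioms): $\overline{f}f=f$; $\overline{f}\,\overline{g}=\overline{g}\,\overline{f}$ whenever $f,g$ share the codomain; $\overline{\overline{g}f}=\overline{g}\,\overline{f}$; and $\overline{g}\,\overline{hg}=\overline{ghg}$ (or, in the more convenient equivalent form, $g\overline{f}=\overline{gf}\,g$ when $f,g$ are composable, which is exactly the ``dual'' of the key restriction identity). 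A corestriction structure is \emph{split} if each corestriction idempotent $\overline{f}$ splits. The assignment I propose is: given a centric moment structure on $\CC$, set $\overline{f}=f_*(1_A)\in m_B$ for $f:A\to B$; conversely, given a corestriction structure on $\Dd$, declare the moments of $B$ to be the endomorphisms of the form $\overline{f}$ for some $f$ with codomain $B$, and define the pushforward by $f_*(\phi):=\overline{f\phi}$ (equivalently $\overline{\phi_{in}}$ if $\phi$ splits as $\phi_{in}\phi_{act}$).

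First I would check that starting from a centric moment structure and applying the above gives a corestriction structure. Axiom $\overline{f}f=f$ is the relation $f_*(1_A)f=f1_A=f$, which is (m4) with $\phi=1_A$. The identity $g\overline{f}=\overline{gf}\,g$ is again (m4): $g\overline{f}=g f_*(1_A)=(gf)_*(1_A)g=\overline{gf}\,g$ using (m3); more generally $\overline{\overline{g}f}=\overline{g}_*(f_*(1_A))=\overline{g}\,\overline{f}$ combines (m3) and (m2). Commutativity $\overline{f}\,\overline{g}=\overline{g}\,\overline{f}$ is precisely the content of Proposition \ref{centric}: moments of the same object commute exactly when the moment structure is centric. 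The Schützenberger-type axiom $\overline{g}\,\overline{hg}=\overline{ghg}$ follows from Lemma \ref{formal}i together with (m2), (m3). For the reverse direction, I would verify that the proposed $m_B$ and $f_*$ satisfy (m1)--(m4): (m1) is $1_A=\overline{1_A}$; (m4) is the corestriction identity $g\overline{f}=\overline{gf}\,g$ read with $\phi=\overline{f}$; (m3) follows from $\overline{\overline{g}f}=\overline{g}\,\overline{f}$; and (m2), i.e. $\phi_*(\psi)=\phi\psi$, follows from the corestriction axioms applied to $\phi=\overline{f}$, $\psi=\overline{g}$ using $\overline{\overline{f}g}=\overline{f}\,\overline{g}$. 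Then I would observe that these two constructions are mutually inverse: the moments produced from the corestriction structure are exactly those endomorphisms $\phi$ with $\overline{\phi}=\phi$, which by Lemma \ref{formal}iii are exactly the moments of the original structure, and the two pushforward operations agree.

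For the ``in particular'' clause, note that a \emph{split} corestriction structure is one in which every $\overline{f}$ splits, which under the dictionary says every moment splits; by Proposition \ref{formalsplit} a centric moment structure in which all moments split is exactly a centric moment category, so split corestriction categories correspond one-to-one to centric moment categories. One small point to be careful about: I should confirm that in a split corestriction category the chosen splittings of $\overline{f}$ are compatible enough that the derived active/inert factorisation system agrees with the one coming from \ref{formalsplit} — but since splittings of an idempotent are unique up to canonical isomorphism (as used already in Lemma \ref{inertsubobject}), this causes no trouble.

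The main obstacle I anticipate is purely bookkeeping: making sure the handedness (restriction versus corestriction, left versus right absorption) is matched correctly, since Cockett--Lack work with restriction structures on the \emph{domain} and the relevant object here is the \emph{codomain}, so one is genuinely comparing moment structures with \emph{co}restriction structures, and a sign-like error in which composite absorbs which would silently break one of the four axioms. Beyond that dualisation care, the verification is a routine, if somewhat lengthy, check that four axioms translate into four axioms, and the content is entirely captured by (m1)--(m4), Lemma \ref{formal}, and Proposition \ref{centric}.
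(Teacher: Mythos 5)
Your proposal is correct and follows essentially the same route as the paper: both set up the dictionary $\overline{f}=f_*(1_A)$, take the moments of $B$ to be the corestriction idempotents on $B$ (equivalently the $\phi$ with $\overline{\phi}=\phi$), recover the pushforward via $f_*(\phi)=\overline{f\phi}$, and match the four Cockett--Lack axioms against (m1)--(m4) plus commutativity (Proposition \ref{centric}), with the split case handled by Proposition \ref{formalsplit}. The only blemish is your first stated form of the fourth axiom, $\overline{g}\,\overline{hg}=\overline{ghg}$, which is not well-typed as written; but the equivalent form $g\overline{f}=\overline{gf}\,g$ that you actually use is the correct one and is exactly the paper's (C4).
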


\begin{proof}A corestriction structure is the dual of a restriction structure (cf. \cite[2.1.1]{CL1}). It is defined in terms of so-called \emph{cocombinators} $\CC(A,B)\to\CC(B,B):f\mapsto f_*(1_A).$ These cocombinators extend to pushforward operations $f_*:m_A\to m_B$ by the rule $f_*(\phi)=(f\phi)_*(1_A)$ where $m_A=\{\phi\in\CC(A,A)\,|\,\phi_*(1_A)=\phi\}$. Calling the elements of $m_A$ moments, the axioms of Cockett-Lack can be stated as follows:
\begin{itemize}\item[(C1)]$f_*(1)f=f$ for any morphism $f$;
\item[(C2)]$\phi\psi=\psi\phi$ for any moments $\phi,\psi$ of the same object;
\item[(C3)]$(\psi f)_*(1)=\psi\, f_*(1)$ for any morphism $f$ and moment $\psi$ of the target of $f$;
\item[(C4)]$g\, f_*(1)=(gf)_*(1)\,g$ for any composable morphisms $f$ and $g$.\end{itemize}

In \cite[Lemma 2.1iii]{CL1} Cockett and Lack deduce from (C1),(C2),(C3),(C4) that $g_*(\phi)=(g\phi)_*(1)$ for any moment $\phi$ of the source of $g$. This implies axiom (m3). Axiom (C4) then yields $g\phi=g_*(\phi)g$ which is (m4). It follows from (C3) that the composite of two moments is a moment, which is equivalent to (m2), as soon as (m3) holds. Finally, (C1) implies (m1). By (C2), moments of the same object commute. Conversely, in a centric moment structure axiom (C2) holds, (C4) follows from (m4), (C1) follows from (C4) and (m1), while (C3) follows from (m2), (m3) and Lemma \ref{formal}iii. A corestriction category is split \cite[2.3.3]{CL1} if and only if all moments split. Propositions \ref{formalsplit} and \ref{centric} thus establish the second statement.\end{proof}

\begin{rmk}\label{relationship}This close relationship between corestriction and moment structures was a surprise for us. However, in most examples relevant to operads, the moment categories are \emph{not} centric, but the \emph{splittings} are needed for the definition of the operad-type associated to a unital moment category. Therefore, starting from centric moment categories (i.e. split corectriction categories) our main concern consisted in dropping centricity (while keeping the splittings), whereas the main concern of Cockett-Lack was to drop the existence of splittings (while keeping centricity). There are nevertheless similarities between both approaches.

We have seen that every category with moment structure admits an \emph{idempotent completion} turning it into a moment category. The existence of this idempotent completion relies (just as in the case of corestriction categories) on the completely equational character of the axioms (m1)-(m4) of a moment structure.

Our terminology differs from that of Cockett-Lack, partly in order to avoid the use of too many co-s. The dictionnary is as follows:\begin{center}moment=corestriction idempotent, active=cototal, retractive=split corestriction.\end{center}\end{rmk}

We end this introductory section by showing that each moment category has a universal centric quotient.

\begin{lma}\label{retractive}For any moment category, the pushout of a retractive morphism along an active morphism exists in the active part and is again retractive.\end{lma}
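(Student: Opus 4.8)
The plan is to work with an explicit candidate for the pushout and verify it has the required universal property in the active part $\CC_{act}$, then check retractivity separately. Let $r:A\rAct B$ be retractive with inert section $s:B\rIn A$ (so $rs=1_B$ and $\phi=sr$ is a moment of $A$), and let $f:A\rAct C$ be active. I would form the composite $fs:B\to C$ and take its active/inert factorisation $fs=j\circ\bar f$ with $\bar f:B\rAct D$ active and $j:D\rIn C$ inert. The first claim is that the square with corners $A,B,C,D$ and arrows $r,f,\bar f$ and the active retraction $q:C\rAct D$ of $j$ is a pushout in $\CC_{act}$. To see that it commutes, note $qf$ and $\bar fr$ are both active, and $qf s = q j\bar f=\bar f=\bar f r s$; since $f_*(\phi)=f_*(sr)=(fsr)_*(1_A)$ and $fsr=jq f$ shows $f_*(\phi)=jq$ (the moment of $C$ with inert part $j$), one gets $qf\cdot\phi=qf$, and then applying (M2)$'$ (or the relation $f\phi=f_*(\phi)f$ from \ref{pushforward}) to the commuting square built from $r$, $s$, $f$ yields $qf=\bar f r$.

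For the universal property, suppose $u:B\rAct E$ and $v:C\rAct E$ are active with $ur=vf$. Define $w=vj:D\to C\to E$; I must show $w$ is active, $w q=v$, and $w\bar f=u$, and that $w$ is unique with the last two properties. Activity of $w$: since $v$ is active and $j$ is inert, $w=vj$ is active precisely when $v_*(jq)=1_E$, and $jq=f_*(\phi)$ is a moment of $C$, so $v_*(f_*(\phi))=(vf)_*(\phi)=(ur)_*(\phi)$; but $r_*(\phi)=1_B$ by \ref{formal}(v), hence $(ur)_*(\phi)=u_*(r_*(\phi))=u_*(1_B)=1_E$, giving activity. The identity $w\bar f=u$ follows from $vj\bar f=vfs=urs=u$. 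For $wq=v$: we have $wq=vjq=vf_*(\phi)\cdot(\text{insert } f)$; more directly, $v(jq)=v$ would require $jq=1_C$, which is false, so instead I compare $wq$ and $v$ after precomposition. Precomposing with the (jointly epic in the relevant sense) pair coming from the pushout data: $wq\cdot f=vj q f=vj\bar f r=vfsr=v f\phi$, and $v f\phi = v f$ would need $f\phi=f$, again false — so the cleaner route is to use that $qf=\bar fr$ and $q$ is a retraction: from $w\bar f=u$ and $qf=\bar f r$ we get $w q f=w\bar f r=ur=vf$, and since any morphism $C\to E$ is determined on the active part by... — here I would instead invoke that $j$ is the inert part of $1_C$'s factorisation through $D$ only up to the moment $jq$, and use \ref{formal}(v) applied to the splitting $j,q$ of $f_*(\phi)$ together with $v_*(f_*(\phi))=1_E$ to conclude $vjq=v$. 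Indeed \ref{formal}(v) gives $q_*(f_*\phi)=1_D$ and the computation $v = v_*(f_*\phi)\cdot$-free rearrangement via (m4): $v\cdot (jq)=v_*(jq)\cdot v = 1_E\cdot v=v$ after using (m4) in the form $v(jq)=v_*(jq)v$ — wait, that reads $v\cdot f_*(\phi)=v_*(f_*\phi)\cdot v=v$, exactly what is needed, hence $wq=vjq=v$. Uniqueness of $w$ follows since $q$ is a split epimorphism (it has section $j$), so $wq=v$ already pins down $w$.

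Finally, retractivity of the pushout morphism: the relevant morphism out of the pushout object is $q:C\rAct D$, which is the active retraction of the inert morphism $j:D\rIn C$; thus $j$ is an inert section of $q$, so $q$ is retractive by definition.

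The main obstacle I anticipate is the bookkeeping in the universal-property argument — specifically, getting the equation $wq=v$ cleanly, since $q$ is not monic and naive cancellation fails; the resolution is to systematically replace "cancellation" by the pushforward identity (m4) applied to the moment $f_*(\phi)=jq$ of $C$, using $v_*(f_*(\phi))=1_E$. A secondary subtlety is making sure $qf=\bar fr$ genuinely follows from (M2)$'$: one applies it to the square with verticals the inert $s:B\rIn A$ (retraction $r$) and $j:D\rIn C$ (retraction $q$) and horizontals $f:A\rAct C$, $\bar f:B\rAct D$, after first checking $f s = j\bar f$, which is the defining factorisation.
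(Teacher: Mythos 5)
Your proof is correct and follows essentially the same route as the paper: you take the splitting $jq=f_*(sr)$ of the pushforward moment as the candidate pushout, observe that the mediating map is forced to be $w=vj$, and verify the universal property. The only cosmetic difference is that where the paper invokes axiom (M2) and the cancellation property of factorisation systems, you phrase the same verifications (activity of $w$ and the identity $wq=v$) through the equational axioms (m3), (m4) and Lemma \ref{formal}(v) --- despite the visible false starts in your write-up, the argument you land on is sound.
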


\begin{proof}Consider the following diagram

$$\xymatrix{A\ar[r]^f|+\ar[d]^r|+&B\ar[d]^{r'}|+\ar[rdd]^g&\\A'\ar@{ >->}@<1ex>[u]^i\ar[r]_{f'}|+\ar[rrd]_{g'}&B'\ar@{ >->}@<1ex>[u]^{i'}\ar@{.>}[rd]|h&\\&&C}$$

\noindent in which $i$ is any inert section of the retractive morphism $r$, and $i'r'=f_*(ir)$. We claim that the retractive morphism $r':B\rAct B'$ is the pushout in $\CC_{act}$ of $r$ along the active morphism $f:A\rAct B$. For this, we check the universal property and choose two active morphisms $g,g'$ such that $g'r=gf$. If $h:B'\rAct C$ with $hr'=g$ exists then necessarily $h=gi'$. So there is no choice for $h$, and it remains to be shown that $hf'=g'$, that $h$ is active, and that $hr'=g$.

Indeed, $hf'=gi'f'=gfi=g'ri=g'$. Since $g'$ and $f'$ are active and $hf'=g'$, it follows from general properties of factorisation systems that $h$ is active as well. Therefore, $g$ and $gi'=h$ are active, so that by axiom (M2), $g=hr'$ as required.\end{proof}

For a moment category $\CC$ we define $B\CC$ to be the category with same objects as $\CC$ but with morphism-set $(B\CC)(A,B)$ the set of isomorphism classes of cospans $$(f,r)=(A\rAct^f B'\lAct^r B)$$ where $f$ is active and $r$ is retractive. Thanks to Lemma \ref{retractive}, these isomorphism classes compose in the following way: $(g,s)(f,r)=(gf',s'r)$ where $f'$ (resp. $s'$) is the pushout of $f$ (resp. $s$) along $s$ (resp. $f$) in the active part of $\CC$.

We thus get an identity on objects functor $\CC\to B\CC$ taking the morphism $f:A\to B$ to the cospan $(f_{act},r_f)$ where $r_f$ is the unique active retraction of $f_{in}$.

Two parallel morphisms $f,g:A\rightrightarrows B$ are said to be \emph{congruent} (denoted $f\simeq g$) if they are identified under $\CC\to B\CC$. By Lemma \ref{Greenorder}iii, this is the case if and only if the moments $f_*(1_A)$ and $g_*(1_A)$ are congruent in $m_B$.

Observe that $B\CC$ comes equipped with an active/inert factorisation system for which isomorphism classes of cospans of the form $(f_{act},1)$ are active, and isomorphism classes of cospans of the form $(1,r_f)$ are inert. It is straightforward to verify that $B\CC$ satisfies the axioms (M1), (M2), (MC) of a centric moment category.

\begin{prp}\label{Burnside}A moment category $\CC$ is centric if and only if the functor $\CC\to B\CC$ is invertible. In general, the functor $\CC\to B\CC$ is initial among moment functors out of $\,\CC$ taking values in centric moment categories.\end{prp}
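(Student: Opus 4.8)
The plan is to realise $B\CC$ as the quotient of $\CC$ by the congruence relation $\simeq$, and then to show that a moment functor into a centric moment category is exactly a functor that respects $\simeq$. Write $q\colon\CC\to B\CC$ for the structural functor.

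I would begin by recording two features of $q$: it is the identity on objects, and it is \emph{full}. Fullness is seen directly: a morphism of $B\CC$ is an isomorphism class of a cospan $A\to C\leftarrow B$ with active leg $f$ and retractive leg $r$; choosing an inert section $i\colon C\to B$ of $r$ (possible since $r$ is retractive), the composite $h=if$ admits the active/inert factorisation $h_{\mathrm{act}}=f$, $h_{\mathrm{in}}=i$, and its associated active retraction $r_h$ equals $r$ by the uniqueness in (M1); so $q(h)$ is the given class. Since $q$ is the identity on objects and full, it is invertible precisely when it is faithful. If $\CC$ is centric and $f\simeq g\colon A\to B$, then the cospans $(f_{\mathrm{act}},r_f)$ and $(g_{\mathrm{act}},r_g)$ are isomorphic, say via $\theta$ with $\theta f_{\mathrm{act}}=g_{\mathrm{act}}$ and $\theta r_f=r_g$; then $f_{\mathrm{in}}$ and $g_{\mathrm{in}}\theta$ are both inert sections of $r_f$, so axiom (MC) forces $f_{\mathrm{in}}=g_{\mathrm{in}}\theta$ and hence $f=g$, giving faithfulness. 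Conversely, if $q$ is faithful then, since $\phi_*(1_A)=\phi$ for any moment $\phi$ (Lemma \ref{formal}iii), any two congruent moments of an object of $\CC$ coincide, so the moment structure is centric and (MC) holds by Proposition \ref{centric}.

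For the second assertion, fix a moment functor $F\colon\CC\to\Dd$ with $\Dd$ centric. The crucial point, which I would isolate as a lemma, is that \emph{$F$ identifies congruent morphisms}. Given $f\simeq g\colon A\to B$, take $\theta$ as above; then $F(f_{\mathrm{in}})$ and $F(g_{\mathrm{in}})F(\theta)$ are two inert sections of the active morphism $F(r_f)$ in $\Dd$ — here one uses that a moment functor preserves active and inert morphisms, hence (by essential uniqueness of factorisations) preserves active/inert factorisations and the unique active retractions of (M1). Centricity of $\Dd$ then forces $F(f_{\mathrm{in}})=F(g_{\mathrm{in}})F(\theta)$, and substituting this into $F(f)=F(f_{\mathrm{in}})F(f_{\mathrm{act}})$ together with $F(g_{\mathrm{act}})=F(\theta)F(f_{\mathrm{act}})$ yields $F(f)=F(g)$.

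With the lemma in hand the rest is formal. Because $q$ is the identity on objects, full, and identifies two morphisms of $\CC$ exactly when they are congruent, it presents $B\CC$ as the quotient category $\CC/\!\simeq$; hence $F$ descends uniquely to a functor $\bar F\colon B\CC\to\Dd$ with $\bar Fq=F$, namely $\bar F=F$ on objects and $\bar F(m)=F(h)$ for any $h$ with $q(h)=m$ (well defined by the lemma, functorial because a composite of preimages is a preimage of a composite, and unique because $q$ is the identity on objects and full). Finally $\bar F$ is a moment functor: every active morphism of $B\CC$ is $q$ of an active morphism of $\CC$ (a class $(f_{\mathrm{act}},1)$ equals $q(f_{\mathrm{act}})$) and every inert morphism of $B\CC$ is $q$ of an inert morphism of $\CC$ (a class $(1,r)$ equals $q$ of an inert section of $r$), and $F$ preserves activeness and inertness. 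I expect the only genuine obstacle to be the lemma, and inside it the observation that congruent morphisms of $\CC$ differ solely by the choice of an inert section of a shared active retraction — precisely the ambiguity that axiom (MC) suppresses in the centric target $\Dd$. Everything else, namely fullness of $q$, the quotient-category manipulation, and the check that $\bar F$ is a moment functor, is routine.
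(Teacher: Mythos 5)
Your proof is correct and follows essentially the same route as the paper: both identify $B\CC$ with the quotient $\CC/\!\simeq$ and factor $F$ through the identity-on-objects, full functor $q:\CC\to B\CC$. Your explicit lemma that a moment functor into a centric target identifies congruent morphisms (via axiom (MC) applied to the two inert sections $F(f_{in})$ and $F(g_{in})F(\theta)$) is precisely the content the paper packages into its commutative square $\CC/\!\simeq\;\to\;\DD/\!\simeq\;\cong\;\DD$, and your treatment of the first statement via $\phi=\phi_*(1_A)$ matches the paper's.
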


\begin{proof}The first statement is a consequence of the fact that each moment $\phi$ of an object $A$ equals $\phi_*(1_A)$. For the second statement, let $F:\CC\to\DD$ be a functor with centric target $\DD$ and consider the following commutative diagram of functors\begin{diagram}[small]\CC&\rTo^F&\DD\\\dTo&&\dTo_\cong\\\CC/\!\!\simeq&\rTo&\DD/\!\!\simeq\end{diagram}in which the right vertical functor is an isomorphism because $\DD$ is centric. We therefore get the required factorisation  $F:\CC\to\CC/\!\!\simeq\,\to\DD$ which is unique because the functor $\CC\to\CC/\!\!\simeq$ is identity on objects and full.\end{proof}

\begin{rmk}It follows from Lemma \ref{Greenorder}iii that the categorical congruence restricts to Green's $\Dd$-relation on $m_A$ for each object $A$. For congruent parallel morphisms $f,g:A\rightrightarrows B$, the pushforwards $f_*(\phi)$ and $g_*(\phi)$ are congruent in $m_B$ for any $\phi\in m_A$. The first part of Proposition \ref{Burnside} is a \emph{Representation Theorem} for centric moment categories. Indeed, any centric moment category $\CC$ is isomorphic to $B\CC$, and is thus entirely determined by its active part.

The dual representation theorem for \emph{split restriction categories} (cf. Remark \ref{relationship}) has been obtained by Cockett-Lack using different methods, cf. \cite[Theorem 3.4]{CL1}.\end{rmk}

\section{Unital moment categories}\label{unitalmomentsection}

The purpose of this section is to single out a class of moment categories, called \emph{unital}, in which moments decompose in a canonical way into \emph{elementary} moments.  A moment category is unital if there is a sufficient supply of so-called \emph{unit objects}. The elementary moments are then those which split over a unit object.

Each \emph{active} morphism $f:A\to B$ induces a \emph{partition} of the identity moment of $B$ into moments $f_*(\alpha)$ where $\alpha$ runs through the elementary moments of $A$. This will allow us to associate, with every unital moment category of finite type, a specific \emph{operad type} incorporating these partitions as its \emph{substitutional structure}. We shall call operads of type $\CC$ simply $\CC$-operads.

The unital moment category $\Gamma$ enjoys a universal property. Each unital moment category of finite type comes equipped with an essentially unique augmentation $\gamma_\CC:\CC\to\Gamma$, and $\Gamma$-operads turn out to be precisely \emph{symmetric operads} in the sense of Boardman-Vogt \cite{BV} and May \cite{May}. The augmentation $\gamma_\CC$ induces an adjunction between the categories of $\CC$-operads and of $\Gamma$-operads so that $\CC$-operads can be \emph{symmetrised}.

The \emph{wreath product of $\,\Gamma$-augmented categories} as defined in \cite{Be2} takes a pair $(\CC,\DD)$ of unital moment categories to a unital moment category $\CC\wr\DD$. We give here an intrinsic description of this wreath product not depending on a chosen augmentation over $\Gamma$ and valid without finite type hypothesis.

It follows from \cite{Be2} that the operator categories $\Theta_n$ of Joyal \cite{J} are iterated wreath products of the simplex category $\Delta$. The unital moment structure on $\Delta$ induces thus a unital moment structure on $\Theta_n$. It turns out that the resulting $\Theta_n$-operads are precisely the \emph{$(n-1)$-terminal $n$-operads} of Batanin \cite{Ba,Ba2} and that our symmetrisation functor coincides with Batanin's \cite{Ba2} in this special case.\vspace{1ex}

\subsection{Units}

We shall say that an object $U$ of a moment category is \emph{primitive} provided $U$ has non-identity moments, but they all are congruent.

It amounts to the same to require that the universal semilattice quotient $m_U/\!\simeq$ is the two-element lattice $\{0,1\}$. Indeed, the congruence class of $1$ is singleton and contains the identity-moment $1_U$ while the congruence class of $0$ contains all the other moments of $U$, cf. Remark \ref{LRB}.

\begin{dfn}\label{unitaldefinition}An object $\,U$ of a moment category is called a \emph{unit} if\begin{itemize}\item[(U1)]the object $U$ is primitive;\item[(U2)]each active morphism with target $\,U$ has one and only one inert section.\end{itemize}

A moment category is called \emph{unital} if for each object $A$ there exists an essentially unique pair $(U,\phi)$ consisting of a unit $\,U$ and an active map $\phi:U\rAct A$.\vspace{1ex}

A moment is called \emph{elementary} if it splits over a unit.\vspace{1ex}

Two moments of the same object are called \emph{disjoint} if they do not share common elementary submoments.\vspace{1ex}

The \emph{cardinality} of an object is the cardinality the set of its \emph{elementary} moments. A \emph{nilobject} is an object of cardinality $0$.\vspace{1ex}

A unital moment category is \emph{of finite type} if the underlying category is small and every object has finite cardinality.\end{dfn}

\begin{lma}\label{incomparable}--

\begin{enumerate}\item[(i)]Any inert (resp. active) morphism between units is invertible;\item[(ii)]Elementary moments of the same object are either equal or disjoint.\end{enumerate}\end{lma}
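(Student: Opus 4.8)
For (i), the plan is to use the primitivity of a unit together with Lemma~\ref{incomparable} will be built on earlier results. Suppose $i:U\rIn V$ is inert between units. Then $i$ has a unique active retraction $r:V\rAct U$ by (M1), and $\phi=ir$ is a moment of $V$. If $i$ is not invertible, then $\phi\neq 1_V$, so $\phi$ is a non-identity moment of the primitive object $V$, hence non-centric; but $\phi$ splits over $U$, which is itself a unit. I would argue that $\phi$ must then be centric: by (U2), the active part $\phi_{act}=r$ has $i$ as its \emph{unique} inert section, so by Lemma~\ref{centricmoment} the moment $\phi$ is centric — contradiction. Hence $i$ is invertible. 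For an active morphism $f:U\rAct V$ between units, I would dualise: $f$ has a unique inert section $s:V\rIn U$ by (U2), and $sf$ is a moment of $U$; since $U$ is primitive and $sf$ splits over $V$ (a unit), the same argument via (U2) applied to $V$ and Lemma~\ref{centricmoment} shows $sf$ is centric, forcing $sf=1_U$; thus $f$ is a split mono with retraction $s$, and being active and a split mono it has active retraction $s$, so $fs$ is a moment of $V$, again centric by the same reasoning, hence $fs=1_V$ and $f$ is invertible.

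For (ii), let $\alpha,\beta$ be elementary moments of $A$, splitting over units $U_\alpha,U_\beta$ respectively, and suppose $\gamma\leq\alpha$ and $\gamma\leq\beta$ for some elementary moment $\gamma$. The plan is to show the product $\alpha\beta$ is elementary and deduce $\alpha=\beta$. Write $\alpha=i_\alpha r_\alpha$ with $r_\alpha i_\alpha=1_{U_\alpha}$. By Lemma~\ref{Greenorder}(i), $\gamma\leq\alpha$ means the inert subobject of $\gamma$ factors through $i_\alpha$; but any inert morphism into $U_\alpha$ is, by part (i) applied to the corresponding unit (a nilobject has no elementary moments, so a nonzero elementary submoment forces the domain to be a unit), either invertible or corresponds to the zero moment. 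Since $\gamma$ is elementary and nonzero, the inert subobject $U_\gamma\rIn A$ through $i_\alpha$ gives an inert $U_\gamma\rIn U_\alpha$ between units, hence invertible by (i); so $\gamma\simeq\alpha$, and since both are centric — here I would note that elementary moments are centric, because their active parts have unique inert sections by (U2) and Lemma~\ref{centricmoment} — we get $\gamma=\alpha$. Symmetrically $\gamma=\beta$, whence $\alpha=\beta$.

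The main obstacle I anticipate is the bookkeeping around ``nonzero elementary submoment'': one must be careful that an inert subobject of $A$ contained in $i_\alpha:U_\alpha\rIn A$ really is represented by an inert morphism \emph{into} $U_\alpha$ (this uses Lemma~\ref{Greenorder}(i) in the split case, which holds since we are in a genuine moment category where all moments split) and that the resulting map between the two units is inert (it is, since inert morphisms are closed and $i_\alpha$ is inert, so the factoring map is inert by the right-cancellation property of the inert class under the factorisation system). Once that is pinned down, part (i) does the rest. A secondary point to handle cleanly is the claim that elementary moments are centric; this should be isolated as the observation that a moment splitting over a unit $U$ has active part admitting a unique inert section by (U2), so Lemma~\ref{centricmoment} applies directly — I would state this explicitly since it is reused in both parts.
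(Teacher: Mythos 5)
Your proof is correct and follows essentially the same route as the paper: part (i) combines (U2), Lemma~\ref{centricmoment} and primitivity (U1) to force $ir=1$, and part (ii) reduces, via Lemma~\ref{Greenorder}(i), the sharing of an elementary submoment to an inert morphism between units, which part (i) makes invertible. The only differences are cosmetic detours (the congruence/centricity step at the end of (ii) — equal inert subobjects already give equal moments by Lemma~\ref{inertsubobject} — and the redundant verification of $fs=1_V$ in the active case, which holds by definition of section), so the argument matches the paper's.
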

\begin{proof}(i) Both assertions are equivalent. Let $i:U\rIn U'$ be an inert morphism between units. Then by (U2) $i$ is the unique inert section of the active retraction $r:U'\rAct U$. By Lemma \ref{centricmoment} this implies that the moment $ir$ of $U'$ is centric so that by (U1) we get $ir=1_{U'}$.

(ii) It suffices to show that comparable elementary moments are equal. By Lemma \ref{Greenorder}i, moments are comparable if and only if the associated inert subobjects are comparable. For elementary moments these subobjects are units so that (i) allows us to conclude.\end{proof}

\begin{rmk}\label{unitalexamples}All examples of Section \ref{examples} are unital moment categories:\vspace{1ex}

(a) Segal's category $\Gamma$ has a single unit, the one-element set $\11$. The elementary moments of $\nn$ correspond to inert subsets $\11\rIn\nn$, i.e. to elements of the set $\nn$. The cardinality of $\nn$ is thus $n$ and the only nilobject of $\Gamma$ is $\underline{0}$. For every objet $\nn$ there is a \emph{unique} active morphism $\11\rAct\nn$.

(b) The simplex category $\Delta$ has a single unit, the segment $[1]$. The elementary moments of $[n]$ correspond to inert subobjects $[1]\rIn\, [n]$, i.e. to subsegments. The cardinality of $[n]$ is thus $n$ and $[0]$ is the only nilobject of $\Delta$. For every object $[n]$ there is a \emph{unique} active morphism $[1]\rAct\, [n]$.

(c) Joyal's category $\Theta_n$ has a single unit, the linear $n$-level tree $U_n$ of height $n$. The elementary moments of an $n$-level tree $T$ correspond to the vertices of $T$ of height $n$. The nilobjects of $\Theta_n$ are thus the $n$-level trees of height $<n$. Again, each $n$-level tree $T$ receives a unique active morphism $U_n\rAct T$, cf. Proposition \ref{unitalwreath}.

(d) The idempotent completion $\overline{\CC}_M$ of a left regular band $M$ has a single unit, represented by the neutral element $1\in M$. Every morphism in $\widehat{\CC}_M$ is either inert or retractive or a moment, cf. Remark \ref{LRB}. All objects different from $1$ are nilobjects.\end{rmk}

From now on we fix a \emph{unital moment category} $\CC$. The following lemma is the main reason for having introduced the notion of elementary moment. The argument is roughly speaking dual to the one establishing that for any set-mapping $f:X\to Y$ and $x\in X$ there is a unique $y\in Y$ such that $x\in f^{-1}(y)$.

\begin{lma}\label{active=surj}For any elementary moment $\psi$ of $B$ and active morphism $f:A\rAct B$ there is a unique elementary moment $\phi$ of $A$ such that $\psi\leq f_*(\phi)$.\end{lma}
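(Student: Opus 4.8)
The statement asserts a dual-of-fibers property: given an active $f:A\rAct B$ and an elementary moment $\psi$ of $B$ (i.e.\ $\psi$ splits over a unit $V\rIn B$), there is a \emph{unique} elementary moment $\phi$ of $A$ with $\psi\leq f_*(\phi)$. The plan is to produce $\phi$ by running the unital structure backwards: first build $\phi$ via a pullback-of-units construction, then verify $\psi\leq f_*(\phi)$, and finally prove uniqueness by reducing an arbitrary competitor to the same inert subobject.

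\textbf{Existence.} Write $\psi=j r_j$ with $j:V\rIn B$ a unit-inclusion and $r_j:B\rAct V$ its unique active retraction (Lemma \ref{centricmoment}, using (U2)). Since $\CC$ is unital, pick the essentially unique active morphism $u:U\rAct A$ out of a unit $U$. The plan is to form the active/inert factorisation of the composite $r_j f u:U\to V$. Because units receive essentially unique active morphisms and any active or inert morphism between units is invertible (Lemma \ref{incomparable}(i)), the active part of $r_j f u$ lands in a unit isomorphic to $V$ and its inert part is then an isomorphism; so $r_j f u$ is itself active (composing $U\rAct V$ with an iso), hence $(fu)_*(1_U)$ has a retraction factoring through $V$. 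More directly: consider the inert morphism $i:=$ (the inert part of $fu$), giving an elementary moment $\phi:=(fu)_*(1_U)=i r_i$ of $A$ splitting over a unit $U'\cong U$. I would then check $f_*(\phi)=f_*((fu)_*(1_U))$; but this is not quite $(fu)_*$ — instead use $f u = f_*(1_U)\cdot$(stuff)? The cleaner route: set $\phi$ to be the elementary moment of $A$ obtained as the inert-part moment of $fu$, and compute $f_*(\phi)\geq f_*(\,$the moment under which the image of $u$ sits$\,)$. Concretely, since $r_j f u$ is active and $U\rAct V$ forces (by Lemma \ref{incomparable}(i)) that it exhibits $V$ as receiving an active map from a unit, the unit $V$ sits inside the image of $fu$; pushing the elementary moment $\phi=\mathrm{(inert\ part\ of\ }fu)$ forward along $f$ and using functoriality $f_*\phi \supseteq f_*((fu)_*1_U)=\ldots$ gives $\psi=j r_j\leq f_*(\phi)$. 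The key identity to invoke throughout is $f\phi=f_*(\phi)f$ (the corollary to the pushforward construction) together with $f_*$ being order-preserving (Lemma \ref{Greenorder}(ii)).

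\textbf{Uniqueness.} Suppose $\phi,\phi'$ are both elementary moments of $A$ with $\psi\leq f_*(\phi)$ and $\psi\leq f_*(\phi')$. By Lemma \ref{incomparable}(ii), elementary moments of $A$ are either equal or disjoint, so it suffices to show $\phi$ and $\phi'$ are not disjoint, i.e.\ share a common elementary submoment — but since they \emph{are} elementary, sharing any nonzero submoment forces equality. The plan is to derive a contradiction from disjointness: if $\phi,\phi'$ were disjoint, I would argue that $f_*(\phi)$ and $f_*(\phi')$ have disjoint elementary parts over $B$ as well (using that $f_*$ preserves the submoment order and, crucially, takes disjoint moments to disjoint moments — the property flagged in the Section 2 abstract that ``pushforward takes disjoint moments to disjoint moments''). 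Then $\psi$, being elementary, would be a common elementary submoment of $f_*(\phi)$ and $f_*(\phi')$, contradicting their disjointness. Hence $\phi=\phi'$.

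\textbf{Main obstacle.} The delicate point is existence: pinning down precisely which elementary moment $\phi$ of $A$ works and verifying $\psi\leq f_*(\phi)$ rather than just $\psi\leq f_*(\phi)$ up to congruence. The argument hinges on the rigidity of units — that active and inert maps between units are isomorphisms (Lemma \ref{incomparable}(i)) and that each object has an essentially unique active map from a unit — to force the active/inert factorisation of $r_j f u$ to degenerate, so that the unit $V$ genuinely embeds inside the inert subobject of $A$ cut out by $\phi$. Getting this without appealing to a not-yet-available "disjointness is preserved" lemma (which may itself be proved using this very lemma) is the subtle part; if circular, the fallback is the bare-hands route via the identity $f\phi = f_*(\phi)f$ applied to $u$, reading off $\phi$ as the unique elementary moment through which the inert part of $fu$ factors.
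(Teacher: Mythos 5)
There are genuine gaps in both halves of your argument. \textbf{Existence:} your construction of $\phi$ does not produce a moment of $A$ at all. Since $u:U\rAct A$ and $f:A\rAct B$ are both active, the composite $fu:U\rAct B$ is active, so its inert part is an isomorphism and $(fu)_*(1_U)=1_B$; there is no nontrivial ``inert part of $fu$'' to read off, and in any case $(fu)_*(1_U)$ lives in $m_B$, not $m_A$. The essentially unique active morphism $u:U\rAct A$ is a red herring here. The correct construction drops $u$ entirely: the composite $\psi_{act}f:A\rAct V$ is an active morphism with \emph{unital target}, so unit-axiom (U2) gives it a unique inert section $V\rIn A$, and the associated moment $\phi$ of $A$ is elementary by definition. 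The inequality $\psi\leq f_*(\phi)$ then comes from Lemma \ref{retractive}: $f_*(\phi)_{act}$ is the pushout in $\CC_{act}$ of the retractive morphism $\phi_{act}=\psi_{act}f$ along $f$, and since $\psi_{act}f=\phi_{act}$ the universal property makes $\psi_{act}$ factor through $f_*(\phi)_{act}$, whence $\psi\leq f_*(\phi)$.

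\textbf{Uniqueness:} your argument is circular, as you half-suspected. The statement ``pushforward along an active morphism takes disjoint moments to disjoint moments'' is the first assertion of Proposition \ref{inducedpartition}, and its proof for active $f$ consists precisely of an appeal to Lemma \ref{active=surj}; you cannot use it here. The paper avoids this by working with pushouts in the active part (again Lemma \ref{retractive}): given $\psi\leq f_*(\phi)$ and $\psi\leq f_*(\phi')$, form the pushout $Q$ of $f_*(\phi)_{act}$ and $f_*(\phi')_{act}$ in $\CC_{act}$; then $\psi$ factors through $Q$, so $Q$ has an elementary moment. A diagram chase identifies $Q$ with the pushout along $f$ of the pushout $P$ of $\phi_{act}$ and $\phi'_{act}$, giving an active morphism $\overline{f}:P\rAct Q$. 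Composing $\overline{f}$ with the active part of the elementary moment of $Q$ yields an active morphism from $P$ to a unit, which by (U2) has an inert section, so $P$ itself has an elementary moment; this exhibits a common elementary submoment of $\phi$ and $\phi'$, and Lemma \ref{incomparable}(ii) then forces $\phi=\phi'$. Your reduction ``not disjoint $\Rightarrow$ equal'' via Lemma \ref{incomparable}(ii) is the right endgame, but the bridge from $\psi\leq f_*(\phi),\,\psi\leq f_*(\phi')$ to ``$\phi,\phi'$ not disjoint'' must be built from these pushouts, not from the disjointness-preservation statement you wanted to cite.
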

\begin{proof}The elementary moment $\psi$ factors through $\psi_{act}:B\rAct U$ where $U$ is a unit. By (U2) the composite active morphism $\psi_{act}f:A\rAct U$ has an inert section, thus defining an elementary moment $\phi$ of $A$. By Lemma \ref{retractive}, the moment $f_*(\phi)$ is associated with the pushout along $f$ of $\psi_{act}f$ in $\CC_{act}$, whence $\psi\leq f_*(\phi)$.

Assume we are given two elementary moments $\phi,\phi'$ of $A$ such that $\psi\leq f_*(\phi)$ and $\psi\leq f_*(\phi')$. In virtue of Lemma \ref{retractive}, $\psi$ is then an elementary moment of the pushout $Q$ of $f_*(\phi)_{act}$ and $f_*(\phi')_{act}$ in the active part of $\CC$:
 $$\xymatrix{&A\ar[ld]_{\phi_{act}}\ar[rd]^{\phi'_{act}}\ar[rrr]^f&&&B\ar[ld]_{f_*(\phi)_{act}}\ar[rd]^{f_*(\phi')_{act}}\ar@/^5ex/[rrdd]^{\psi_{act}}&&\\
U_1\ar[rd]&po&U_2\ar[ld]&B_1\ar[rd]&po&B_2\ar[ld]&\\&P\ar[rrr]^{\overline{f}}&&&Q\ar[rr]&&U}$$\noindent By a diagram chase, $Q$ can be identified with the pushout along $f$ of the pushout $P$ of $\phi_{act}$ and $\phi'_{act}$. We thus get an active morphism $\overline{f}:P\rAct Q$ whose target has an elementary moment. It follows then that $P$ has as well an elementary moment so that $\phi$ and $\phi'$ are not disjoint. According to Lemma \ref{incomparable}ii this implies $\phi=\phi'$.\end{proof}

\begin{prp}\label{inducedpartition}Pushforward along any morphism $f:A\to B$ takes disjoint moments of $A$ to disjoint moments of $B$. If $f$ is active, then $f_*$ induces a partition of the set of elementary moments of $B$, indexed by the elementary moments of $A$.\end{prp}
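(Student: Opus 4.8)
The plan is to establish the two assertions of Proposition~\ref{inducedpartition} separately, using Lemma~\ref{active=surj} as the main engine for the second (and sharper) statement, and Lemma~\ref{Greenorder}ii together with Lemma~\ref{incomparable}ii for the first.

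First I would prove that pushforward preserves disjointness. Let $\phi,\phi'\in m_A$ be disjoint moments, and suppose for contradiction that $f_*(\phi)$ and $f_*(\phi')$ share a common elementary submoment $\psi\in m_B$, i.e.\ $\psi\leq f_*(\phi)$ and $\psi\leq f_*(\phi')$. The key observation is that an elementary submoment of $\phi$ (resp.\ $\phi'$) is the same as an elementary submoment of $A$ below $\phi$ (resp.\ below $\phi'$), since $\leq$ is transitive; disjointness of $\phi,\phi'$ means there is no elementary $\alpha\in m_A$ with $\alpha\leq\phi$ and $\alpha\leq\phi'$. I would factor $f$ as $f_{in}f_{act}$; since pushforward is functorial (m3) and inert pushforwards do not change the ``combinatorial content'' of the moment lattice in the relevant way, it suffices to treat the active case. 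So assume $f:A\rAct B$ is active. Then Lemma~\ref{active=surj} applies: the elementary moment $\psi$ of $B$ determines a \emph{unique} elementary moment $\alpha$ of $A$ with $\psi\leq f_*(\alpha)$. I now want to show $\alpha\leq\phi$ and $\alpha\leq\phi'$, which contradicts disjointness. For this I would argue that $\psi\leq f_*(\phi)$ forces $\alpha\leq\phi$: write $\phi=\bigvee$ of its elementary submoments is not available in general, so instead I would use Lemma~\ref{Greenorder}ii, $f_*(\phi)=f_*(\phi\cdot\phi)\leq f_*(\phi)$, combined with the following sublemma — if $\psi$ is elementary and $\psi\leq f_*(\phi)$ with $f$ active, then the unique $\alpha$ from Lemma~\ref{active=surj} satisfies $\alpha\leq\phi$. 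This sublemma is proved by replacing $A$ by the object $A'$ over which $\phi$ splits: the restricted active morphism $f\phi_{in}:A'\rAct B'$ (in the notation of diagram~(\ref{diagram1})) has $f_*(\phi)$ as its associated moment on $B$, and applying Lemma~\ref{active=surj} to this restricted active morphism gives the unique elementary moment of $A'$, which pushes forward (via $\phi_{in}$, using Lemma~\ref{formal}(v)) to an elementary moment of $A$ below $\phi$; by the uniqueness clause of Lemma~\ref{active=surj} this must be $\alpha$. The same argument with $\phi'$ gives $\alpha\leq\phi'$, the desired contradiction.

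For the second assertion, assume $f:A\rAct B$ is active. Lemma~\ref{active=surj} gives, for each elementary moment $\psi$ of $B$, a unique elementary moment $\phi(\psi)$ of $A$ with $\psi\leq f_*(\phi(\psi))$; this defines a function from the set of elementary moments of $B$ to the set of elementary moments of $A$. The fibres of this function are by definition the blocks of a partition of the elementary moments of $B$ indexed by (a subset of) the elementary moments of $A$. To see that \emph{every} elementary moment $\alpha$ of $A$ indexes a nonempty block — so that we get a genuine partition indexed by all of them — I would argue as follows: $\alpha$ splits over a unit $U$ via $\alpha_{act}:A\rAct U$; form the active/inert factorisation, or rather consider $f_*(\alpha)\in m_B$, and split it; the object over which $f_*(\alpha)$ splits receives, by the construction in diagram~(\ref{diagram1}) applied to $\alpha$ and $f$, an active morphism from $U$, and one checks (using unitality, Definition~\ref{unitaldefinition}, and Lemma~\ref{incomparable}) that $f_*(\alpha)$ is nonzero, hence dominates at least one elementary moment $\psi$ of $B$; by the first assertion (disjointness is preserved) and the uniqueness in Lemma~\ref{active=surj}, any such $\psi$ has $\phi(\psi)=\alpha$. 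Thus the block indexed by $\alpha$ is nonempty, and we have a partition indexed exactly by the elementary moments of $A$.

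The main obstacle I anticipate is the bookkeeping in the sublemma ``$\psi\leq f_*(\phi)$ with $f$ active implies $\alpha\leq\phi$ for the $\alpha$ of Lemma~\ref{active=surj}'': one must correctly identify the restricted active morphism $f':A'\rAct B'$ coming out of the pushforward square~(\ref{diagram1}) for $\phi$, verify that applying Lemma~\ref{active=surj} to the composite $\psi_{act}\circ(\text{restriction of }f)$ yields an elementary moment of $A'$ whose image under $(\phi_{in})_*$ is simultaneously $\leq\phi$ and equal to $\alpha$, the latter by the uniqueness clause of Lemma~\ref{active=surj} applied back in $A$. The reduction to the active case at the start also needs a line of justification: an inert morphism $i:B\rIn C$ induces an injection on elementary moments compatible with pushforward (this follows from Lemma~\ref{formal}(v) and the relation $f_*(\phi)=(f\phi)_*(1)$ of Lemma~\ref{formal}(iii)), so disjointness transports along $i_*$; combined with (m3) this handles general $f$. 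Everything else is a routine assembly of Lemmas~\ref{formal}, \ref{Greenorder}, \ref{incomparable} and \ref{active=surj}.
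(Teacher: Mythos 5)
Your argument for the first assertion and for the existence of the partition is essentially the paper's: reduce to $f$ inert or active, use Lemma \ref{active=surj} to define the map $\psi\mapsto\phi(\psi)$ whose fibres are the blocks, and use Lemma \ref{incomparable}ii to pass from elementary moments to general disjoint moments. The ``sublemma'' you isolate (for $f$ active and $\psi$ elementary, $\psi\leq f_*(\phi)$ forces $\alpha\leq\phi$ for the unique $\alpha$ of Lemma \ref{active=surj}) is precisely the point the paper leaves implicit in the phrase ``this suffices by Lemma \ref{incomparable}ii'', and your proof of it is sound: restrict to the splitting object $A'$ of $\phi$, apply Lemma \ref{active=surj} to the active part $f':A'\rAct B'$ of $f\phi_{in}$, push the resulting elementary moment of $A'$ forward along $\phi_{in}$, and invoke uniqueness. (One bookkeeping item you should make explicit: $\psi\leq f_*(\phi)$ only gives, via Lemma \ref{Greenorder}i, an elementary moment of $B'$ to which Lemma \ref{active=surj} can be applied for $f'$; the morphism $f\phi_{in}$ itself is not active.)

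The one step that fails is the claim in your last paragraph that every block is nonempty, i.e.\ that $f_*(\alpha)$ ``is nonzero, hence dominates at least one elementary moment $\psi$ of $B$''. This is false: an active morphism may push an elementary moment forward to a moment splitting over a nilobject. Already in $\Delta$ the active map $[1]\rAct[0]$ sends the unique elementary moment of $[1]$ to $1_{[0]}$, which has no elementary submoments; similarly $\underline{1}\rAct\underline{0}$ in $\Gamma$. No argument from unitality can rescue this. Fortunately nothing is needed there: a ``partition indexed by $\el_A$'' in this paper admits empty blocks --- this is built into the definition of active morphisms of $\Gamma$ as $m$-tuples of pairwise disjoint, possibly empty, subsets covering $\nn$ --- so Lemma \ref{active=surj} (every elementary moment of $B$ lies in exactly one block) already yields the second assertion, and that final paragraph should simply be deleted.
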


\begin{proof}For the first assertion, we can assume that $f$ is either inert or active. Two moments are disjoint precisely when their associated inert subobjects do not share an inert subobject with unital domain $U$. If $f$ is inert, the pushforward operation $f_*$ associates to an inert subobject $\phi_{in}:A'\rIn A$ the inert subobject $\phi_{in}f:A'\rIn A\rIn B$, and hence pushforward along inert morphisms preserves disjointness. If $f$ is active, Lemma \ref{active=surj} shows that distinct elementary moments of $A$ are taken to disjoint moments of $B$. This suffices by Lemma \ref{incomparable}ii.

For the second assertion we associate with each elementary moment $e_\alpha$ of $A$ the set of elementary moments $e_\beta$ of $B$ such that $e_\beta\leq f_*(e_\alpha)$. According to Lemmas \ref{incomparable} and \ref{active=surj} this defines a partition of the set of elementary moments of $B$.\end{proof}

Let us illustrate Proposition \ref{inducedpartition} by means of the examples \ref{examples}a-b. In $\Gamma$, an active morphism $f:\mm\rAct\nn$ can be analysed by considering the pushforwards of the elementary moments of $\mm$. The latter correspond to singleton subsets of $\mm$ while their pushforwards along $f$ correspond to subsets of $\nn$. Two subsets are then ``disjoint'' in the sense of Definition \ref{unitaldefinition} precisely when they meet in the nilobject $\underline{0}$ which recovers the usual meaning of disjointness. According to Lemma \ref{active=surj}, the subsets of $\nn$ obtained as pushforwads along $f:\mm\rAct\,\nn$ of the singleton subsets of $\mm$ are mutually disjoint, and cover the singleton subsets of $\nn$.

In the same manner, an active morphism $f:[m]\rAct\,[n]$ in $\Delta$ can be analysed by considering the pushforwards of the elementary moments of $[m]$. The latter correspond to subsegments of $[m]$ while their pushforwards along $f$ correspond to subintervals of $[n]$. Two subintervals are then ``disjoint'' in the sense of Definition \ref{unitaldefinition} precisely when they meet in the nilobject $[0]$ (i.e. when they share just an extremity) or when they do not intersect. Again, according to Lemma \ref{active=surj}, the subintervals of $[n]$ obtained as pushforwards along $f:[m]\rAct\,[n]$ of the subsegments of $[m]$ are mutually disjoint, and cover the subsegments of $[n]$.\vspace{1ex}

In \cite{Se} Segal constructed a cardinality-preserving functor $\gamma_\Delta:\Delta\to\Gamma$ and based his infinite delooping machine on the existence of this functor. The following proposition shows that Segal's functor is actually a cardinality- and unit-preserving moment functor and up to isomorphism uniquely determined by this property.

\begin{prp}\label{augmentation}For each unital moment category of finite type $\,\CC$ there is an essentially unique cardinality- and unit- preserving moment functor $\,\gamma_\CC:\CC\to\Gamma$.\end{prp}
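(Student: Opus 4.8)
The plan is to construct $\gamma_\CC$ explicitly on objects and morphisms using only elementary moments and pushforwards, then verify functoriality, the moment-functor property, and cardinality/unit preservation, and finally prove uniqueness via the rigidity of $\Gamma$ together with axioms (U1)--(U2).

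\textbf{Construction.} For an object $A$ of $\CC$, set $\gamma_\CC(A) = \underline{\mathrm{card}(A)}$, the finite set of elementary moments of $A$; finiteness is guaranteed by the finite type hypothesis. For a morphism $f:A\to B$, I define the $\Gamma$-operator $\gamma_\CC(f):\gamma_\CC(A)\to\gamma_\CC(B)$ to send an elementary moment $\alpha$ of $A$ to the set of those elementary moments $\beta$ of $B$ with $\beta\leq f_*(\alpha)$. By Proposition~\ref{inducedpartition} (applied first to the inert part, then the active part of the essentially unique active/inert factorisation of $f$, and using that pushforward is functorial by (m3)), pushing forward takes disjoint moments to disjoint moments, so the tuple $(\gamma_\CC(f)(\alpha))_\alpha$ is a tuple of pairwise disjoint subsets of $\gamma_\CC(B)$ and hence a well-defined morphism of $\Gamma$. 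Functoriality $\gamma_\CC(gf)=\gamma_\CC(g)\gamma_\CC(f)$ follows from $(gf)_*=g_*f_*$ together with the order-preservation of pushforward (Lemma~\ref{Greenorder}ii): $\beta\leq g_*(f_*(\alpha))$ iff there is an elementary $\beta'$ of $B$ with $\beta\leq g_*(\beta')$ and $\beta'\leq f_*(\alpha)$, which is exactly the composition law of $\Gamma$. Here I use that distinct elementary moments of $B$ have disjoint images under $g_*$ (Lemma~\ref{active=surj}, in the form of Proposition~\ref{inducedpartition}) so that the $\beta'$ witnessing $\beta\leq g_*(f_*(\alpha))$ is unique.

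\textbf{Moment functor, cardinality, units.} That $\gamma_\CC$ preserves inert morphisms: if $f=i$ is inert with active retraction $r$, then $i_*$ embeds the elementary submoments of $A$ into those of $B$ (each $\alpha$ maps to a singleton $\{i_*(\alpha)\}$ since $i_*(\alpha)=i\alpha r$ is again split over a unit), so $\gamma_\CC(i)$ is inert in $\Gamma$. That $\gamma_\CC$ preserves active morphisms: if $f$ is active, Proposition~\ref{inducedpartition} says the sets $\gamma_\CC(f)(\alpha)$ form a \emph{partition} of $\gamma_\CC(B)$, which is precisely the condition that $\gamma_\CC(f)$ be active in $\Gamma$. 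Cardinality preservation is built into the definition of $\gamma_\CC$ on objects. For unit preservation: a unit $U$ of $\CC$ is primitive, so by Remark~\ref{LRB} its lattice $m_U/\!\simeq$ is $\{0,1\}$; its only elementary moment is $1_U$ (any proper elementary submoment would be non-centric but would also have to equal $1_U$ by Lemma~\ref{incomparable}i applied to the inert inclusion of that unit into $U$ — so $\mathrm{card}(U)=1$), hence $\gamma_\CC(U)=\underline{1}$, the unique unit of $\Gamma$.

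\textbf{Uniqueness.} Suppose $G:\CC\to\Gamma$ is another cardinality- and unit-preserving moment functor. On objects, cardinality preservation forces $G(A)\cong\gamma_\CC(A)$; I fix the iso by matching elementary moments. On morphisms, the key point is that an elementary moment $\alpha$ of $A$ with splitting $U\rightleftarrows A$ is a composite $i_\alpha r_\alpha$ of an inert and an (retr)active morphism, and any moment functor must send $i_\alpha$ to an inert, hence (since $G$ is unit- and cardinality-preserving) a \emph{singleton-inclusion} $\underline{1}\rIn G(A)$; this identifies the element of $G(A)$ named by $\alpha$. Then for any $f:A\to B$ the relation $f\alpha = f_*(\alpha)f$ (the corollary to the pushforward construction, equivalently (m4)) together with $G$ being a functor pins down $G(f)$ applied to (the element named by) $\alpha$ to be exactly $\gamma_\CC(f)(\alpha)$, because in $\Gamma$ a morphism is determined by where it sends the singleton inert subobjects and how those land relative to the singleton inert subobjects of the target. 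The only subtlety is that $G(f)$ a priori could land an element of $G(A)$ in a subset rather than tracking $f_*$; but $f_*(\alpha)$ is itself a moment whose elementary submoments are read off by $\gamma_\CC$, and the moment-functor condition $G(f_*(\alpha)) = G(f)_*(G(\alpha))$ forces agreement. Hence $G\cong\gamma_\CC$ as moment functors.

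\textbf{Main obstacle.} The delicate step is the well-definedness and functoriality on \emph{arbitrary} (not just active or inert) morphisms: one must check that factoring $f=f_{in}f_{act}$ and composing the $\Gamma$-images is independent of the (only essentially unique) factorisation, and that the partition/disjointness bookkeeping is consistent under composition. This is where Lemma~\ref{active=surj} does the real work — it is the statement that, along an active morphism, every elementary moment of the target has a \emph{unique} elementary moment of the source lying "over" it, which is exactly what makes the composition law of $\Gamma$ match $(gf)_* = g_*f_*$. I expect the rest to be routine diagram-chasing with the axioms (m1)--(m4) and Lemmas~\ref{formal}, \ref{Greenorder}, \ref{incomparable}.
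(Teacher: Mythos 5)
Your proposal is correct and follows essentially the same route as the paper: define $\gamma_\CC$ on objects by cardinality and on morphisms by sending each elementary moment $\alpha$ to the set of elementary submoments of $f_*(\alpha)$, using Proposition~\ref{inducedpartition} for well-definedness and preservation of active morphisms, and deducing uniqueness from the fact that a moment functor must preserve elementary inert subobjects and pushforwards once a bijection on elements is fixed. Your treatment of functoriality via Lemma~\ref{active=surj} is in fact more detailed than the paper's (which dismisses it as "easily seen"), but it is the same argument.
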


\begin{proof}Each object of $\CC$ has only finitely many elementary moments. This determines $\gamma_\CC$ on objects since the category $\Gamma$ has precisely one object for each finite cardinal. In particular, $\gamma_\CC$ is unit-preserving since units have cardinality $1$ by (U1).

In order to define the functor $\gamma_\CC$ on morphisms, we fix its image for each inert morphism $U\rIn A$ with unital domain in such a way that distinct elementary subobjects have distinct images in $\Gamma$. This amounts to fixing a bijection between the elementary moments of $A$ and the elementary moments of $\gamma_\CC(A)$, i.e. to a total ordering of the elementary moments of $A$. Each moment $\phi$ of $A$ determines then a subset of $\gamma_\CC(A)$, namely the one which corresponds to the elementary submoments of $\phi$. By Proposition \ref{inducedpartition}, pushforward along $f:A\to B$ takes distinct elementary moments of $A$ to disjoint moments of $B$. We therefore get a well-defined map $\gamma_\CC(f):\gamma_\CC(A)\to\gamma_\CC(B)$ in $\Gamma$, and this assignment is easily seen to be functorial. Lemma \ref{active=surj} shows that $\gamma_\CC$ takes active morphisms to active morphisms. Since pushforward along inert morphisms faithfully preserves elementary moments, $\gamma_\CC$ takes inert morphisms to inert morphisms so that $\gamma_\CC$ is indeed a moment functor.

Conversely, any functor of moment categories $\gamma_\CC:\CC\to\Gamma$ takes moments of $A$ to moments of $\gamma_\CC(A)$, and pushforward operation $f_*:m_A\to m_B$ to pushforward operation $\gamma_\CC(f)_*:m_{\gamma_\CC(A)}\to m_{\gamma_\CC(B)}$. Therefore, once a bijection between the elementary moments of $A$ and $\gamma_\CC(A)$ is fixed, there is no choice in defining $\gamma_\CC$. Different choices of bijections lead to canonically isomorphic augmentations.\end{proof}

\begin{dfn}\label{intrinsicwreath}Let $\CC,\DD$ be unital moment categories. The \emph{wreath product} $\CC\wr\DD$ is defined to be the category for which
\begin{itemize}\item objects are tuples $(A,B_\alpha)$ given by an object $A$ of $\,\CC$ and a family $(B_\alpha)_{\alpha\in\el_A}$ of objects of $\,\DD$ indexed by the set $\el_A$ of elementary moments of $A$;
\item morphisms are tuples $(f,f_\alpha^{\alpha'}):(A,B_\alpha)\to(A',B'_{\alpha'})$ given by a morphism $f:A\to A'$ and morphisms $f_\alpha^{\alpha'}:B_\alpha\to B_{\alpha'}$ whenever $\alpha'\leq f_*(\alpha)$ in $m_{A'}$.\end{itemize}\end{dfn}

\begin{prp}\label{unitalwreath}For unital moment categories $\CC,\DD$ the wreath product $\CC\wr\DD$ is a unital moment category. In particular, Joyal's $\Theta_n$ are unital moment categories.\end{prp}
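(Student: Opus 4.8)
The plan is to verify directly that the category $\CC\wr\DD$ of Definition \ref{intrinsicwreath} carries an active/inert factorisation system and then to check axioms (M1), (M2) together with the unitality requirement of Definition \ref{unitaldefinition}, rather than to reconstruct the augmented-over-$\Gamma$ description of \cite{Be2}. First I would declare a morphism $(f,f_\alpha^{\alpha'}):(A,B_\alpha)\to(A',B'_{\alpha'})$ to be \emph{active} if $f:A\rAct A'$ is active in $\CC$ and each component $f_\alpha^{\alpha'}:B_\alpha\rAct B'_{\alpha'}$ is active in $\DD$, and \emph{inert} if $f:A\rIn A'$ is inert in $\CC$ and each $f_\alpha^{\alpha'}$ is inert in $\DD$. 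Note that when $f$ is inert the partition of $\el_{A'}$ induced by $f_*$ (Proposition \ref{inducedpartition}) has all blocks of size $\leq 1$, so an inert morphism specifies exactly one component $f_\alpha^{\alpha'}$ per elementary moment $\alpha$ of $A$ sitting over $f_*(\alpha)$; this makes the bookkeeping for sections and retractions manageable. The factorisation of a general $(f,f_\alpha^{\alpha'})$ is then obtained componentwise: factor $f=f_{in}f_{act}$ in $\CC$ through an object $A''$, whose elementary moments biject (via pushforward along the inert part, Lemma \ref{active=surj} and Proposition \ref{inducedpartition}) with those of $A'$; place at the $\alpha''$-th spot the target $B''_{\alpha''}$ of the active part of the appropriate $\DD$-factorisation of $f_\alpha^{\alpha'}$; essential uniqueness then follows from essential uniqueness in $\CC$ and in $\DD$ slot by slot, using that the active parts are epimorphic enough (or that the orthogonality holds componentwise) to pin down the connecting isomorphism.

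Next I would check (M1): given an inert $(i,i_\alpha^{\alpha'}):(A,B_\alpha)\rIn(A',B'_{\alpha'})$, take the unique active retraction $r$ of $i$ in $\CC$ from (M1) for $\CC$, and for each $\alpha$ the unique active retraction $r_\alpha$ of $i_\alpha^{\alpha'}$ in $\DD$; one must check $(r,r_\alpha)$ is a well-defined morphism $(A',B'_{\alpha'})\to(A,B_\alpha)$ of $\CC\wr\DD$, i.e. that the index conditions match up — this uses $r_*i_* = \mathrm{id}$ on elementary moments and the compatibility axiom (M2)$'$ in $\CC$ applied to the relevant active/inert squares. Uniqueness of this retraction in $\CC\wr\DD$ reduces to uniqueness in $\CC$ and in $\DD$. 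For (M2), a commuting relation $(f,f_\bullet)(i,i_\bullet)=(g,g_\bullet)$ with $i$ inert, $f,g$ active unwinds to $fi=g$ in $\CC$ and $f_\alpha^{?}i_\alpha^{\alpha'}=g_\alpha^{?}$ in $\DD$ at each slot; applying (M2) in $\CC$ gives $f=gr$, and applying (M2) in $\DD$ at each slot gives $f_\alpha^?=g_\alpha^? r_\alpha$, which assembles to $(f,f_\bullet)=(g,g_\bullet)(r,r_\bullet)$ as required.

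Finally I would establish unitality. The key point, already visible in Remark \ref{unitalexamples}(c), is that the units of $\CC\wr\DD$ are the pairs $(U,(V))$ where $U$ is a unit of $\CC$ and $V$ a unit of $\DD$ (indexed by the single elementary moment of $U$, which is $1_U$ since $U$ is primitive of cardinality one). I would check (U1): the moments of $(U,(V))$ are pairs $(\phi,\phi_V)$ with $\phi\in m_U$, $\phi_V\in m_V$ a moment over the slot determined by $\phi$; the centric ones force $\phi$ centric hence $\phi=1_U$ (by (U1) for $\CC$) and $\phi_V$ centric hence $\phi_V=1_V$ (by (U1) for $\DD$), so $(U,(V))$ is primitive; and (U2): an active morphism into $(U,(V))$ has underlying active $A\rAct U$ which by (U2) for $\CC$ has a unique inert section, forcing $A$ to be a unit of $\CC$, and then the single $\DD$-component is an active morphism into $V$ with a unique inert section by (U2) for $\DD$. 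For the existence-and-essential-uniqueness clause, given $(A,B_\alpha)$ pick the essentially unique $U\rAct A$ for $\CC$ and, for the distinguished index $\alpha_0$ picked out by this map (the one with $\alpha_0\leq(U\rAct A)_*(1_U)$ — there is exactly one by Lemma \ref{active=surj}), the essentially unique unit $V\rAct B_{\alpha_0}$ for $\DD$; this gives an active $(U,(V))\rAct(A,B_\alpha)$, and uniqueness propagates from the two factors. The main obstacle I anticipate is the verification that the componentwise factorisation genuinely satisfies the \emph{essential uniqueness} (orthogonality) condition in $\CC\wr\DD$: one must be careful that a morphism of the wreath product is recorded by a single $\DD$-component per source elementary moment, and that the index set over which the $\DD$-data lives is itself being transported along the $\CC$-part of the factorisation in a way compatible with Proposition \ref{inducedpartition}; once this indexing is correctly set up everything else is a slotwise application of the corresponding facts for $\CC$ and $\DD$. (An alternative, possibly cleaner route would be to present $\CC\wr\DD$ via the moment-structure axioms (m1)--(m4) of Definition \ref{formaldfn}, defining $m_{(A,B_\alpha)}$ and pushforward componentwise and then invoking Proposition \ref{formalsplit}; the splitting of moments is then immediate from the splittings in $\CC$ and $\DD$, and the bulk of the work again concentrates on the bookkeeping of the index families.)
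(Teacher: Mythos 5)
Your overall strategy — componentwise active/inert classes, componentwise factorisation with the middle index set transported via Proposition \ref{inducedpartition}, and slotwise verification of (M1), (M2) and the unit axioms — is exactly the route the paper takes, and the factorisation and (M1)/(M2) parts are essentially correct. (One imprecision there: the elementary moments of the middle object $A''$ do not biject with those of $A'$; they inject via $(f_{in})_*$, with image precisely the set of $\alpha'$ for which a component $f_\alpha^{\alpha'}$ is prescribed, and it is this restricted bijection that lets you re-index the $\DD$-factorisations.)

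There is, however, a genuine error in your unitality verification. For an active morphism $(g,g_\bullet):(U,(V))\rAct(A,(B_\alpha))$ the components $g_{1_U}^{\alpha}$ are required whenever $\alpha\leq g_*(1_U)$; since $g$ is active, $g_*(1_U)=1_A$, so \emph{every} $\alpha\in\el_A$ satisfies this condition, and the morphism must carry an active component $V\rAct B_\alpha$ for each elementary moment $\alpha$ of $A$. Your claim that Lemma \ref{active=surj} singles out one distinguished index $\alpha_0$ applies that lemma in the wrong direction (it produces, for each elementary moment of the \emph{target}, a unique elementary moment of the \emph{source} dominating it — here the source $U$ has only $1_U$, so the lemma gives no restriction on $\alpha$). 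As written, your candidate morphism $(U,(V))\rAct(A,(B_\alpha))$ is missing all components except one and is not a morphism of $\CC\wr\DD$; the correct argument takes, for each $\alpha\in\el_A$, the essentially unique active $V\rAct B_\alpha$ supplied by unitality of $\DD$, and essential uniqueness of the whole tuple then follows slotwise. A smaller slip in the same passage: an active morphism $A\rAct U$ into a unit need not force $A$ to be a unit (e.g. $[2]\rAct[1]$ in $\Delta$); fortunately your (U2) check does not actually need that claim, since by Lemma \ref{active=surj} a morphism into $(U,V)$ has exactly one $\DD$-component, which lands in the unit $V$ of $\DD$ regardless of what $A$ is.
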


\begin{proof}We first define an active/inert factorisation system in $\CC\wr\DD$ where a morphism $(f,f_\alpha^{\alpha'})$ is active (resp. inert) if $f$ is active (resp. inert) in $\CC$ and all $f_\alpha^{\alpha'}$ are active (resp. inert) in $\DD$. For a given $(f,f_\alpha^{\alpha'}):(A,B_\alpha)\to(A',B'_{\alpha'})$ we factor $f$ as $$A\overset{f_{act}}{\rAct}\tilde{A}\overset{f_{in}}{\rIn}A'$$and observe that for each $f_{\alpha}^{\alpha'}:B_\alpha\to B_{\alpha'}$ with $\alpha'\leq f_*(\alpha)$ there is a unique $\tilde{\alpha}\in\el_{\tilde{A}}$ such that $(f_{in})_*(\tilde{\alpha})=\alpha'$. Indeed, by Proposition \ref{inducedpartition} $(f_{act})_*$ induces a partition of $\el_{\tilde{A}}$ indexed by $\el_A$ so that each $\alpha'\leq (f_{in})_*(f_{act})_*(\alpha)$ is the pushforward of a uniquely determined elementary submoment $\tilde{\alpha}\leq(f_{act})_*(\alpha)$. Factor now $f_\alpha^{\alpha'}$ as$$B_\alpha\overset{(f_\alpha^{\alpha'})_{act}}{\rAct}\tilde{B}\overset{(f_\alpha^{\alpha'})_{in}}{\rIn}B'_{\alpha'}$$and index the middle object by $\tilde{\alpha}\in\el_{\tilde{A}}$. This defines an active/inert factorisation of $(f,f_\alpha^{\alpha'})$ which is essentially unique by the essential uniqueness of the active/inert factorisation systems in $\CC$ and $\DD$. Axioms (M1) and (M2) for the wreath product $\CC\wr\DD$ follow from their validity in $\CC$ and $\DD$.

By axiom (U1) units have a single elementary moment, the identity. Therefore, units of $\CC\wr\DD$ are of the form $(U,V)$ where $U$ (resp. $V$) is a unit of $\CC$ (resp. $\DD$). Axiom (U2) for $(U,V)$ is a consequence of its validity in $\CC$ (for $U$) and in $\DD$ (for $V$). Finally, each object $(A,B_\alpha)$ of $\CC\wr\DD$ receives an essentially unique active morphism $(U,V)\rAct(A,B_\alpha)$.

Joyal's categories $\Theta_n$ are iterated wreath products $\Delta\wr\cdots\wr\Delta$ according to \cite[Theorem 3.7]{Be2}. The wreath product $\Delta\wr-$ of \cite[Definition 3.1]{Be2} coincides with Definition \ref{intrinsicwreath} using the unital moment structure of $\Delta$, cf. Remark \ref{unitalexamples}b.\end{proof}

\begin{rmk}\label{internalwreath}The augmentation $\gamma_{\Theta_n}:\Theta_n\to\Gamma$ of Proposition \ref{augmentation} coincides up to isomorphism with the functor $\gamma_n:\Theta_n\to\Gamma$ of \cite[Definition 3.3]{Be2}.

The inductive definition of $\gamma_n$ involves a so-called \emph{assembly functor} $\alpha:\Gamma\wr\Gamma\to\Gamma$ which takes $(\nn;\mm_1,\dots,\mm_n)$ to $\underline{m_1+\cdots+m_n}$, cf. \cite[Lemma 3.2]{Be2} and Remark \ref{unitalexamples}a. This assembly functor induces \emph{wreath products} of automorphisms in $\Gamma$ entering into the formulation of the equivariance properties of a symmetric operad.

To be more precise, for $(\sigma;\tau_1,\dots,\tau_n)\in\Aut(\nn)\times\Aut(\mm_1)\times\cdots\times\Aut(\mm_n)$ we denote the image under $\alpha:\Gamma\wr\Gamma\to\Gamma$ by $\sigma\wr\tau_i\in\Aut(\underline{m_1+\cdots+m_n})$. Explicitly, this internal wreath product $\sigma\wr\tau_i$ is obtained by postcomposing the direct sum $\tau_1\oplus\cdots\oplus\tau_n$ with the obvious ``block permutation'' induced by $\sigma$.

Thanks to Proposition \ref{augmentation}, \emph{internal wreath products} can be defined for any unital moment category $\CC$. Let $f:A\rAct B$ be an active map in $\CC$ and choose splittings $B_\alpha\rIn B$ for the moments $f_*(\alpha)\in m_B$ obtained by pushing forward the elementary moments $\alpha\in\el_A$. For $(\sigma;\tau_\alpha)\in\Aut(A)\times\prod_{\alpha\in\el_A}\Aut(B_\alpha)$ we shall denote $\sigma\wr\tau_\alpha$ any automorphism of $B$ taken under $\gamma_\CC:\CC\to\Gamma$ to the wreath product $\gamma_\CC(\sigma)\wr\gamma_\CC(\tau_\alpha)$ in $\Gamma$. We shall not assume that these internal wreath products in $\CC$ exist nor that they are unique if they exist, but this will often be the case.\end{rmk}

\subsection{Operads}\label{operadsection}

In this section we shall introduce, for each unital moment category of finite type $\CC$, a notion of $\CC$-operad in an arbitrary symmetric monoidal category $\EE$. The unit of $\EE$ will be denoted $I_\EE$, and as usual, for a family $(E_i)_{i\in I}$ of objects in $\EE$ indexed by a finite set $I$, we shall denote $\bigotimes_{i\in I}E_i$ the tensor product of the $E_i$ where we do not choose any order on the tensor factors. This does not cause any harm because of Mac Lane's \emph{Coherence Theorem} for symmetric monoidal categories.

\begin{dfn}\label{operaddefinition}Let $\CC$ be a unital moment category of finite type.

We choose for each elementary moment $\alpha\in\el_A$, a splitting $U_\alpha\rIn A$, i.e. an \emph{elementary inert monomorphism}. The active/inert factorisation system induces then for each active morphism $f:A\rAct B$ inert morphisms $B_\alpha\rIn B$ splitting the moments $f_*(\alpha)\in m_B$ for $\alpha\in\el_A$. These splittings are normalised in the following sense: if $f_*(\alpha)=1_B$ then $B_\alpha=B$ and $B_\alpha\rIn B$ is the identity of $B$.

A \emph{$\CC$-collection} in $\EE$ consists of a functor $\Oo:\Iso(\CC)\to\EE$. This yields for each object $A$ of $\CC$ an $\Aut(A)$-object $\Oo(A)$ of $\,\EE$, and for each active morphism $f:A\rAct B$ in $\,\CC$, an object $\Oo(f)=\bigotimes_{\al\in\el_A}\Oo(B_\alpha)$ of $\,\EE$ endowed with a canonical $\prod_{\alpha\in\el_A}\Aut(B_\alpha)$-action.\vspace{1ex}

A \emph{$\CC$-operad} in $\,\EE$ is a $\CC$-collection in $\EE$ equipped with structure maps\begin{itemize}\item $\eta_U:I_\EE\to \Oo(U)$ (one for each unit $\,U$ of $\,\CC$)\item $\mu_f:\Oo(A)\otimes\Oo(f)\to\Oo(B)$ (one for each $f:A\rAct B$ in $\,\CC_{act}$)\end{itemize} such that the following unit, associativity and equivariance axioms hold:

\begin{itemize}\item[(i)]for each $f:U\rAct A$ with unital domain, the composite morphism$$I_\EE\otimes\Oo(A)\overset{\eta_U\otimes 1}{\longrightarrow} \Oo(U)\otimes \Oo(f)\overset{\mu_f}{\longrightarrow} \Oo(A)$$ is a left unit constraint in $\EE$;

\noindent for each identity $1:A\rAct A$, the composite morphism$$\Oo(A)\otimes \bigotimes_{\alpha\in\el_A}I_\EE\overset{1\otimes\bigotimes\eta_{U_\alpha}}{\longrightarrow} \Oo(A)\otimes\bigotimes_{\alpha\in\el_A} \Oo(U_\alpha)\overset{\mu_1}{\longrightarrow} \Oo(A)$$ is a right unit constraint in $\EE$;

\item[(ii)]for each pair $A\overset{f}{\rAct}B\overset{g}{\rAct}C$, the following diagram commutes:\begin{diagram}[small]\Oo(A)\otimes\Oo(f)\otimes\Oo(g)&\rTo^{1\otimes \mu_{f,g}}&\Oo(A)\otimes\Oo(gf)\\
\dTo^{\mu_f\otimes 1}&&\dTo_{\mu_{gf}}\\\Oo(B)\otimes\Oo(g)&\rTo_{\mu_g}&\Oo(C)\end{diagram}

\noindent where $\mu_{f,g}:\Oo(f)\otimes\Oo(g)\to\Oo(gf)$ is obtained (after permuting factors) as tensor product of the maps $\mu_{g_\alpha}:\Oo(B_\alpha)\otimes\Oo(g_\alpha)\to\Oo(C_\alpha),\,\alpha\in\el_A,$ induced by the following commutative diagram (cf. Proposition \ref{inducedpartition}):\begin{diagram}[small]A&\rAct^f&B&\rAct^g&C\\\uIn&&\uIn&&\uIn\\U_\alpha&\rAct_{f_\alpha}&B_\alpha&\rAct_{g_\alpha}&C_\alpha\end{diagram}

\item[(iii)]for each $f:A\rAct B$ and $(\sigma,\tau_\alpha)\in\Aut(A)\times\prod_{\alpha\in\el_A}\Aut(B_\alpha)$ admitting a wreath product $\sg\wr\tau_\alpha\in\Aut(B)$, cf. Remark \ref{internalwreath}, the following diagram \begin{diagram}[small]\Oo(A)\otimes\Oo(f)&\rTo^{\mu_f}&\Oo(B)\\\dTo^{\Oo(\sigma)\otimes\bigotimes_\alpha\Oo(\tau_\alpha)}&&
    \dTo_{\Oo(\sg\wr\tau_\alpha)}\\\Oo(A)\otimes\Oo(f)&\rTo_{\mu_{f\sigma^{-1}}}&\Oo(B)\end{diagram}commutes.\end{itemize}

The $\CC$-operads in $\EE$ form a category $\Oper_\CC(\EE)$ whose morphisms are maps of $\CC$-collections commuting with the structure maps $\eta_U$ and $\mu_f$. There is a restriction functor $\gamma_\CC^*:\Oper_\Gamma(\EE)\to\Oper_\CC(\EE)$. Its left adjoint $(\gamma_\CC)_!:\Oper_\CC(\EE)\to\Oper_\Gamma(\EE)$ is called \emph{symmetrisation}, cf. Remark \ref{operadexamples}c below.\end{dfn}

\begin{rmk}\label{operadexamples}Let us review the Examples \ref{examples}, see also Remark \ref{unitalexamples}:\vspace{1ex}

(a) A $\Gamma$-collection $\Oo$ is a collection of objects $\Oo_n$ endowed with $\Aut(\nn)=\Sigma_n$-actions. These are often called symmetric collections or symmetric sequences. A $\Gamma$-operad is then precisely a \emph{symmetric operad} in the sense of May \cite{May} where the multiplicative structure $\mu_f:\Oo_n\otimes\Oo_{m_1}\otimes\cdots\otimes\Oo_{m_n}\to\Oo_m$ is associated to the active map $f:\nn\rAct\mm$ with partition $\mm=\coprod_{\alpha\in\el(\nn)}f_*(\alpha)$, cf. Proposition \ref{inducedpartition}.

Note that for $\CC=\Gamma$, our axioms (i), (ii) and (iii) correspond precisely to May's axioms (b), (a) and (c) in \cite[Definition 1.1]{May}, except that we drop $\Oo(\underline{0})=I_\EE$.\vspace{1ex}

(b) A $\Delta$-operad is a \emph{non-symmetric operad}. Our symmetrisation functor is the usual functor assigning a symmetric operad to a non-symmetric operad.\vspace{1ex}

(c) A $\Theta_n$-collection is a collection of objects indexed by $n$-level trees. Note that there are no non-trivial automorphisms in $\Theta_n$. It turns out that a $\Theta_n$-operad is precisely a \emph{$(n-1)$-terminal $n$-operad} in the sense of Batanin \cite{Ba, Ba2}. This follows from the observation that Batanin's category of $n$-level trees $\Omega_n$ is the dual of $\Theta_n^{act}$ and that for $S\rAct T$ in $\Theta^{act}_n$, the induced partition of $T$ is indexed by the \emph{fibres} of the dual map in $\Omega_n$, cf. \cite[Definition 4.3]{Ba2}. Our symmetrisation functor coincides with Batanin's \cite[Section 8]{Ba2}. Its existence follows from Corollary \ref{extension} below.\end{rmk}

\subsection{Monoids}\label{monoidsection}

For each $\CC$-operad $\Oo$ in $\EE$ there is a natural notion of \emph{$\Oo$-algebra}. In this article, we will only be concerned with $I_\CC$-algebras where $I_\CC$ is the \emph{unit $\CC$-operad} defined by $I_\CC(A)=I_\EE$ for each object $A$ of $\CC$ with structure maps induced by the unit-constraints of $\EE$. Because of their importance $I_\CC$-algebras will simply be called \emph{$\CC$-monoids}. We now make explicit what $\CC$-monoids are without referring to the definition of $\Oo$-algebras for a general $\CC$-operad $\Oo$.

A \emph{$\CC$-monoid} $X$ assigns to each unit $U$ of $\CC$ an object $X(U)$ of $\EE$, and to each object $A$ the tensor product $X(A)=\bigotimes_{\alpha\in\el_A}X(U_\alpha)$ where the $U_\alpha$ are determined by the splittings $U_\alpha\rIn A$. The empty tensor product denotes the unit $I_\EE$ of $\EE$.

To each active morphism $f:A\rAct B$, the $\CC$-monoid assigns the tensor product $X(f)=\bigotimes_{\alpha\in\el(A)}X(f_\alpha)$ where $X(f_\alpha):X(B_\alpha)\to X(U_\alpha)$ is the value of $X$ at $f_\alpha:U_\alpha\rAct B_\alpha$, the latter being defined by the commutative square\begin{diagram}[small]A&\rAct^f&B\\\uIn&&\uIn\\U_\alpha&\rAct _{f_\alpha}&B_\alpha\end{diagram} These data must define a contravariant functor $X:\CC_{act}^\op\to\EE$, i.e. for each composable pair of active morphisms $A\rAct B\rAct C$ we must have $X(gf)=X(f)X(g)$ in $\EE$. In a similar manner as in Definition \ref{operaddefinition}ii, we identify $X(B)$ with the tensor product $\bigotimes_{\al\in\el_A}X(B_\alpha)$ which allows us to identify the target of $X(g)$ with $X(B)$. In other words, $\CC$-monoids are ``special'' presheaves on $\CC_{act}$, determined in a precise way by their values at units and at active morphisms with unital domain.

In particular $\Delta$-monoids (resp. $\Gamma$-monoids) in $\EE$ are precisely associative (resp. commutative) monoids in $\EE$ because $\Delta_{act}^\op$ (resp. $\Gamma_{act}^\op$) is the PRO (resp. PROP) for associative (resp. commutative) monoids, cf. Joyal \cite{J} and Mac Lane \cite{MacL}.

A $\Theta_2$-monoid has two multiplicative structures (induced by the two $2$-level trees of cardinality $2$) sharing the same unit and distributing over each other. By the Eckmann-Hilton argument $\Theta_2$-monoids are equivalent to commutative monoids, as are $\Theta_n$-monoids for any $n\geq 2$. Below, $\Theta_n$-monoids will simply be called \emph{$n$-monoids}.

There is a weakening of the notion of $\CC$-monoid in $\EE$ when $\EE$ is the category $\Sets^{\Delta^\op}$ of \emph{simplicial sets}. This weakening is based on the good behaviour of \emph{weak equivalences} (i.e. those simplicial maps whose topological realisation is a weak homotopy equivalence), including their stability under product and $2$-out-of-$3$, as well as the existence of a product preserving functor $\pi_0:\Sets^{\Delta^\op}\to\Sets$ taking weak equivalences to bijections. Moreover, it is crucial that the category of simplicial sets is \emph{cartesian closed}. Cartesian closedness implies that simplicial $\CC$-monoids arise from simplicial presheaves $\CC^\op\to\Sets^{\Delta^{\op}}$ provided the latter satisfy \emph{strict Segal conditions}. This leads to the following definition, see also Section \ref{strongunitality} below.

\begin{dfn}\label{Segalcondition}A \emph{$\CC_\infty$-monoid} for a unital (hyper)moment category $\CC$ is a simplicial presheaf $X:\CC^{\op}\to\Sets^{\Delta^\op}$ such that
\begin{itemize}\item[(i)]for nilobjects $A$, the value $X(A)$ is terminal;\item[(ii)]for non-nilobjects $A$ with representative set of elementary inert morphisms $A_\alpha\rIn A$, the map $s_A:X(A)\to\prod_{\al\in\el_A}X(A_\alpha)$ is a weak equivalence.\end{itemize}\end{dfn}

\noindent A simplicial presheaf on $\CC$ satisfying just (i) will be called \emph{reduced}.

If $\CC=\Delta$ or $\CC=\Gamma$ these structure maps $s_A$ are known as \emph{Segal maps} since they were introduced by Segal in \cite{Se}. We shall use the same terminology for \emph{any} unital (hyper)moment category $\CC$.

A $\CC_\infty$-monoid yields a simplicial $\CC$-monoid provided all Segal maps are invertible because in this case the naturality of the Segal maps induces for each active map $f:A\rAct B$ the components $X(f)=\prod_{\al\in\el(A)}X(f_\alpha)$ of a $\CC$-monoid. In particular, the set of path components of a $\CC_\infty$-monoid has the structure of $\CC$-monoid. For $\CC=\Delta,\Gamma,\Theta_n$ a $\CC_\infty$-monoid $X$ is called \emph{group-like} if $\pi_0(X)$ is a group.

\begin{exms}\label{table}The following table illustrates the definitions of this section. To the special cases studied so far are added the \emph{dendroidal category} $\Omega$ of Moerdijk-Weiss \cite{MW} and the \emph{graphoidal category} $\Gamma_\updownarrow$ of Hackney-Robertson-Yau \cite{HRY}, cf. Section \ref{ix} below, as well as the \emph{graphical category} $\UU$ for higher modular operads, introduced by the same authors \cite{HRY2}. These are unital \emph{hyper}moment categories (introduced in Section \ref{hyper} below) to which all definitions of this section apply (with a convenient adaptation of the group-like condition).

\begin{center}\begin{tabular}{|c|c|c|c|c|}
\hline $\CC$ & $\CC$-operad & $\CC$-monoid & $\CC_\infty$-monoid & group-like $\CC_\infty$-monoid\\
\hline $\Gamma$ & sym. operad & comm. monoid & $E_\infty$-space & infinite loop space\\
\hline $\Delta$& non-sym. operad & assoc. monoid & $A_\infty$-space & loop space\\
\hline $\Theta_n$&$n$-operad & $n$-monoid & $E_n$-space&$n$-fold loop space\\
\hline $\Omega$ & tree-hyperoperad & sym. operad & $\infty$-operad& (stable $\infty$-operad)\\
\hline $\Gamma_{\updownarrow}$ & directed hyperoperad & properad & $\infty$-properad& (stable $\infty$-properad)\\
\hline $\UU$ & hyperoperad & modular operad & $\infty$-modular op. & (stable $\infty$-modular op.)\\
\hline\end{tabular}\end{center}\vspace{1ex}

The last two columns should be interpreted as follows: there are two \emph{Quillen model structures} on the category of reduced simplicial presheaves on $\CC$ such that the fibrant objects are respectively $\CC_\infty$-monoids and grouplike $\CC_\infty$-monoids, and the homotopy category is equivalent to the homotopy category of the claimed objects.

For $\Delta,\Gamma$ these results go back to Segal \cite{Se} and Bousfield-Friedlander \cite{BF}.
For $\Theta_n$ the $E_n$-model structure may be obtained by restricting Rezk's model structure for weak $n$-categories \cite{R} to reduced simplicial presheaves, while the model structure for $n$-fold loop spaces is described in \cite{Be2}.

The last three rows are more involved because $\CC$-monoids have a more complicated structure here. The Segal model structure for $\infty$-operads has been described by Cisinski-Moerdijk \cite{CM} (restricting it to reduced simplicial presheaves). Related model categories have been investigated by Barwick \cite{Ba} and Chu-Haugseng-Heuts \cite{CHH}. The model structure for $\infty$-properads can be obtained by restricting the model structure of Hackney-Robertson-Yau \cite{HRY1}. The last row follows from recent work of the same authors \cite{HRY2}, see also \cite[Example 6.2]{H}. The resulting notion of $\UU$-operad is close to what Getzler and Kapranov call a \emph{hyperoperad} in \cite[Section 4]{GK} disregarding any genus labeling. The role played in \cite{HRY2} by the nilobjects (free-living edge and nodeless loop) in $\UU$ differs slightly from our reducedness condition.

It would of course be desirable to have a uniform proof of existence for these model structures. The key idea is to homotopically invert the Segal maps starting from a projective (or injective) model structure on the category of reduced simplicial presheaves on $\CC$. We hope to come back to this topic in a future paper.\end{exms}

\section{Hypermoment categories}\label{hyper}

This section introduces \emph{hypermoment} categories, which are more general than moment categories. A hypermoment category comes equipped with an active/inert factorisation system where there is no correspondence between inert subobjects and moments, but just an augmentation $\gamma_\CC:\CC\to\Gamma$ compatible with the active/inert factorisation systems. This induces a well-behaved notion of \emph{cardinality} for the objects of $\CC$ and is enough to define $\CC$-operads and $\CC$-monoids like in Section \ref{operadsection}. We then define a \emph{plus construction} $\CC^+$ for unital hypermoment categories $\CC$ with the characteristic property that $\CC$-operads get identified with $\CC^+$-monoids.

Like in the original plus construction of Baez-Dolan \cite{BD} this is achieved by taking ``basic operators'' (i.e. active morphisms with unital domain) in $\CC$ to ``types'' (i.e. units) in $\CC^+$, and ``reduction laws'' in $\CC$ to ``operators'' in $\CC^+$.

It turns out that $\CC^+$ can be constructed as a \emph{category of special elements} of the simplicial nerve of $\CC$. A closely related construction for operator categories has been considered by Barwick \cite{Ba} and further studied by Chu-Haugseng-Heuts \cite{CHH}. Our plus construction $\CC^+$ is in general different from theirs because it also depends on the inert part of $\CC$ inexistant in an operator category.

The dendroidal category of Moerdijk-Weiss \cite{MW} and the graphoidal category of Hackney-Robertson-Yau \cite{HRY} are examples of hypermoment categories which are not moment categories because both categories have inert morphisms without active retraction. Nonetheless both are augmented over $\Gamma$ by assigning to a dendrix (resp. graphix) its \emph{vertex set}. We will show that $\Omega$ contains the plus construction $\Gamma^+$ of Segal's category $\Gamma$ as a subcategory. This is the reason for which in Table \ref{table} symmetric operads appear twice: as $\Gamma$-operads and as $\Omega$-monoids.

We finally deduce from the existence of the plus construction $\CC^+$ a general monadicity result for $\CC$-operads, viewed as structures on $\CC$-collections. A crucial intermediate step is the construction of the free $\CC^+$-monoid generated by a $\CC$-collection. Here we need that $\CC$ is \emph{strongly extensional} and thus admits coherent pushouts of inert morphisms along active morphisms. These pushouts are used to define an abstract insertion of $\CC$-trees into vertices of $\CC$-trees.

\begin{dfn}\label{hypermoment}A \emph{hypermoment category} is a category $\CC$ equipped with an active/inert factorisation system and an augmentation $\gamma_\CC:\CC\to\Gamma$ such that\begin{itemize}\item[(i)]$\gamma_\CC$ preserves active (resp. inert) morphisms;\item[(ii)]$\gamma_\CC$ preserves cardinality in the following sense: for each object $A$ of $\,\CC$ and each element $\11\rIn\gamma_\CC(A)$, there is an essentially unique inert lift $U\rIn A$ in $\CC$ such that $U$ satisfies unit-axiom (U2) of Definition \ref{unitaldefinition}.\end{itemize}

A hypermoment category is called \emph{unital} if every object $A$ of $\CC$ receives an essentially unique active morphism $U\rAct A$ whose domain $U$ is a \emph{unit} of $\,\CC$, i.e. belongs to $\gamma_\CC^{-1}(\11)$ and satisfies unit-axiom (U2) of Definition \ref{unitaldefinition}.\end{dfn}

According to Proposition \ref{augmentation} each unital moment category $\CC$ of finite type admits an essentially unique augmentation $\gamma_\CC:\CC\to\Gamma$ turning it into a unital hypermoment category. The essential difference between the two notions is that in a hypermoment category there might exist inert morphisms \emph{without} active retraction. This implies that for a given object $A$ of a unital hypermoment category $\CC$ the domain of the (essentially unique) active morphism $U\rAct A$ might be different from the domains of inert morphisms $U_\alpha\rIn A$. This possibility is excluded in a unital moment category because composing $U\rAct A$ with the active retraction $A\rAct U_\alpha$ yields an active morphism $U\rAct U_\alpha$ between units which is necessarily invertible by Lemma \ref{incomparable}i. For instance, we will see below that for a dendrix $A$, the active map $U\rAct A$ encodes the leaves of the dendrix while an inert map $U_\alpha\rIn A$ encodes the edges incoming into vertex $\alpha$ so that $U\not\cong U_\alpha$ in general.

Despite of this extra-freedom available in a unital hypermoment category, the definitions of $\CC$-operad and $\CC$-monoid (cf. Sections \ref{operadsection} and \ref{monoidsection}) can be copied almost verbatim, provided we use instead of elementary moments \emph{elementary inert subobjects}. Indeed, the crucial pushforward operations of Section \ref{pushforward} apply to inert subobjects without any modification. In particular, from now on, \emph{we shall denote by $\el_A$ the set of elementary inert subobjects of $A$}, i.e. the set of isomorphism classes of inert morphisms with unital domain and fixed codomain $A$.

Before turning to examples we establish an important relationship between \emph{rigid} hypermoment categories and \emph{operadic} categories in the sense of Batanin-Markl \cite{BaM}.

A unital hypermoment category $\CC$ is said to be \emph{rigid} if every isomorphism is an automorphism, and every automorphism acts trivially (on the left) on active morphisms with unital domain. In particular, units are ``rigid objects'' insofar as they do not have any non-trivial automorphisms. For instance, the moment categories $\Gamma,\Delta$ and $\Theta_n$ of Examples \ref{examples} are rigid, but the hypermoment categories $\Omega$ and $\Gamma_\updownarrow$ treated in Section \ref{ix} below are not. Nonetheless, we shall see in Appendix \ref{rigidify} that there is a combinatorial way of rigidifying $\Omega$ and $\Gamma_\updownarrow$ so that the following proposition can be applied to their rigidifications.

\begin{prp}\label{operadic}The dual of the active part of a rigid hypermoment category has a canonical structure of operadic category in the sense of Batanin-Markl \cite{BaM}.\end{prp}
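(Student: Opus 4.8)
The plan is to verify directly the axioms of an operadic category from Batanin-Markl \cite{BaM}. Recall that an operadic category consists of a category $\Oo$ together with a "fibre functor" assigning to each morphism $f:A\to B$ and each object $i$ "in" $B$ a fibre object $f^{-1}(i)$, subject to coherence axioms governing cardinality, composition of fibres, and the interaction with a chosen terminal/unit object. Our candidate is $\Oo=(\CC_{act})^{\op}$, so that a morphism $B\to A$ in $\Oo$ is an active morphism $f:A\rAct B$ in $\CC$. The "cardinality" functor $\Oo\to\FinSet$ will be (the restriction of) the augmentation $\gamma_\CC:\CC\to\Gamma$, using that $\Gamma_{act}^{\op}\iso\FinSet$; rigidity is exactly what makes this strictly functorial rather than merely pseudo-functorial, since in a rigid hypermoment category the active/inert factorisation with unital domain is unique (no automorphisms act), so the elementary inert subobjects $\el_A$ are an honest set functorially attached to $A$ and $\gamma_\CC(A)$ may be taken to be $\el_A$ itself.

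First I would set up the fibres. Given an active morphism $f:A\rAct B$ in $\CC$ (a morphism $B\to A$ in $\Oo$) and an elementary inert subobject $\alpha\in\el_A$ (an element "in" $A$, i.e. in $\gamma_\Oo$ of the source), Proposition \ref{inducedpartition} gives a partition of $\el_B$ indexed by $\el_A$; the active morphism $f_\alpha:U_\alpha\rAct B_\alpha$ determined by the commutative square
\begin{diagram}[small]A&\rAct^f&B\\\uIn&&\uIn\\U_\alpha&\rAct_{f_\alpha}&B_\alpha\end{diagram}
is the desired fibre $f^{-1}(\alpha)$, an object of $\Oo$ whose cardinality is precisely the block of the partition corresponding to $\alpha$. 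Rigidity guarantees $B_\alpha\rIn B$ and hence $f_\alpha$ is genuinely unique (not just up to unique isomorphism), so the fibre assignment is strictly well defined. Next I would identify the chosen "trivial" object: the unit object of $\Oo$ is a unit $U$ of $\CC$, which by (U1)+(U2) has $\el_U$ a one-element set, i.e. cardinality $1$; and I would check that the unique active morphism $U\rAct A$ of Definition \ref{hypermoment} provides the required morphisms from the unit, with its fibres being the $U_\alpha$ — here one uses exactly the $\el_A$-indexed partition of the singleton $\el_U$... wait, rather the partition of $\el_A$ indexed by the singleton $\el_U$, giving back all of $A$, which is the correct normalisation.

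The bulk of the verification is the associativity/compatibility axiom for fibres: for a composable pair $A\overset{f}{\rAct}B\overset{g}{\rAct}C$ in $\CC_{act}$ and $\alpha\in\el_A$, one must exhibit a coherent identification between the fibre $(gf)^{-1}(\alpha)$ and an iterated construction built from $f^{-1}(\alpha)$ and the fibres of $g$ over the elementary moments lying under $f_*(\alpha)$. This is governed precisely by the commutative diagram
\begin{diagram}[small]A&\rAct^f&B&\rAct^g&C\\\uIn&&\uIn&&\uIn\\U_\alpha&\rAct_{f_\alpha}&B_\alpha&\rAct_{g_\alpha}&C_\alpha\end{diagram}
appearing in Definition \ref{operaddefinition}(ii), together with functoriality of pushforward ($(gf)_*=g_*f_*$) and the diagram-chase established in the proof of Lemma \ref{active=surj} showing that pushouts of active morphisms compose correctly. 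I expect this to be the main obstacle: one must check that all the structural isomorphisms Batanin-Markl require are not merely isomorphisms but identities (or at least satisfy the strict coherence pentagon/hexagon conditions), and this is exactly where rigidity is used a second time — it collapses the "essential uniqueness" of active/inert factorisations to honest uniqueness whenever a unit is involved, forcing the relevant comparison maps to be identities. The remaining axioms (compatibility of the cardinality functor with fibres, the unit fibre being trivial, naturality under isomorphisms — trivial here since the only isomorphisms are automorphisms acting trivially on the relevant data) follow routinely from Proposition \ref{augmentation}, Proposition \ref{inducedpartition}, and Lemma \ref{incomparable}. Finally I would spell out the translation dictionary matching our terminology to that of \cite{BaM} and record that the examples $\Gamma,\Delta,\Theta_n$ (and the rigidifications of $\Omega,\Gamma_\updownarrow,\UU$ from the appendix) thereby acquire operadic-category structures.
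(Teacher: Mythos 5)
Your proposal follows essentially the same route as the paper: the paper merely dualises first (verifying that $\CC_{act}$ is a \emph{co}operadic category, with cofibres $B_\alpha$ obtained from the active/inert factorisation of $U_\alpha\rIn A\rAct B$, the designated initial objects being the units, and the cardinality functor the restriction of $\gamma_\CC$), and its verification of the composition axioms is exactly your two-step factorisation argument, with rigidity supplying the strict uniqueness of active parts of morphisms with unital domain. Apart from a small terminological slip (the fibres of a morphism in $(\CC_{act})^{\op}$ are indexed by its \emph{target} there, i.e. the source in $\CC$, and the fibre of the unique $U\rAct A$ is $A$ itself, as you note after self-correcting), the argument matches the paper's.
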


\begin{proof}See Appendix \ref{proofoperadic}.\end{proof}

Roughly speaking, the pushforward operations of a rigid hypermoment category $\CC$ give rise to cofibre functors on $\CC_{act}$ which are dual to fibre functors on $\CC_{act}^\op$ as defined by Batanin-Markl \cite{BaM}. This restriction/dualisation process tends to render the categories $\CC_{act}^\op$ closer to \emph{set-theoretical} intuition because they are augmented over $\Gamma_{act}^\op$, a skeleton of the category of finite sets and maps between them. It is an interesting yet difficult problem to determine which operadic categories arise through this restriction/dualisation process from rigid hypermoment categories.

\begin{rmk}\label{symmetries}Our notion of $\CC$-operad for a rigid hypermoment category $\CC$ is almost the same as Batanin-Markl's notion of operad over the corresponding operadic category $\CC_{act}^\op$. The difference concerns equivariance which is missing in \cite{BaM}. Nonetheless, an operad over an operadic category may have symmetries induced by the operad multiplication. Our equivariance axiom amounts to requiring that these ``external'' symmetries coming from the operad multiplication coincide with the ``internal'' symmetries coming from the automorphisms of $\CC$. So, the principal difference is that in our approach $\CC$-operads are viewed as structures on collections with symmetries while in the approach of Batanin-Markl they are viewed as structures on collections without symmetries. Batanin shows in \cite[Proposition 3.1]{Ba2} that for symmetric operads both view points are equivalent.\end{rmk}

\subsection{Dendrices and graphices}\label{ix}

The \emph{dendroidal category} $\Omega$ has been introduced by Moerdijk-Weiss \cite{MW}, see also \cite{HM} for a recent presentation.

The objects of $\Omega$ (the \emph{dendrices}) are finite rooted trees, the morphisms of $\Omega$ are defined by viewing such trees as coloured symmetric operads, where the colours are the edges of the tree, and the operations are freely generated by the vertices of the tree. The morphisms of $\Omega$ are thus maps of coloured symmetric operads. The dendrices may have vertices with a single incident edge (i.e. \emph{stumps}) representing constant operations of the induced coloured symmetric operad. We refer to Appendix \ref{dendrixappendix} (especially Sections \ref{Gammavsdendrixinert} and \ref{Gammavsdendrixactive}) for a more rigorous definition.

The simplex category $\Delta$ may be identified with the full subcategory of $\Omega$ spanned by the linear trees without stumps. The moment structure of $\Delta$ extends to a hypermoment structure of $\Omega$ where $\gamma_\Omega:\Omega\to\Gamma$ takes a dendrix to its vertex set.

Mixing the active/inert factorisation system with the epi/mono factorisation system (cf. Kock \cite{K0}, \cite[Section 2.15]{K2}) induces a triple factorisation system for $\Omega$: each morphism can be written in an essentially unique way as a degeneracy operator followed by an inner face operator followed by an outer face operator.

Degeneracy operators (i.e. retractive morphisms) correspond to dropping vertices with exactly two incident edges. Outer face operators (i.e. \emph{inert} morphisms)  $S\rIn T$ can be viewed as \emph{embeddings}. Inner face operators (i.e. active monomorphisms) $S\rAct T$ are dual to inner edge contractions of $T$ and can be viewed as partitions of $T$ into subdendrices $T_\alpha\rIn T$ indexed by the vertex set of $S$. Alternatively, an inner face operator $S\rAct T$ can also be viewed as an \emph{insertion} of dendrices $T_\alpha$ into the vertices $\alpha$ of $S$ such that $T$ is the result of this insertion process. The \emph{active} part of $\Omega$ is generated by degeneracies and inner face operators.

In contrast to $\Delta$, the dendroidal category $\Omega$ has non-trivial symmetries, the dendrix automorphisms. Also, in contrast to $\Delta$, there are inert morphisms without active retraction, namely those $S\rIn T$ for which the complement of $S$ in $T$ does \emph{not} decompose into a coproduct of linear trees.

The dendroidal category has a single nilobject: the edge $|$ without vertices. The \emph{units} of $\Omega$ are precisely the corollas $C_n$ where $n$ is the number of leaves. Unit-axiom (U2) is satisfied because the source of a degeneracy $S\rAct C_n$ has a unique vertex mapping to the vertex of $C_n$ so that there is a uniquely determined inert section $C_n\rIn S$. Moreover, this property characterises corollas. Axiom (ii) of Definition \ref{hypermoment} says then that for each vertex $\alpha$ of a dendrix $T$ there is an essentially unique pair consisting of a corolla $C_{n(\alpha)}$ and an outer face operator $C_{n(\alpha)}\rIn T$, where $n(\alpha)$ is the valency of $\alpha$. The dendroidal category is \emph{unital} because for each dendrix $T$ there is also an essentially unique pair consisting of a corolla $C_{n(T)}$ and an inner face operator $C_{n(T)}\rAct T$ where $n(T)$ is the number of leaves of $T$.

It can now be checked that $\Omega$-operads in the sense of Definition \ref{operaddefinition} are \emph{tree hyperoperads} in the spirit of Getzler-Kapranov's hyperoperads \cite{GK}. This analogy motivated our terminology of hypermoment category. It can also be checked by hand that $\Omega$-monoids are (single-coloured) symmetric operads. An early account of this last equivalence can be found in Ginzburg-Kapranov \cite[Section 1.2]{GiK}.

Hackney, Robertson and Yau \cite{HRY} further embed the dendroidal category $\Omega$ into a \emph{graphoidal category} $\Gamma_{\updownarrow}$. Its objects (the \emph{graphices}) are finite connected graphs with \emph{directed} edges and \emph{directed} leaves so that there are \emph{no oriented edge-cycles} in the graph. Each dendrix defines a graphix by directing all edges towards the root.

The morphisms of $\Gamma_{\updownarrow}$ are definable like the morphisms of $\Omega$, cf. Chu-Hackney \cite[Section 2.2]{CHa} and \cite{HRY}. It is best to describe directly the triple factorisation system for $\Gamma_{\updownarrow}$. Degeneracies correspond to dropping vertices with exactly one incoming and one outgoing edge. Outer face operators (i.e. inert morphisms) are graphix embeddings, while inner face operators (i.e. active monomorphisms) correspond to insertion of graphices into vertices of graphices. The augmentation $\Gamma_{\updownarrow}\to\Gamma$ takes a graphix to its vertex set.

There is a single nilobject, the directed edge $\updownarrow$ without vertices. The units are directed corollas $C_{m,n}$ with $m$ incoming leaves and $n$ outgoing leaves. The graphoidal category is a unital hypermoment category because for each graphix $G$ there are essentially unique inert morphisms $C_{m(\alpha),n(\alpha)}\rIn G$, resp. active morphism $C_{m(G),n(G)}\rAct G$ determined by the vertices $\alpha$, resp. the leaves of $G$.

$\Gamma_{\updownarrow}$-operads are directed graph hyperoperads in the spirit of Getzler-Kapranov \cite{GK} while $\Gamma_{\updownarrow}$-monoids are (set-based) \emph{properads} in the sense of Vallette \cite{V}. Indeed, the underlying object of a $\Gamma_\updownarrow$-monoid is a presheaf on $(\Gamma_\updownarrow)_{unit}$, i.e. a collection of $\Sg_m\times\Sg_n$-objects where $\Sg_m\times\Sg_n$ is the automorphism group of a corolla $C_{m,n}$ with $m$ incoming and $n$ outgoing leaves ($m\geq 0,n\geq 0$). A properad structure \cite{V} on such a bisymmetric collection amounts precisely to a $\Gamma_\updownarrow$-monoid structure in the sense of Section \ref{monoidsection}. The reader should note that the morphism-set $\Gamma_{\updownarrow}(G,H)$ is contained in, but in general not equal to, the set of properad morphisms between the free properads on the graphices $G$ and $H$, cf. \cite[Sections 5-6]{HRY}. This phenomenon is well explained by Kock \cite[Section 2.4.14]{K}, cf. also Chu-Hackney \cite[Theorem 2.2.19]{CHa}: the set of all such properad maps only admits a \emph{weak} active/inert factorisation system for which the inert part also contains certain free maps which are not monomorphic. The subset $\Gamma_\updownarrow(G,H)$ only retains those properad maps whose inert part is monomorphic, thereby producing a genuin (orthogonal) active/inert factorisation system on $\Gamma_\updownarrow$.

\subsection{Plus construction}

We introduce an analog of the plus construction of Baez-Dolan \cite[Definition 15]{BD} for unital hypermoment categories $\CC$. Its characteristic property is that $\CC$-operads get identified with $\CC^+$-monoids.

The original Baez-Dolan construction was conceived for coloured symmetric operads with a similar universal property in mind. In literature, plus constructions have been proposed for \emph{polynomial monads} (cf. \cite[Section 11]{BB}, \cite[Section 2.2]{K2}), for \emph{Feynman categories} (cf. \cite[Section 3.6]{KW}), for \emph{operadic categories} (cf. \cite[Section 5]{BaM2}), and for \emph{symmetric monoidal categories} (cf. \cite[Section 3]{KM}). Each context has its own specificities and it is not obvious how to switch from one context to the other, let alone the switch from one plus construction to the other.

Our plus construction takes care of symmetries and has an elementary categorical definition. We obtain a non-full embedding of the plus construction $\Gamma^+$ of Segal's category $\Gamma$ into the dendroidal category $\Omega$ of Moerdijk-Weiss \cite{MW}. This witnesses the importance of tree combinatorics involved in the passage from $\CC$-operads to $\CC^+$-monoids. Closely related constructions may be found in \cite{HHM,B,CHH,BaM2}.

\begin{dfn}\label{hyperdefinition}Let $\CC$ be a unital hypermoment category.
\begin{itemize}\item A $\,\CC$-tree is a pair $([m],A_0\rAct\cdots\rAct A_m)$ consisting of an object $[m]$ of $\Delta$ and a functor $A_\bullet:[m]\to\CC_{act}$ such that $A_0$ is a unit of $\,\CC$ and no object among the $A_i$ is a nilobject of $\,\CC$ except possibly $A_m$;\item A $\,\CC$-tree morphism is pair $(\phi,f)$ consisting of a morphism $\phi:[m]\to[n]$ in $\Delta$ and a natural transformation $f:A\to B\phi$ which is pointwise inert, i.e. $f_i:A_i\to B_{\phi(i)}$ is inert in $\CC$ for all $i\in[m]$;\end{itemize}

A $\CC$-tree morphism $(\phi,f)$ is called \emph{active} (resp. \emph{inert}) if $\phi$ is active and $f$ invertible (resp. if $\phi$ is inert).

A \emph{vertex} of $\,([m],A_\bullet)$ is an elementary inert subobject $U\rIn A_i$ for some $i<m$. Vertices will be represented by inert morphims $([1],U\rAct A)\rIn([m],A_\bullet)$ where $U\rAct A\rIn A_{i+1}$ is the active/inert factorisation of $U\rIn A_i\rAct A_{i+1}$.

The plus construction $\CC^+$ is the category of $\,\CC$-trees and $\,\CC$-tree morphisms.\end{dfn}

\begin{prp}\label{plus}The plus construction takes unital hypermoment categories to unital hypermoment categories. The augmentation $\gamma_{\CC^+}:\CC^+\to\Gamma$ takes a $\CC$-tree to its vertex set. Units of~$\,\CC^+$ are $\CC$-trees $([1],U\rAct A)$ where $U$ is a unit of $\,\CC$.\end{prp}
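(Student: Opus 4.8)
The plan is to equip $\CC^+$ with an active/inert factorisation system manufactured from the one on $\Delta$ together with the one on $\CC$, and then to read off the augmentation and the unital structure. First I would construct the factorisation. Given a $\CC$-tree morphism $(\phi,f)\colon([m],A_\bullet)\to([n],B_\bullet)$, factor $\phi$ in $\Delta$ (which is a moment category) as $\phi_{act}$ followed by $\phi_{in}$, say $\phi_{act}\colon[m]\rAct[k]$ and $\phi_{in}\colon[k]\rIn[n]$, so that $\phi_{in}$ realises $[k]$ as the subinterval of $[n]$ with endpoints $\phi(0)$ and $\phi(m)$. Define a functor $C_\bullet\colon[k]\to\CC_{act}$ by letting $C_j$ be the active part of the composite $A_i\rIn B_{\phi(i)}\rAct B_{\phi_{in}(j)}$, where $i$ is the largest index with $\phi_{act}(i)\le j$; functoriality of active/inert factorisations in $\CC$ makes $C_\bullet$ a chain of active maps, and since each $f_i$ is already inert one gets $C_{\phi_{act}(i)}\cong A_i$. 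This yields an active $\CC$-tree morphism $([m],A_\bullet)\rAct([k],C_\bullet)$ (simplicial part $\phi_{act}$, transformation pointwise invertible) followed by an inert one $([k],C_\bullet)\rIn([n],B_\bullet)$ (simplicial part $\phi_{in}$, transformation the remaining inert parts), with composite $(\phi,f)$.

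Second I would check that these two classes constitute an active/inert factorisation system. Closure is routine (composites of endpoint-preserving, resp. distance-preserving, simplicial maps are again such; composites of pointwise inert transformations are pointwise inert; invertibility composes). Essential uniqueness reduces to that of $\Delta$ and of $\CC$: the $\Delta$-factorisation of $\phi$ is essentially unique, and since the only self-isomorphism of $[k]$ is the identity, the integer $k$, $\phi_{act}$ and $\phi_{in}$ are pinned down; then, level by level, $f_i$ being inert forces any iso-then-inert factorisation of it to have invertible active part (hence any competitor has $C'_{\phi_{act}(i)}\cong A_i$), while at the remaining levels the naturality square identifies a competitor $C'_j$ with the active part of $A_i\rIn B_{\phi(i)}\rAct B_{\phi_{in}(j)}$, i.e.\ with $C_j$. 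The delicate point, and the one I expect to be the \textbf{main obstacle}, is verifying that $([k],C_\bullet)$ is a genuine $\CC$-tree. That $C_0\cong A_0$ is a unit is immediate (the relevant composite at $j=0$ is just $f_0$). For $0<j<k$ the object $C_j$ must not be a nilobject: writing $\phi_{act}(i)\le j<\phi_{act}(i+1)$, one has an active morphism $C_j\rAct A_{i+1}$ (the composite along $C_\bullet$ from level $j$ to level $\phi_{act}(i+1)$), and $A_{i+1}$ is a non-nilobject when $i+1<m$, while no active morphism goes from a nilobject to a non-nilobject (the $\Gamma$-augmentation sends it to an active map out of the empty set, whose target is empty); the case $i+1=m$ with $A_m$ a nilobject needs a short separate argument on how nilobjects propagate along a chain of active maps.

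Third, the augmentation. I would set $\gamma_{\CC^+}([m],A_\bullet)=\bigsqcup_{i<m}\el_{A_i}$, the vertex set. On an inert $\CC$-tree morphism this is the evident injection of vertex sets, using that inert maps in $\CC$ faithfully preserve elementary inert subobjects (cf. Proposition \ref{inducedpartition}); on an active $\CC$-tree morphism it is the partition of the target vertex set by the source vertex set obtained, via $\gamma_\CC$ and Proposition \ref{inducedpartition}, from the active maps in $B_\bullet$, which is active in $\Gamma$. Functoriality follows from functoriality of pushforward and composition of partitions, and by construction $\gamma_{\CC^+}$ preserves active and inert morphisms, so axiom (i) of Definition \ref{hypermoment} holds. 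For axiom (ii), an element $\11\rIn\gamma_{\CC^+}([m],A_\bullet)$ is a vertex, i.e.\ an elementary inert $U\rIn A_i$, and the representing inert $\CC$-tree morphism $([1],U\rAct A)\rIn([m],A_\bullet)$ of Definition \ref{hyperdefinition} is the required essentially unique inert lift, with domain satisfying unit-axiom (U2) as checked next.

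Finally, units and unitality. The $\CC$-tree $([1],U\rAct A)$ with $U$ a unit of $\CC$ has exactly one vertex (namely $\el_U$, a singleton since $\gamma_\CC(U)=\11$), so lies in $\gamma_{\CC^+}^{-1}(\11)$; it satisfies (U2) because an active $\CC$-tree morphism $(\psi,g)\colon([q],D_\bullet)\rAct([1],U\rAct A)$ has a unique index $t$ with $\psi(t)=0$ and $\psi(t+1)=1$, so the only candidate inert section is supported on the edge $\{t,t+1\}$ with components $g_t^{-1}$ and $g_{t+1}^{-1}$, which are inert since $g$ is invertible. Conversely a $\CC$-tree with a single vertex must be of this form: the vertex lies in $\el_{A_0}$, so $A_1,\dots,A_{m-1}$ would have no elementary inert subobject, i.e.\ be nilobjects, which is forbidden for $i<m$; hence $m=1$ and $A_0=U$ is a unit. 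Thus the units of $\CC^+$ are exactly the $([1],U\rAct A)$ with $U$ a unit of $\CC$. That $\CC^+$ is unital now follows: each $\CC$-tree $([m],A_\bullet)$ receives the active $\CC$-tree morphism $([1],A_0\rAct A_m)\rAct([m],A_\bullet)$ with simplicial part the unique endpoint-preserving $[1]\to[m]$ and identity transformation, and invertibility of the transformation forces any active morphism into $([m],A_\bullet)$ from a unit of $\CC^+$ to have domain $([1],A_0\rAct A_m)$ up to isomorphism, the simplicial part then being determined.
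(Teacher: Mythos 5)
Your overall strategy is the same as the paper's: factor the simplicial component in $\Delta$, build the intermediate chain level by level, then read off the augmentation, the units and unitality exactly as you do (your identification of the units and your density-of-nilobjects argument for the converse are essentially verbatim the paper's). Your construction of the middle object differs in detail: you define $C_j$ as the active part of $A_i\rIn B_{\phi(i)}\rAct B_{\phi_{in}(j)}$, whereas the paper stretches the source chain $A_\bullet$ along $\phi_{act}$ and only afterwards compares with $B_\bullet$; by orthogonality the two must agree up to isomorphism, and your version has the merit of exhibiting the inert components $C_j\rIn B_{\phi_{in}(j)}$ from the outset.

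The genuine gap is exactly the case you defer, and it is not a routine propagation argument. Your reasoning for $i+1<m$ is fine ($C_j$ maps actively onto something isomorphic to $A_{i+1}$, and an active morphism out of a cardinality-$0$ object has cardinality-$0$ target since $\gamma_\CC$ preserves active maps), but when $i+1=m$ and $A_m$ is a nilobject the intermediate pushforward can genuinely collapse. Concretely, in $\CC=\Gamma$ take the target tree $([3],\underline{1}\rAct\underline{2}\rAct\underline{1}\rAct\underline{0})$ whose middle map is the partition $(\{1\},\emptyset)$, the source tree $([1],\underline{1}\rAct\underline{0})$, and the morphism with $\phi(0)=1$, $\phi(1)=3$, $f_0=(\{2\})$ and $f_1$ the identity of $\underline{0}$; naturality is automatic since $\underline{1}\to\underline{0}$ is unique, yet $C_1$ is the pushforward of $\{2\}\subset\underline{2}$ along $(\{1\},\emptyset)$, which is $\underline{0}$ --- a nilobject at a non-final level of $[k]=[2]$, and one checks directly that \emph{no} choice of non-nil $C_1$ makes the inert square commute. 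So the ``short separate argument'' you postpone cannot be supplied for the definitions taken literally: one must either constrain $\CC$-tree morphisms so that the simplicial component is pinned down on the nil tail of the chain (which is how the identification $\Gamma^+\simeq\Omega_r$ implicitly treats stumps), or treat the nil tail separately in the factorisation. You have correctly isolated the one step requiring real care --- the paper's own proof passes over it in silence --- but as submitted your proof is incomplete precisely there.
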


\begin{proof}For each $\CC$-tree $([m],A_\bullet)$ and active map $\phi:[m]\rAct\,[m']$ such that $A_i\cong A_j$ whenever $\phi(i)=\phi(j)$, let us construct an active $\CC$-tree morphism $(\phi,f):([m],A_\bullet)\rAct([m'],A'_\bullet)$. Indeed, we put $A'_i=A_j$ where $j\in[m]$ is the least integer such that $i\in[\phi(j),\phi(j+1)]$. The morphisms $A'_0\rAct\cdots\rAct A'_{m'}$ are then defined by composing the obvious morphisms in $A_\bullet$, and we get in this way an active $\CC$-tree morphism $(\phi,f)$ because of the compatibility between $A_\bullet$ and $\phi$.

For a $\CC$-tree morphism $(\phi,h):([m],A_\bullet)\to([n],B_\bullet)$ write $\phi=\phi_{in}\phi_{act}$. The definition of $\CC$-tree morphism implies that $A_i\cong A_j$ whenever $\phi_{act}(i)=\phi_{act}(j)$. We can thus construct an active $\CC$-tree morphism $(\phi_{act},f):([m],A_\bullet)\rAct([m'],A'_\bullet)$ followed by an inert $\CC$-tree morphism $(\phi_{in},g):([m'],A'_\bullet)\rIn([n],B_\bullet)$ such that $(\phi,h)=(\phi_{in},g)(\phi_{act},f)$. This factorisation is essentially unique because it is unique on the first factor and essentially unique on the second factor.

The augmentation $\gamma_{\CC^+}:\CC^+\to\Gamma$ is defined on objects by sending a $\CC$-tree to the set of its vertices. We shall turn this assignment into a functor by defining it as a composite of three moment functors $\CC^+\to\Delta\wr\CC\to\Gamma\wr\Gamma\to\Gamma$ where the last functor is the assembly functor (cf. Remark \ref{internalwreath}) and the middle functor is the wreath product $\gamma_\Delta\wr\gamma_\CC$ of the two augmentations. It thus remains to define the first functor. On objects it takes $([m],A_\bullet)$ to $([m];A_0,A_2,\dots,A_{m-1})$, cf. Definition \ref{intrinsicwreath}. For a $\CC$-tree morphism $(\phi,f):([m],A_\bullet)\to([n],B_\bullet)$ we have to define morphims $A_i\to B_j$ in $\CC$ whenever $[j,j+1]\subset[\phi(i),\phi(i+1)]$. By hypothesis we have a morphism $A_i\rIn B_{\phi(i)}$ from which we obtain the required morphism by composition with $B_{\phi(i)}\rAct\cdots\rAct B_j$. The naturality of $A\to B\phi$ ensures that this defines indeed a functor $\CC^+\to\Delta\wr\CC$. If $(\phi,f)$ is an active (resp. inert) $\CC$-tree morphism then its image in $\Delta\wr\CC$ is an active (resp. inert) morphism.

Let us determine the units of $\CC^+$. By definition, these are the objects of $\CC^+$ with a unique vertex and subject to unit-axiom (U2). In particular, they must be of the form $([1],A_\bullet)$ because otherwise they would have more than one vertex (if $m>1$) or none (if $m=0$). Unit axiom (U2) requires any active
$\CC$-tree morphism with target a unit $([1],A_\bullet)$ to admit a unique inert section. This holds actually for any $\CC$-tree $([1],A_\bullet)$ because it holds on the first factor and, in general, an active $\CC$-tree morphism $(\phi,f)$ is retractive if and only if $\phi$ is. The augmentation $\gamma_{\CC^+}:\CC^+\to\Gamma$ satisfies the lifting condition (ii) of Definition \ref{hyperdefinition} by construction.

For an arbitrary $\CC$-tree $([m],A_\bullet)$ the total composition $A_0\rAct A_m$ yields a unit $([1],A_0\rAct A_m)$ of $\CC^+$. If $\CC$ is unital, the $\CC$-tree $([m],A_\bullet)$ receives an essentially unique active $\CC$-tree map from $([1],A_0\rAct A_m)$ so that $\CC^+$ is unital as well.\end{proof}

\begin{thm}\label{BaezDolanplus}For each unital hypermoment category $\,\CC$, the categories of $\,\CC$-operads and of $\,\CC^+$-monoids are equivalent.\end{thm}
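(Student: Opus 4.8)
The plan is to exhibit an equivalence of categories between $\CC$-operads and $\CC^+$-monoids in any symmetric monoidal category $\EE$, by constructing functors $\Phi$ (from $\CC$-operads to $\CC^+$-monoids) and $\Psi$ (in the other direction) and checking that they are mutually quasi-inverse. Two identifications set the stage. First, by the unital axiom and the essential uniqueness of basic operators, the groupoid of units of $\,\CC^+$ — the $\CC$-trees $([1],U\rAct A)$ of Proposition \ref{plus} — is equivalent to $\Iso(\CC)$ via $([1],U\rAct A)\mapsto A$, with matching automorphism groups $\Aut_{\CC^+}([1],U\rAct A)\cong\Aut_\CC(A)$; so a $\CC^+$-collection is exactly a $\CC$-collection. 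Second, by the Segal description of $\CC^+$-monoids recalled in Section \ref{monoidsection}, a $\CC^+$-monoid $X$ is determined by its restriction to units together with its values on active morphisms with unital domain, and its value on an arbitrary $\CC$-tree $T=([m],A_\bullet)$ is $\bigotimes_v X([1],U\rAct A_v)$, the tensor product over the vertices $v$ of $T$, where $U\rAct A_v\rIn A_{i+1}$ is the active/inert factorisation of the composite defining $v$ in Definition \ref{hyperdefinition}; in particular $X([0],U)=I_\EE$ on the nilobjects of $\CC^+$.

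Given these, I would define $\Phi$ on a $\CC$-operad $\Oo$ by $\Phi(\Oo)(T):=\bigotimes_v\Oo(A_v)$, the $\Iso(\CC^+)$-action being induced from the $\Iso(\CC)$-action on $\Oo$ together with the internal wreath products of Remark \ref{internalwreath} — this is well defined precisely because of the equivariance axiom \ref{operaddefinition}(iii). Its presheaf structure on $(\CC^+)_{act}$ is built from the operadic multiplications: an active $\CC$-tree morphism with unital domain $([1],U\rAct A)$ is, up to isomorphism, a $\CC$-tree $T'$ whose total composite is the basic operator of $A$, and to it one assigns the iterated operadic composition $\bigotimes_{w\in\mathrm{vert}(T')}\Oo(A_w)\to\Oo(A)$ obtained by composing the structure maps $\mu_f$ along $T'$; by the Segal description this propagates to all active morphisms componentwise over the source vertices. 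That the iterated composition is independent of the order of composition is axiom \ref{operaddefinition}(ii), and axiom \ref{operaddefinition}(i) takes care of degenerate levels and of the unit maps $\eta_U$. Conversely, I would define $\Psi$ on a $\CC^+$-monoid $X$ by $\Psi(X)(A):=X([1],U\rAct A)$ with its $\Aut_\CC(A)\cong\Aut_{\CC^+}([1],U\rAct A)$-action, with multiplication $\mu_f$ for an active $f:A'\rAct A$ given by $X$ evaluated on the active $\CC$-tree morphism $([1],U\rAct A)\rAct([2],U\rAct A'\rAct A)$ — whose target carries the vertex-corollas $([1],U\rAct A')$ and $([1],U_\alpha\rAct A_\alpha)$, $\alpha\in\el_{A'}$, so that its $X$-value is $\Oo(A')\otimes\Oo(f)$ — and with unit $\eta_U$ given by $X$ on $([1],\mathrm{id}_U)\rAct([0],U)$. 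Functoriality of $X$ yields the unit and associativity axioms for $\Psi(X)$, and functoriality of $X$ on the isomorphisms of $\CC^+$ — in particular on the wreath-product automorphisms of the trees $([2],U\rAct A'\rAct A)$, which realise exactly the internal wreath products available in $\CC$ — yields the equivariance axiom.

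It then remains to construct natural isomorphisms $\Psi\Phi\cong\mathrm{id}$ and $\Phi\Psi\cong\mathrm{id}$. The first is immediate, because $([1],U\rAct A)$ has a unique vertex carried by $A$ and the structure maps of $\Psi\Phi(\Oo)$ are by construction those of $\Oo$. For the second, the specialness of $X$ supplies, for each $\CC$-tree $T$, a canonical Segal isomorphism $X(T)\xrightarrow{\sim}\bigotimes_v X([1],U\rAct A_v)=\Phi\Psi(X)(T)$, and what must be checked is that these are compatible with \emph{all} active morphisms of $\CC^+$, not merely those with unital domain. I expect this to be the main obstacle: it amounts to a generators-and-relations description of $(\CC^+)_{act}$, namely that every active $\CC$-tree morphism is, up to isomorphism and modulo the Segal decomposition over source vertices, a composite of the elementary active morphisms $([1],U\rAct A)\rAct([2],U\rAct A'\rAct A)$ and $([1],\mathrm{id}_U)\rAct([0],U)$, with the only relations the operadic associativity and unit relations; this would be established by induction on the height $m$ of the $\CC$-trees, peeling off the top level one step at a time. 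Granting it, $X$ and $\Phi\Psi(X)$ agree on these elementary morphisms by the very definition of $\Psi$, hence everywhere, so the Segal isomorphisms are natural. Everything else is routine bookkeeping, the nilobjects of $\CC$ requiring only some minor additional care.
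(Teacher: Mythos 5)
Your proposal is correct and follows essentially the same route as the paper: identify $(\CC^+)_{unit}$ with $\CC_{iso}$ to match the underlying collections, read off $\eta_U$ from $([1],1_U)\rAct([0],U)$ and $\mu_f$ from $([1],U\rAct B)\rAct([2],U\rAct A\rAct B)$, and let the operad axioms correspond to commutative diagrams in $\CC^+_{act}$. The step you flag as the main obstacle --- that all active $\CC$-tree morphisms are generated, modulo the Segal decomposition, by these elementary ones subject only to the operadic unit and associativity relations --- is exactly what the paper dispatches by observing that $\CC^+_{act}$ is built from simplices of the nerve of $\CC_{act}$ and that such a nerve is determined by its $3$-skeleton, which is the same height-induction you sketch.
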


\begin{proof}We first show that the data underlying a $\CC$-operad and a $\CC^+$-monoid are equivalent. It follows from Section \ref{monoidsection} and Proposition \ref{plus} that a $\CC^+$-monoid is a functor $X:(\CC^+_{act})^\op\to\EE$ such that for each $\CC$-tree we have $X([m],A)=\bigotimes_{\alpha\in\gamma_{\CC^*}([m],A_\bullet)}X([1],U_\alpha\rAct A_\alpha)$ where $([1],U_\alpha\rAct A_\alpha)\rIn([m],A_\bullet)$ is a vertex. Since $\CC$ is unital, the functor which associates to a unit $([1],U_\alpha\rAct A_\alpha)$ of $\CC^+$ the object $A_\alpha$ of $\CC$ induces an equivalence of categories $(\CC^+)_{unit}\overset{\sim}{\to}\CC_{iso}$ so that the underlying objects of a $\CC^+$-monoid and a $\CC$-operad are indeed equivalent. Let us denote $\Oo_X:\CC_{iso}\to\EE$ the corresponding underlying object of a $\CC$-operad.

For each unit $U$ of $\CC$, the object $\Oo_X(U)$ of $\EE$ has a distinguished ``element'' because $X([1],1_U)$ has a distinguished ``element'' $I_\EE=X([0],U)\to X([1],1_U)$ induced by the uniquely determined morphism $([1],1_U)\rAct([0],U)$ in $\CC^+_{act}$.

The multiplication $\mu_f:\Oo_X(A)\otimes\Oo_X(f)\to\Oo_X(B)$ for an active map $f:A\rAct B$ in $\CC$ corresponds to the $X$-action of the morphism $$([1], U\rAct B)\rAct([2],U\rAct A\overset{f}{\rAct} B)$$ in $\CC^+_{act}$. Unit-, associativity- and equivariance axioms of an $\Oo$-operad $\Oo_X$ are then codified by the $X$-action of certain commutative diagrams in $\CC^+_{act}$. For instance, the associativity constraint is induced by the commutativity of the following diagram$$\xymatrix{([3],U\rAct A\overset{f}{\rAct} B\overset{g}{\rAct} C)&([2],U\rAct A\overset{gf}{\rAct} C)\ar[l]\\
([2],U\rAct B\overset{g}{\rAct} C)\ar[u]&([1],U\rAct C)\ar[u]\ar[l]}$$in $\CC^+_{act}$. A consistent $X$-action with respect to these commutative diagrams is enough to define a $\CC^+$-monoid $X:(\CC^+_{act})^\op\to\EE$ since simplicial nerves are determined by their $3$-skeleton. Under this correspondence between $\CC$-operads and $\CC^+$-monoids the respective morphisms correspond as well.\end{proof}

There are variants of the dendroidal category $\Omega$ compatible with the active/inert factorisation system. A dendrix is called \emph{open} (resp. \emph{closed}) if it has no stumps (resp. no leaves).  The active/inert factorisation restricts to the full subcategory $\Omega_o$ (resp. $\Omega_c$) of open (resp. closed) dendrices. Since the corolla without leaves is closed while all other corollas are open, neither $\Omega_o$ nor $\Omega_c$ contain all units of $\Omega$.

We call a dendrix \emph{reduced} if it is either a closed dendrix or has the property that its leaves are precisely the edges of maximal height. In particular, every corolla is a reduced dendrix. A morphism of dendrices is said to be \emph{reduced} if its active/inert factorisation factors through a reduced dendrix and the morphism takes edges of same height to edges of same height. Reduced morphisms compose and the subcategory $\Omega_r$ consisting of reduced dendrices and reduced dendrix morphisms is a hypermoment subcategory of $\Omega$ containing all units of $\Omega$.

A dendrix is \emph{planar} if for each vertex the set of incoming edges is equipped with a linear ordering. The category $\Omega^{pl}$ of planar dendrices and planar morphisms is an example of a rigid hypermoment category. The subcategory $\Omega_r^{pl}$ consisting of reduced planar dendrices and reduced planar morphisms is a hypermoment subcategory of $\Omega^{pl}$ containing all units of $\Omega^{pl}$.

\begin{prp}\label{Gammaplus}The plus construction $\Gamma^+$ (resp. $\Delta^+$) is equivalent to the hypermoment category $\Omega_r$ (resp. $\Omega_r^{pl}$) of reduced (resp. reduced planar) dendrices.\end{prp}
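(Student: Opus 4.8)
The plan is to construct an explicit functor $\Gamma^+\to\Omega_r$, to verify that it is an equivalence, and to obtain the planar statement by running the same recipe with linear orderings carried along. Since $\Gamma$ has $\underline{1}$ as its only unit and $\underline{0}$ as its only nilobject, a $\Gamma$-tree (Definition \ref{hyperdefinition}) is precisely a composable chain $\underline{1}=A_0\rAct A_1\rAct\cdots\rAct A_m$ in $\Gamma$ with $A_i\not\cong\underline{0}$ for $i<m$; dualising along the identification of $\Gamma^{\op}_{act}$ with finite sets and arbitrary maps (Example \ref{examples}a) this is the same datum as a chain $S_m\to\cdots\to S_1\to S_0=\ast$ of maps of finite sets, $S_i$ being the underlying set of $A_i$ and $S_i\neq\emptyset$ for $i<m$. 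To such a chain I would associate the dendrix $T_{A_\bullet}$ whose edges of height $i$ are the elements of $S_i$ (that of $S_0$ being the root), whose vertex above the height-$i$ edge $e$ (for $i<m$) has as incoming edges the fibre of $S_{i+1}\to S_i$ over $e$, and with no further vertices. By construction every edge of height $<m$ is the outgoing edge of exactly one vertex, so none of them is a leaf; hence $T_{A_\bullet}$ is \emph{reduced}, its leaves being exactly the height-$m$ edges when $S_m\neq\emptyset$, and $T_{A_\bullet}$ being closed when $S_m=\emptyset$. Its vertex set $\bigsqcup_{i<m}S_i$ is the one assigned by the augmentation $\gamma_{\Gamma^+}$ of Proposition \ref{plus}.

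On morphisms I would combine the active/inert factorisation of $\Gamma^+$ (Proposition \ref{plus}) with the triple factorisation of $\Omega$ (degeneracy, then inner face, then outer face) coming from the interplay of the active/inert and epi/mono factorisations (Section \ref{ix}, \cite{K0}). An \emph{inert} $\Gamma$-tree morphism $(\phi,f)$, with $\phi$ distance-preserving, would be sent to the outer face operator embedding $T_{A_\bullet}$ into $T_{B_\bullet}$ as the subdendrix hanging above the height-$\phi(0)$ edge $f_0(\ast)$ and truncated at height $\phi(m)$. An \emph{active} $\Gamma$-tree morphism, with $\phi$ endpoint-preserving and $f$ invertible, would be sent — using the epi/mono factorisation of $\phi$ in $\Delta$ — to the composite of the degeneracies erasing the bivalent-vertex layers of $T_{A_\bullet}$ that $\phi$ collapses, followed by the inner face operator into $T_{B_\bullet}$ dual to contracting, in $T_{B_\bullet}$, the edges whose heights $\phi$ omits. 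Naturality of $f$ over the levelwise map $\phi$ shows that these dendrix morphisms take edges of equal height to edges of equal height, and that their active/inert factorisations pass through the reduced dendrix attached to the intermediate $\Gamma$-tree, so that the assignment lands in $\Omega_r$; essential uniqueness of the triple factorisation in $\Omega$ then yields functoriality and preservation of active and inert morphisms.

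Essential surjectivity is the object-level classification: given a reduced dendrix $T$, grade its vertices by the height of their outgoing edges, obtaining nonempty layers $L_0,\dots,L_{m-1}$; reducedness forces every edge of height $<m$ to lie below a unique vertex, so the height-$i$ edges are in bijection with $L_i$, while the edges of height $m$ are either the leaves of $T$ or (if $T$ is closed) absent. Letting $A_i$ be the set of height-$i$ edges for $i\le m$ and reading off the active maps $A_i\rAct A_{i+1}$ from the incidence of $T$ gives a $\Gamma$-tree $([m],A_\bullet)$ with $T_{A_\bullet}\cong T$. For full faithfulness I would reduce, via the triple factorisations on both sides, to the bijectivity of the hom-set maps on degeneracies, inner faces, outer faces, and isomorphisms. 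Degeneracies and outer faces are a direct translation of the construction above; for isomorphisms it suffices to match $\Aut([m],A_\bullet)$ — the group of families $(\sigma_i\in\Aut(A_i))_i$ commuting with the structure maps — with the tree-automorphism group of $T_{A_\bullet}$, which is immediate from the layered picture.

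The main obstacle is full faithfulness on \emph{inner face operators}: one must show that, between \emph{reduced} dendrices, a dendrix morphism (a priori a map of the underlying coloured symmetric operads) that preserves edge-heights amounts exactly to a levelwise-compatible family of active maps of the finite sets $A_i^{\op}$, i.e.\ to a $\Gamma$-tree morphism. This unwinding of the combinatorial definition of $\Omega$ is the technical heart of the identification and is carried out in Appendix \ref{dendrixappendix} (Sections \ref{Gammavsdendrixinert} and \ref{Gammavsdendrixactive}). The planar statement $\Delta^+\simeq\Omega_r^{pl}$ follows by the same route with orderings carried along: an object of $\Delta^+$ is a chain $[1]=A_0\rAct\cdots\rAct A_m$ in $\Delta_{act}$ whose elementary moments $\el_{A_i}$ — the subsegments of the $A_i$ — come with canonical linear orders, and since $\Delta^{\op}_{act}$ is the PRO for monoids (Section \ref{monoidsection}) the same dictionary identifies $\Delta$-trees with reduced planar dendrices and $\Delta$-tree morphisms with reduced planar dendrix morphisms.
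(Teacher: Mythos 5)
Your object-level dictionary (chains of finite sets versus layered dendrices), your criterion for reducedness, and your treatment of the inert part are all correct and coincide with what the paper does in Sections \ref{Gammavsdendrix} and \ref{Gammavsdendrixinert} and Proposition \ref{strongunitalGamma}. The genuine gap sits exactly where you place it: fullness and faithfulness on the active (inner face) morphisms. You state that this step ``is carried out in Appendix \ref{dendrixappendix}'', but that appendix \emph{is} the paper's proof of the present proposition -- Sections \ref{Gammavsdendrixinert}--\ref{Gammavsdendrixactive} build the comparison functor and Section \ref{dendrixappendixproof} supplies precisely the argument you omit -- so deferring to it leaves the statement unproved. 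The missing idea is the Kleisli presentation $\Omega(S,T)=\Hom_{\Pp(\Omega_\Seg)}(S_*,T_\Omega(T_*))$, which exhibits an arbitrary dendrix morphism as a coherent family $(T_\alpha)_{\alpha\in V(S)}$ of subdendrices of $T$ indexed by the vertices of $S$. For a reduced morphism between reduced dendrices the height condition determines $\phi$, the already-established equivalence of inert parts lifts each $T_\alpha$ to an inert subobject of $([n],B_\bullet)$, the induced edge-maps give the components $f_i:A_i\to B_{\phi(i)}$, and injectivity of the correspondence follows because a subdendrix is determined by its root and leaves, which $(\phi,f)$ prescribes. None of this appears in your proposal.

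A secondary, fixable weakness concerns your piecewise definition of the functor on degeneracies, inner faces and outer faces. Invoking ``essential uniqueness of the triple factorisation'' for functoriality presupposes (i) a triple factorisation system on $\Gamma^+$ compatible with the one on $\Omega$ (the paper only establishes the active/inert factorisation on $\CC^+$), and (ii) a verification that your three partial assignments agree on composites when factorisation classes are interchanged, e.g.\ that the image of an inner face followed by an outer face refactors to the image of the refactored composite. The paper avoids both problems by defining the functor on \emph{all} morphisms simultaneously through the vertex-indexed family description, so that functoriality reduces to essential uniqueness of the active/inert factorisation in $\Gamma^+$ alone. Your reduction of the planar case to the non-planar one by carrying linear orders along is fine once the $\Gamma$-case is actually closed.
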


\begin{proof}See Appendix \ref{dendrixappendixproof}.\end{proof}

\begin{rmk}\label{CHH}Chu-Haugseng-Heuts \cite{CHH} consider a subcategory $\Delta^1_\FF$ of the category of forests $\Delta_\FF$ introduced by Barwick \cite{B}. In the following discussion we consider an even smaller category $\tilde{\Delta}^1_\FF$, namely the full subcategory of $\Delta^1_\FF$ whose simplices of objects of $\FF$ contain at most once the emptyset, in accordance with our convention concerning $\Gamma$-trees in Definition \ref{hyperdefinition}. With this convention, we get an isomorphism of categories $\Gamma^+\cong\tilde{\Delta}^1_\FF$. Indeed, the dual of $\Gamma_{act}$ is isomorphic to a skeleton $\FF$ of the category of finite sets, and naturality squares in $\Gamma$ (left)$$\xymatrix{\ar@{ >->}[r]&&&\ar[l]|{+}&\ar[d]\ar@{^{(}->}[r]&\ar[d]\\\ar[u]|{+}\ar@{ >->}[r]&\ar[u]|{+}&\ar[u]|{+}&\ar[l]|{+}\ar[u]|{+}&\ar@{^{(}->}[r]&}$$are equivalent by Lemma \ref{retractive} and Axiom (MC) to pushout squares in $\Gamma_{act}$ with retractive horizontal maps (middle), which in turn are equivalent to pullback squares in $\FF$ with horizontal inclusions (right). This shows that the naturality squares used in defining the morphisms of $\Gamma^+$ and of $\tilde{\Delta}^1_\FF$ correspond to each other and have the same horizontal variance. The objects of $\Gamma^+$ and $\tilde{\Delta}^1_\FF$ are also in canonical one-to-one correspondence (reversing the orientation of the vertical arrows).

In \cite[Section 4]{CHH} a functor $\Delta^1_\FF\to\Omega$ is constructed which upon inspection (using our convention above and Kock's description \cite{K0} of $\Omega$ by means of tree-polynomials) induces the equivalence of categories $\Gamma^+\cong\tilde{\Delta}^1_\FF\simeq\Omega_r$ of Proposition \ref{Gammaplus}.

By \cite[Theorem 5.1]{CHH} the resulting hypermoment category inclusion $\Omega_{r}\inc\Omega$ induces an equivalence of Segal type homotopy theories for simplicial presheaves on both sides. This is related to work of Heuts-Hinich-Moerdijk \cite{HHM} where another category of forests than Barwick's $\Delta_\FF$ \cite{B} is used to compare the Cisinski-Moerdijk model \cite{CM} with the Lurie model \cite{Lu} for $\infty$-operads, see also \cite[Lemma 2.11]{CHH}.

The planar version of Proposition \ref{Gammaplus} is related to Baez-Dolan's \emph{$n$-opetopes} \cite{BD}. Starting with the simplex category $\Delta$ we can apply $n$ times our plus construction:  the \emph{units} of the resulting hypermoment category can then be viewed as a special kind of $n$-opetopes. Indeed, for any hypermoment category over $\Delta$, the plus construction comes equipped with a functor $\CC^+\to\Delta^+$. In particular, $\CC$-trees have an underlying $\Delta$-tree which is a reduced planar dendrix. This can be viewed as an interpretation of the slogan that opetopes are ``trees of trees of tress of ...''. An interesting feature of our approach is the presence of inert morphisms and of degeneracies which might reveal so far hidden aspects of opetopes.\end{rmk}

\subsection{Segal cores, strong unitality and extensionality}\label{Segalcore}

We discuss here two properties which are present in all hypermoment categories so far discussed. \emph{Strong unitality} permits a reformulation of the Segal conditions, cf. Definition \ref{Segalcondition}. \emph{Extensionality} allows us to define insertion of $\CC$-trees into vertices of $\CC$-trees. We also introduce the notion of \emph{Segal core} of a unital hypermoment category which is closely related to the definition of an ``algebraic pattern'' by Chu-Haugseng \cite{CH}.

For the notion of dense subcategory, see e.g. \cite{BMW}, especially Lemma 1.7 therein.

\begin{dfn}\label{strongunitality}
The \emph{Segal core} $\CC_\Seg$ of a hypermoment category $\CC$ is the full subcategory of the inert part spanned by the units and the nilobjects.

A unital hypermoment category is called \emph{strongly unital} if its Segal core $\,\CC_\Seg$ is \emph{dense} in the inert part $\CC_{in}$.\end{dfn}

This means that each object of $\CC$, when viewed as an object of the inert part, is a \emph{canonical} colimit of unit- and nilobjects. A simplicial presheaf $X:\CC^\op\to\Sets^{\Delta^\op}$ is then said to be a \emph{strict Segal presheaf} (resp. \emph{Segal presheaf}) if its restriction to the inert part takes the density colimit cocones to limit cones (resp. homotopy limit cones) in simplicial sets. The advantage of this refined Segal condition is that it applies to general simplicial presheaves on $\CC$. If the latter are reduced (cf. Definition \ref{Segalcondition}i) then the (homotopy) limit cones are actually (homotopy) product cones, and we recover the Segal condition of Definition \ref{Segalcondition}ii. We shall denote the category of set-valued Segal presheaves on $\CC$ by $\Pp_\Seg(\CC)$.

Let us indicate the Segal cores of our main examples. Let us also mention the concomitant notion of \emph{$\CC$-graph}. A \emph{$\CC$-graph} is a set-valued presheaf on the Segal core $\CC_\Seg$. The category of $\CC$-graphs will be denoted $\Pp(\CC_\Seg)$.\vspace{1ex}

\begin{center}\begin{tabular}{|c|c|c|c|c|c|}
\hline $\CC$ & $\Gamma$ & $\Delta$ & $\Theta_n$ & $\Omega$ & $\Gamma_\updownarrow$\\
\hline $\CC_\Seg$& $\underline{0}\to\underline{1}$ & $[0]\rightrightarrows[1]$ &cell-incl. of&edge-incl. of&edge-incl. of\\
&&&glob. $n$-cell&corollas&dir. corollas\\
\hline $\Pp(\CC_\Seg)$ & graded object& graph& $n$-graph&coloured coll.&dir. col. coll.\\
\hline $\Pp_\Seg(\CC)$ & gr.com. monoid&category&$n$-category&coloured operad&col. properad\\

\hline\end{tabular}\end{center}\vspace{1ex}

Strong unitality of $\Gamma$ and $\Delta$ have been used by Segal \cite{Se}. Strong unitality of $\Theta_n$ has been used by Batanin \cite{Ba} and the author \cite{Be} to decompose an $n$-level tree into a canonical colimit of its linear subtrees. These colimit cocones induce the decomposition of an $n$-dimensional globular \emph{pasting scheme} into globular cells, cf. Leinster \cite{Le}. Strong unitality of $\Omega$ (resp. $\Gamma_\updownarrow$) translates into a canonical decomposition of dendrices into vertex-corollas (resp. of graphices into directed vertex-corollas). In all five cases, these colimit decompositions enter into the Segal model structure for simplical presheaves on $\CC$, cf. \cite{R,CM,HRY1}. In Proposition \ref{strongunitalGamma} below we shall show that the plus construction $\Gamma^+$ is strongly unital as well.

\begin{prp}\label{preservation0}The plus construction $\CC^+$ of a strongly unital hypermoment category $\CC$ is strongly unital.
\end{prp}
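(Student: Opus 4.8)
The plan is to show that the Segal core $(\CC^+)_\Seg$ is dense in the inert part $(\CC^+)_{in}$, given that $\CC_\Seg$ is dense in $\CC_{in}$. Recall from Proposition \ref{plus} that the units of $\CC^+$ are the $\CC$-trees $([1],U\rAct A)$ with $U$ a unit of $\CC$, and the nilobjects of $\CC^+$ are the $\CC$-trees $([1],U\rAct A)$ with $A$ a nilobject of $\CC$ (together with $([0],U)$); more generally the vertices of a $\CC$-tree $([m],A_\bullet)$ are exactly its elementary inert subobjects, represented by inert $\CC$-tree morphisms $([1],U_\alpha\rAct A_\alpha)\rIn([m],A_\bullet)$. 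The first step is therefore to identify, for a fixed $\CC$-tree $T=([m],A_\bullet)$, the comma category $(\CC^+)_\Seg/T$ computed inside the inert part, and to exhibit a canonical colimit cocone over it with apex $T$.

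First I would unwind what an inert $\CC$-tree morphism into $T$ from a unit or nilobject looks like. An inert morphism $([1],B_\bullet)\rIn([m],A_\bullet)$ is by Definition \ref{hyperdefinition} a pair $(\phi,f)$ with $\phi:[1]\rIn[m]$ inert (hence $\phi$ picks out an edge $\{i,i+1\}\subset[m]$ or a degenerate one if $m=0$) and $f:B_\bullet\to A_\bullet\phi$ pointwise inert, so it amounts to a compatible pair of inert morphisms $B_0\rIn A_i$ and $B_1\rIn A_{i+1}$. When the source is a unit, $B_0$ is a unit of $\CC$, so such data are exactly the vertices of $T$ sitting in level $i<m$, together with the inert extensions forced by the factorisation $B_0\rIn A_i\rAct A_{i+1}$; when the source is a nilobject, $B_0$ is a unit and $B_1$ a nilobject of $\CC$, which picks out a vertex of $T$ whose image in $A_{i+1}$ is a nilobject, plus the boundary object $([0],A_0)$. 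The key observation is that the density colimit decomposition of $T$ in $(\CC^+)_{in}$ should reduce, level by level, to the density colimit decompositions of each $A_i$ in $\CC_{in}$ provided by strong unitality of $\CC$, glued along the active maps $A_i\rAct A_{i+1}$; concretely, $T$ viewed in the inert part is the colimit of the diagram obtained by juxtaposing the Segal-core diagrams of the $A_i$ and identifying the pieces that match up under the $A_\bullet$ functor and under the vertex representation. I would check that the natural cocone with apex $T$ is universal by a direct verification: a compatible family of inert maps out of all units and nilobjects over $T$ determines, for each $i$, a compatible family over $A_i$ in $\CC_{in}$, hence a unique inert map out of $A_i$ by density of $\CC_\Seg$; naturality of the family in the $\CC$-tree direction forces these to assemble into a unique inert $\CC$-tree morphism, and uniqueness on the $\Delta$-component $[m]$ is automatic since that data is rigid.

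The cleanest way to organise this is probably to use the description of $\CC^+$ from the plus construction together with the factorisation established in the proof of Proposition \ref{plus}: every inert $\CC$-tree morphism has $\phi$ inert on the $\Delta$-component, and on the $\CC$-component it is a pointwise inert natural transformation, so the functor $\CC^+\to\Delta\wr\CC$ of that proof restricts to a functor on inert parts, and one can transport density questions along it using that the wreath product of dense subcategories is dense (strong unitality of $\Delta$ being classical — the spine inclusion — and of $\CC$ being the hypothesis). I expect the main obstacle to be the bookkeeping of the gluing: one must check that the colimit cocone over the juxtaposed Segal-core diagrams does not over-identify or under-identify objects — in particular that the boundary object $([0],A_0)$ and the single nilobject at level $m$ interact correctly with the reducedness convention in Definition \ref{hyperdefinition} that no $A_i$ except possibly $A_m$ is a nilobject — and that the resulting diagram is genuinely the comma category $(\CC^+)_\Seg/T$ rather than merely a cofinal subdiagram of it. Handling cofinality cleanly (invoking \cite[Lemma 1.7]{BMW} or the analogous statement) is the technical heart; once the comma category is correctly identified, universality of the cocone follows formally from universality of the level-wise density cocones in $\CC$.
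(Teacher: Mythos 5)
Your high-level plan --- reduce density of $(\CC^+)_\Seg$ in $(\CC^+)_{in}$ to level-wise applications of density of $\CC_\Seg$ in $\CC_{in}$, glued along the active maps $A_i\rAct A_{i+1}$ --- is the right idea and is essentially what the second half of the paper's argument does. But your setup contains an error that propagates: the nilobjects of $\CC^+$ are exactly the $\CC$-trees $([0],V)$ with $V$ a unit of $\CC$; a $\CC$-tree $([1],U\rAct A)$ with $A$ a nilobject of $\CC$ still has precisely one vertex (the unique elementary inert subobject of the unit $A_0=U$) and is therefore a \emph{unit} of $\CC^+$, not a nilobject. Consequently the objects of $(\CC^+)_\Seg/T$ with nil domain are the inert maps $([0],V)\rIn T$, i.e.\ the elementary inert subobjects of \emph{every} level $A_i$, $0\le i\le m$ (the ``edges'' of the $\CC$-tree) --- not ``vertices whose image in $A_{i+1}$ is a nilobject'' together with the single boundary object $([0],A_0)$. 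The comma category you describe is therefore not the density diagram, and the juxtaposed level-wise decomposition built on it does not compute the correct colimit.

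Beyond this, the two points you defer as ``bookkeeping'' and ``the technical heart'' are precisely where the work lies, and your plan does not contain the means to settle them. First, given a cocone with apex $([n],B_\bullet)$ you must produce a well-defined inert $\phi:[m]\rIn[n]$, i.e.\ show that all edges of level $i$ are sent to edges of a common level; this is not ``automatic since that data is rigid'' but uses the rooted, connected tree structure (every edge of level $i+1$ lies in the block of a unique vertex of level $i$ by Proposition~\ref{inducedpartition}, and level $0$ has a single edge). The paper outsources exactly this to the strong unitality of $\Gamma^+$ (Proposition~\ref{strongunitalGamma}, proved via the embedding into $\Omega$) and then reduces the general case to full faithfulness of the comparison $\CC^+_{in}\to\Pp(\CC^+_\Seg)\times_{\Pp(\Gamma_\Seg^+)}\Gamma^+_{in}$, using that fully faithful functors are stable under pullback. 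Second, a cocone under $(\CC^+)_\Seg/T$ does \emph{not} directly yield a cocone under $\CC_\Seg/A_i$: the latter diagram contains inert maps from nilobjects of $\CC$ into $A_i$, which the Segal core of $\CC^+$ never sees. What one actually obtains at level $i$ are compatible inert maps out of the subobjects $A_\alpha\rIn A_i$ indexed by the vertices of height $i-1$, whose elementary subobjects partition those of $A_i$; it is these that strong unitality of $\CC$ lets one glue, inductively in $i$, into $f_i:A_i\rIn B_{\phi(i)}$. Finally, transporting density along $\CC^+\to\Delta\wr\CC$ does not work as stated: that functor identifies the non-isomorphic units $([1],U\rAct A)$ for varying $A$, so it is not fully faithful on Segal cores and density does not pull back along it.
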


\begin{proof}By Proposition \ref{plus}, the Segal core $\CC^+_\Seg$ of the plus construction $\CC^+$ is spanned by $\CC$-vertices $([1],U\rAct A)$ and $\CC$-edges $([0],V)$ where $U,V$ are units of $\CC$ and $A$ is an arbitrary object of $\CC$.

The augmentation $\gamma_\CC:\CC\to\Gamma$ induces a functor $(\gamma_\CC)^+:\CC^+\to\Gamma^+$ which in turn induces functors $\CC^+_{in}\to\Gamma^+_{in}$ and $\CC^+_\Seg\to\Gamma^+_\Seg$ compatible with the respective notions of vertex, edge and incidence relation. Strong unitality of $\CC^+$ amounts to the property that the nerve functor $\CC^+_{in}\to\Pp(\CC^+_\Seg)$ is fully faithful. Since it follows from Proposition \ref{strongunitalGamma} that $\Gamma_{in}^+\to\Pp(\Gamma^+_\Seg)$ is fully faithful, and fully faithful functors are stable under pullback, it suffices to show that the following diagram of functors $$\xymatrix{\CC_{in}^+\ar[r]\ar[d]&\Gamma_{in}^+\ar[d]\\\Pp(\CC^+_\Seg)\ar[r]&\Pp(\Gamma^+_\Seg)}$$induces a fully faithful comparison functor $\CC_{in}^+\to\Pp(\CC^+_\Seg)\times_{\Pp(\Gamma_\Seg^+)}\Gamma_{in}^+$.

For $\CC$-trees $([m],A_\bullet)$ and $([n],B_\bullet)$ a morphism in $\Pp(\CC^+_\Seg)\times_{\Pp(\Gamma_\Seg^+)}\Gamma_{in}^+$ consists of an inert morphism between $\Gamma$-trees (resp. reduced dendrices by Proposition \ref{Gammaplus}), together with compatible mappings between the edge- and vertex-sets of $([m],A_\bullet)$ and $([n],B_\bullet)$. It remains to be shown that such data stems from a uniquely determined inert $\CC$-tree morphism $(\phi,f)$. The simplicial component $\phi:[m]\rIn\,[n]$ is identical to the simplicial component in $\Gamma_{in}^+$. The individual $f_i:A_i\rIn B_{\phi(i)}$ of the natural transformation $f_\bullet:A_\bullet\to B_{\phi(\bullet)}$ may be constructed as follows:

Recall that the edge-set of $([m],A_\bullet)$ is the set of inert morphisms of the form $([0],V)\rIn([m],A_\bullet)$, and the vertex-set of $([m],A_\bullet)$ is the set of inert morphisms $([1],U\rAct A)\rIn([m], A_\bullet)$. They are taken (by the $\Pp(\CC_\Seg^+)$-component) to edges, resp. vertices of $([n],B_\bullet)$. For $i=0$, since $A_0$ is a unit object, the map $A_0\rIn B_{\phi(0)}$ represents the image of the root-edge of $([m],A_\bullet)$ in $([n],B_\bullet)$, and is thus uniquely determined. Assume inductively that we have constructed $A_{i-1}\rIn B_{\phi(i-1)}$. To complete the undotted diagram $$\xymatrix{A_i\ar@{ >.>}[r]^{f_i}&B_{\phi(i)}\\A_{i-1}\ar[u]|+\ar@{ >->}[r]_{f_{i-1}}&B_{\phi(i-1)}\ar[u]|+}$$observe that the edge-set (i.e. the set of elementary inert subobjects) of $A_i$ admits a partition into edge-subsets corresponding to the different vertices of height $i-1$. This partition is induced by the active map $A_{i-1}\rAct A_i$, cf. Proposition \ref{inducedpartition} and Section \ref{Gammavsdendrixinert}. Therefore, each of these edge-subsets corresponds to an inert subobject of $A_i$ taken to $B_{\phi(i)}$ by an inert map. This is done in a compatible way with vertices and incidence relations. The individual edges of $A_i$ are thus also taken compatibly to $B_{\phi(i)}$. By strong unitality of $\CC$ these inert maps glue together and define a uniquely determined inert map $f_i:A_i\rIn B_{\phi(i)}$ rendering the square commutative. By induction, we get the required $\CC$-tree morphism $(\phi,f)$.\end{proof}

\begin{dfn}A unital hypermoment category is called \emph{extensional} if elementary inert morphisms admit pushouts along active morphisms, and these pushouts are inert.
\end{dfn}

These pushouts exist in $\Gamma$ (they are dual to pullbacks of partial identities). They exist in $\Delta$ as well, and using the wreath product, in $\Theta_n$ too. A direct inspection shows that they exist in $\Omega$ and $\Gamma_\updownarrow$, cf. Hackney \cite{H}. In all these cases, the pushouts are preserved under the augmentation. Moreover, inert morphisms are generated (under composition and pushout) by inert morphisms having unital or nil-domain so that we get existence of pushouts of general inert morphisms along active morphisms. This extensionality property of an active/inert factorisation system is dual to what is known in computer science literature as a \emph{modality}.

Note that every \emph{extensionality pushout square}$$\xymatrix{A\ar@{ >.>}[r]&B\circ_\alpha A\\U\ar[u]|+\ar@{ >->}[r]_\alpha&B\ar@{ .>}[u]|+}$$has parallel inert and active morphisms as depicted. Indeed, the upper horizontal morphism is inert by definition, while the right vertical morphism is active because the left part of any orthogonal factorisation system is stable under pushout.

\begin{rmk}It is worthwhile noting that extensionality of $\Delta$ is the key ingredient of the theory of \emph{decomposition spaces} of G\'alvez-Kock-Tonks (cf. \cite{GKT}). A decomposition space is a simplicial presheaf on $\Delta$ taking the extenionsality pushout squares of $\Delta$ to homotopy pullback squares in simplicial sets. In particular, if the decomposition space is discrete, we get genuin pullback squares in sets.

Extensionality of a hypermoment category $\CC$ can be reformulated as follows: since $\Gamma^\op$ is the category $\FinSet_*$ of finite based sets, cf. Examples \ref{examples}a, the augmentation $\gamma_\CC:\CC\to\Gamma$ induces a \emph{cardinality presheaf} $\gamma_\CC^\op:\CC^\op\to\FinSet_*$. The hypermoment category $\CC$ is then extensional if and only if this cardinality presheaf $\gamma_\CC^\op$ is a discrete decomposition space in the aforementionned sense. This condition is weaker than being a discrete Segal presheaf as defined after Definition \ref{strongunitality}. Note that the cardinality presheaf $\gamma_\Delta^\op$ is a simplicial model for the \emph{circle} yielding Segal's delooping machine (cf. \cite{Se}), while the cardinality presheaf $\gamma_{\Theta_n}^\op$ is a $\Theta_n$-model for the \emph{$n$-sphere} yielding the author's $n$-fold delooping machine (cf. \cite{Be2}).\end{rmk}

The following reinforcement of extensionality should be compared with the \emph{extendable algebraic patterns} of Chu-Haugseng \cite[Definition 7.7]{CH}.

\begin{dfn}\label{strongext}An extensional hypermoment category is called \emph{strongly extensional} if for every family of active morphisms $(f_\al:U_\al\rAct B_\al)_{\al\in\el(A)}$ indexed by the elementary subobjects $\al:U_\al\rIn A$ of $A$, there is an essentially unique active morphism $f:A\rAct B$ such that the active part of $f\circ\al$ is $f_\alpha$ for all $\al\in\el(A)$.\end{dfn}

A unital hypermoment category $\CC$ is strongly extensional provided $\CC$ is extensional, strongly unital, and pushing forward along any active morphism $f:A\rAct B$ takes the density colimit cocone of $A$ to a colimit cocone of $B$ (in $\CC_{in}$). This property holds in all examples we have considered so far, except for the hypermoment categories arising as plus construction where a weaker condition holds.

Observe that the objects $([n],A_\bullet)$ of $\CC^+$, the $\CC$-trees, are graded by their height $\hgt([n],A_\bullet)=n$. The elementary subobjects $\alpha:([1],U\rAct A)\rIn([n],A_\bullet)$, i.e. the vertices of $([n],A_\bullet)$, are also graded by their height $\hgt(\al)$, formally defined by the image $[\hgt(\al),\hgt(\al)+1]$ of the interval $[1]$ under the inert morphism $\alpha$. Both gradings are the natural one's in the special case $\CC^+=\Gamma^+$, cf. Appendix \ref{dendrixappendix}.

We call $\CC^+$ \emph{coherently extensional} if the condition of Definition \ref{strongext} is only required for \emph{coherent families} $(f_\alpha:\underline{U}_\al\rAct \underline{B}_\al)_{\al\in\el(A)}$, i.e. those families which satisfy that $\hgt(\al)=\hgt(\be)$ implies $\hgt(\underline{B}_\al)=\hgt(\underline{B}_\be)$. In the special case $\CC^+=\Gamma^+$ this coherence condition ensures the existence of $\underline{A}\rAct \underline{B}$ inside $\Gamma^+\simeq\Omega_r$ which otherwise would only exist in $\Omega$, cf. Appendix \ref{Gammavsdendrixactive} and Remark \ref{insertion}.

\begin{prp}\label{preservation}Let $\,\CC$ be a unital hypermoment category.
\begin{itemize}\item[(a)]If $\,\CC$ is extensional then so is $\,\CC^+$;\item[(b)]If $\gamma_\CC$ preserves the extensionality pushout squares then so does $(\gamma_\CC)^+$.\item[(c)]If $\,\CC$ is strongly extensional then $\CC^+$ is coherently extensional.\end{itemize}\end{prp}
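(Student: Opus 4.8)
The plan is to treat all three parts by computing the relevant data in $\CC^{+}$ \emph{level-wise over $\Delta$}: every pushout of an elementary inert along an active morphism, and every family-lifting, is assembled from the pushout (resp.\ lifting) in $\Delta$ — which exists because $\Delta$ is extensional — together with the corresponding operation carried out, level by level, inside $\CC$. It is convenient to fix notation first. An elementary inert morphism of $\CC^{+}$ is a vertex $\alpha\colon\underline{U}\rIn\underline{B}$ of a $\CC$-tree $\underline{B}=([n],B_\bullet)$; by Definition \ref{hyperdefinition} it is carried by the distance-preserving inclusion $[1]\rIn[n]$ onto $\{i,i+1\}$, by an elementary inert $W\rIn B_i$ and by the inert $C\rIn B_{i+1}$ occurring in the factorisation $W\rIn B_i\rAct B_{i+1}=(W\rAct C\rIn B_{i+1})$, with $\underline{U}=([1],W\rAct C)$. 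An active morphism out of $\underline{U}$ is, by the last part of the proof of Proposition \ref{plus}, of the form $p\colon\underline{U}\rAct\underline{A}$ with $\underline{A}=([m],A_\bullet)$, $A_0\cong W$, $A_m\cong C$ and total composition $A_0\rAct A_m\cong(W\rAct C)$.

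For (a) I would \emph{define} the insertion $\underline{B}\circ_\alpha\underline{A}$ to be the $\CC$-tree $\underline{P}=([\,n+m-1\,],P_\bullet)$ with $P_k=B_k$ for $k\le i$, with $P_k=B_{k-m+1}$ for $k\ge i+m$ (so $P_{i+m}=B_{i+1}$), and with $P_{i+j}=B_i\circ_\alpha A_j$ for $1\le j\le m-1$, the latter being the extensionality pushout in $\CC$ of $W\rIn B_i$ along the active composite $W\cong A_0\rAct A_j$; by extensionality of $\CC$ this exists with $A_j\rIn P_{i+j}$ inert and $B_i\rAct P_{i+j}$ active. The structure maps of $P_\bullet$ are those of $B_\bullet$ outside the block $i\le k\le i+m$; between consecutive intermediate levels they are the canonical maps $B_i\circ_\alpha A_j\rAct B_i\circ_\alpha A_{j+1}$ obtained from the pushout property (using $A_j\rAct A_{j+1}$, and active by the cancellation for factorisation systems used in the proof of Lemma \ref{retractive}); and at the upper seam $P_{i+m-1}\rAct P_{i+m}$ is the composite $B_i\circ_\alpha A_{m-1}\rAct B_i\circ_\alpha C\rAct B_{i+1}$, the second factor being the comparison out of the pushout $B_i\circ_\alpha C$ into $B_{i+1}$ furnished by the commuting square $(W\rIn B_i,\ W\rAct C,\ C\rIn B_{i+1},\ B_i\rAct B_{i+1})$. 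I would then check that $\underline{P}$ is a genuine $\CC$-tree ($P_0=B_0$ is a unit, and each $P_k$ with $k<n+m-1$ is non-nil because it receives an inert map from a non-nilobject and $\gamma_\CC$ preserves inert morphisms and cardinality), that the above data form an inert $\CC$-tree morphism $\underline{A}\rIn\underline{P}$ and an active one $\underline{B}\rAct\underline{P}$ sitting over the extensionality pushout square of $\Delta$ with apex $[n+m-1]$, and finally the universal property: for a cocone $a\colon\underline{B}\to\underline{Z}$, $b\colon\underline{A}\to\underline{Z}$ with $a\alpha=bp$, the simplicial components are matched by the $\Delta$-pushout and at each intermediate level the two induced maps into $\underline{Z}$ agree on $W$ (this is precisely the naturality of $a,b$ together with the restriction of $a\alpha=bp$ to $0\in[1]$), so the pushout property of $B_i\circ_\alpha A_j$ in $\CC$ supplies the unique comparison $\underline{P}\to\underline{Z}$. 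I expect the one genuinely delicate point — the main obstacle — to be the bookkeeping at the two seams $k=i$ and $k=i+m$: verifying that the composite structure map at the upper seam is active, and that the comparison $\underline{P}\to\underline{Z}$ respects it. Everything else is routine.

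Part (b) is then almost formal: $(\gamma_\CC)^+\colon\CC^{+}\to\Gamma^{+}$ acts by $\gamma_\CC$ level-wise and is the identity on simplicial components, so it sends the extensionality square of $\CC^{+}$ produced in (a) to the square built identically over the same $\Delta$-pushout but with intermediate levels $\gamma_\CC(B_i\circ_\alpha A_j)$. Since $\gamma_\CC$ preserves extensionality pushout squares by hypothesis, $\gamma_\CC(B_i\circ_\alpha A_j)=\gamma_\CC(B_i)\circ_{\gamma_\CC(\alpha)}\gamma_\CC(A_j)$, and $\gamma_\CC(\alpha)$ is again elementary inert since units go to units; hence the image is exactly the extensionality square of $\Gamma^{+}$ for $(\gamma_\CC)^+\alpha$ along $(\gamma_\CC)^+p$ produced by (a) applied to the extensional category $\Gamma$. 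Thus $(\gamma_\CC)^+$ preserves extensionality pushout squares.

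For (c), let $(f_\alpha\colon\underline{U}_\alpha\rAct\underline{B}_\alpha)_{\alpha\in\el(\underline{A})}$ be a \emph{coherent} family over $\underline{A}=([m],A_\bullet)$, with $\underline{B}_\alpha=([m_\alpha],B_{\alpha,\bullet})$. The vertices of $\underline{A}$ of height $h<m$ are exactly the elementary inert subobjects $W_\alpha\rIn A_h$, so coherence means $m_\alpha$ depends only on $h$; call it $m'_h$. For each $h$ and each $0\le j\le m'_h$ the family $(W_\alpha\rAct B_{\alpha,j})_{\alpha\in\el(A_h)}$ of active morphisms (partial composites of the $\underline{B}_\alpha$) is indexed by $\el(A_h)$, so by strong extensionality of $\CC$ (Definition \ref{strongext}, available since $\CC$ is strongly extensional) there is an essentially unique active $A_h\rAct A_h^{(j)}$ whose composite with each $W_\alpha\rIn A_h$ has active part $W_\alpha\rAct B_{\alpha,j}$; in particular $B_{\alpha,j}\rIn A_h^{(j)}$ is inert. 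As $j$ varies these organise into a chain $A_h=A_h^{(0)}\rAct\cdots\rAct A_h^{(m'_h)}$, and essential uniqueness identifies $A_h^{(m'_h)}$ with $A_{h+1}$, because $A_h\rAct A_{h+1}$ itself restricts to active part $W_\alpha\rAct B_{\alpha,m_\alpha}$ on each vertex. Concatenating these chains over $h=0,\dots,m-1$ gives a $\CC$-tree $\underline{B}=([\,\sum_h m'_h\,],B_\bullet)$ (non-nilness of the non-terminal $B_k$ again from an inert map out of a non-nilobject $B_{\alpha,j}$), with an active $\CC$-tree morphism $f\colon\underline{A}\rAct\underline{B}$ of simplicial part $h\mapsto\sum_{h'<h}m'_{h'}$. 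For a vertex $\alpha$ of height $h$ the active/inert factorisation of $f\circ\alpha$ has simplicial part $[1]\rAct[m'_h]\rIn[\sum m'_{h'}]$ and $\CC$-components the chain $B_{\alpha,\bullet}$ sitting inertly inside $A_h^{(\bullet)}$, so its active part is $\underline{U}_\alpha\rAct\underline{B}_\alpha=f_\alpha$ as required; and $f$ is essentially unique since its simplicial part is forced by the requirement that each $f\circ\alpha$ factor actively through $[m_\alpha]$, whereupon the $\CC$-components are pinned down by the uniqueness clause of Definition \ref{strongext}. Together with part (a) — applicable because strongly extensional categories are extensional — this proves $\CC^{+}$ coherently extensional. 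Here the obstacle is chiefly organisational: making the level-wise liftings cohere and checking they patch into a single active $\CC$-tree morphism with the stated restriction property.
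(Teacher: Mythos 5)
Your proposal is correct and follows essentially the same route as the paper: part (a) builds the pushout $\CC$-tree by inserting, at the levels between $i$ and $i+1$, the extensionality pushouts of the elementary inert $W\rIn B_i$ along the partial composites $A_0\rAct A_j$; part (b) observes that these squares are built identically over $\Gamma^+$; and part (c) applies strong extensionality of $\CC$ level by level and concatenates the resulting simplices. Your write-up merely makes explicit some points the paper leaves implicit (the seam at level $i+m$, the universal property in (a), and the identification $A_h^{(m'_h)}\cong A_{h+1}$ in (c)), all of which check out.
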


\begin{proof}(a) We have to construct a pushout of the span $$([m],B_\bullet)\lAct([1],U\rAct A)\rIn([n],A_\bullet)$$ in $\CC^+$. The inert morphism on the right induces the right square below$$\xymatrix{B_m\ar@{=}[r]&A\ar@{ >->}[r]&A_{i+1}\\B_0\ar[u]|+\ar@{=}[r]&U\ar[u]|+\ar@{ >->}[r]&A_i\ar[u]|+}$$while the total composition of the $\CC$-tree $([m],B_\bullet)$ on the left may be identified with the left vertical morphism. By extensionality of $\CC$, we can thus push forward along $U\rIn A_i$ the $m$-simplex $B_0\rAct\cdots\rAct B_m$. This defines a $\CC$-tree $$([n+m-1],A_0\rAct\cdots A_i\rAct A_i^{(1)}\cdots\rAct A_i^{(m-1)}\rAct A_{i+1}\cdots\rAct A_n)$$realising the required pushout of the span.

(b) Since extensionality pushout squares are constructed in the same way in $\CC^+$ and in $\Gamma^+$, they are preserved under $(\gamma_\CC)^+$ whenever thay are so under $\gamma_\CC$.

(c) We carry out the construction for each level of $\underline{A}$, i.e. for each active morphism $A_i\rAct A_{i+1}$, separately. A coherent family of $\CC$-trees indexed by the vertices of $\underline{A}$ restricts to a family of $\CC$-trees of \emph{same} height $m_i$ for each elementary subobject of $A_i$, i.e. vertex of $\underline{A}$ of height $i$. Using strong extensionality of $\CC$, we construct level by level an $m_i$-simplex $A_i^{(0)}\rAct A_i^{(1)}\rAct\cdots\cdots\rAct A_i^{(m_i)}=A_{i+1}$ which has the required property with respect to the vertices of height $i$ in $\underline{A}$.

Concatenating these $m_i$-simplices for $i=0,\dots,n-1,$ yields the required $\CC$-tree $\underline{B}$ which comes equipped with an obvious active $\CC$-tree morphism $\underline{A}\rAct\underline{B}$ enjoying the required property with respect to all vertices of $\underline{A}$.\end{proof}

\begin{rmk}\label{insertion}The extensionality pushout $([n],A_\bullet)\circ_\alpha([m],B_\bullet)$ constructed in (a) may be viewed as the result of \emph{inserting} the $\CC$-tree $([m],B_\bullet)$ into the vertex represented by $\alpha:([1],U\rAct A)\rIn([n],A_\bullet)$. Indeed, after application of $(\gamma_{\CC})^+:\CC^+\to\Gamma^+$, the pushout realises the tree-insertion of the respective reduced dendrices in $\Gamma^+\simeq\Omega_r$, cf. Appendix \ref{dendrixappendix}. The reader should however keep in mind that tree-insertion in $\Omega_r$ differs from tree-insertion in $\Omega$ because inserting a reduced dendrix into the vertex of another reduced dendrix may result in a non-reduced dendrix inside $\Omega$. It is a pleasant feature of the categorical pushout that it performs precisely the right thing to correct this failure.

The extensionality pushout constructed in (c) corresponds to a simultaneous insertion of the given coherent family of $\CC$-trees into all vertices of $\underline{A}$. The coherence condition guarantees that the result of this insertion process corresponds after application of $(\gamma_\CC)^+:\CC^+\to\Gamma^+$ to a geometric insertion of the corresponding reduced dendrices (cf. \cite[Chapter IV]{BB}). In Section \ref{Gammavsdendrixactive} general active morphisms $\phi:S\rAct T$ of dendrices are described as families $(T_\alpha)_{\alpha\in V(S)}$ of subdendrices of $T$, indexed by the vertices of $S$ and fulfilling a weaker coherence condition. If $S$ and $T$ are reduced dendrices then $\phi$ is a reduced dendrix morphism precisely when the subdendrices $T_\alpha,T_\beta$ of $T$ have same height whenever the vertices $\alpha,\beta$ have same height in $S$.

The categorical pushout $S\circ_\alpha T_\alpha$ in $\Omega_r$ inserts $T_\alpha$ into the vertex $\alpha$ of $S$ and simultaneously a \emph{stretched} corolla $\overline{C}_\beta$ of same height as $T_\alpha$ into each vertex $\beta$ of $S$ of same height as $\alpha$. This produces a reduced dendrix $T$ representing the categorical pushout $S\circ_\alpha T_\alpha$ in $\Omega_r$, and this is the way the categorical pushout (a) should be thought of. Note that the stretched corolla $\overline{C}_\beta$ is obtained from the original corolla $C_\beta$ in $S$ by replacing its \emph{input edges} with linear trees of appropriate height.\end{rmk}

\subsection{Monadicity}Our final goal is to show that for strongly extensional hypermoment categories $\CC$, the forgetful functor from $\,\CC$-operads to $\,\CC$-collections is monadic. Our proof uses the equivalence between $\CC$-operads and $\CC^+$-monoids as well as an explicit formula for the free $\CC^+$-monoid generated by a $\CC$-collection.

Similar monadicity results have been obtained by Getzler \cite[Corollary 2.8]{G}, Kaufmann-Ward \cite[Theorems 1.5.3 and 1.5.6]{KW}, Chu-Haugseng \cite[Section 8]{CH} and Batanin-Markl \cite[Section 3]{BaM3}.

A key ingredient is the existence of a simultaneous $\CC$-tree insertion even if the family of $\CC$-trees to be inserted is not coherent. The trick is that any family can be replaced with a coherent family in a canonical and optimal way. We need a notion of \emph{$\CC$-tree contraction} formally inverse to \emph{$\CC$-tree stretching}.

\begin{dfn}\label{contraction}An active $\CC$-tree morphism $(\phi,f):([m],A_\bullet)\rAct([n],B_\bullet)$ is called a $k$-\emph{contraction} if $m=n+k$ and $\phi(n)=\phi(n+1)=\cdots=\phi(m)$ and $A_n\overset{=}{\rAct}A_{n+1}\overset{=}{\rAct}\cdots\overset{=}{\rAct}A_m$ and $f_i:A_i\overset{=}{\to} B_{\phi(i)}$ for $i=0,\dots,n-1$.\end{dfn}

The simplicial operator $\phi$ is the composite of $k$ elementary degeneracy operators of last index.  Each $\CC$-tree contraction $\underline{B}\rAct\underline{A}$ has an inert section $\underline{A}\rIn\underline{B}$ whose first component is a composite of simplicial face operators of last index. The vertices of $\underline{B}$ not contained in the image of $\underline{A}\rIn\underline{B}$ are \emph{effaceable}, i.e. they come equipped with a contraction to a nil-$\CC$-tree $([0], U)$ for some unit $U$ of $\CC$. Note that every object $A$ of $\CC$ induces a unit-$\CC$-tree $([1],U_A\rAct A)$, but only those unit-$\CC$-trees of the form $([1],1_U)$ come equipped with a contraction to a nil-$\CC$-tree. They corepresent effaceable vertices.

\begin{lma}\label{widepushout}Let $\CC$ be a strongly extensional hypermoment category and let $\underline{A}$ be a $\,\CC$-tree.
For any family $(\underline{U}_\alpha\rAct \underline{A}_\alpha)_{\al:\underline{U}_\alpha\!\rIn\!\underline{A}}$ of $\,\CC$-trees indexed by the vertex set of $\underline{A}$, there is an active $\CC$-tree morphism $f:\underline{A}\rAct\underline{B}$ such that

\begin{enumerate}\item for each $\alpha:\underline{U}_\alpha\rIn \underline{A}$, the given $\CC$-tree $\underline{U}_\alpha\rAct \underline{A}_\alpha$ factors through a $k_\alpha$-contraction $r_\alpha:\underline{B}_\alpha\rAct\underline{A}_\alpha$ inducing a commutative diagram\begin{diagram}[small]\underline{A}&\rAct^f&\underline{B}&\\\uIn^\alpha&&\uIn\\\underline{U}_\alpha&\rAct_{f_\alpha}&\underline{B}_\alpha&\rAct_{r_\alpha}\underline{A}_\alpha\end{diagram}
\item the contraction degree $k_f=\sum_{\alpha:\underline{U}_\alpha\!\rIn\!\underline{A}}k_\alpha$ of $f:\underline{A}\rAct\underline{B}$ is minimal among the contraction degrees
 $k_{\tilde{f}}$ of all $\tilde{f}:\underline{A}\rAct \tilde{\underline{B}}$ fulfilling (1);\item the $\CC$-tree $\underline{B}$ is up to isomorphism uniquely determined by (1) and (2).\end{enumerate}\end{lma}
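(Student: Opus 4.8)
The plan is to build $\underline{B}$ level-by-level exactly as in the proof of Proposition \ref{preservation}(c), but first replacing the given (possibly incoherent) family $(\underline{U}_\alpha\rAct\underline{A}_\alpha)$ by a canonical coherent family, obtained by \emph{stretching} each $\underline{A}_\alpha$ to a common height on each level. Concretely, for each height $i=0,\dots,n-1$ of $\underline{A}$, look at the vertices $\alpha$ of height $i$; among the associated $\CC$-trees $\underline{A}_\alpha$ let $m_i=\max_{\hgt(\alpha)=i}\hgt(\underline{A}_\alpha)$. For each such $\alpha$ set $k_\alpha=m_i-\hgt(\underline{A}_\alpha)\ge 0$ and let $\underline{B}_\alpha\rAct\underline{A}_\alpha$ be the $k_\alpha$-contraction obtained by inserting a linear nil-$\CC$-tree of length $k_\alpha$ at the top of $\underline{A}_\alpha$ (formally: $\underline{B}_\alpha$ is $([\hgt(\underline{A}_\alpha)+k_\alpha], (\underline{A}_\alpha)_\bullet$ extended by $\overset{=}{\rAct}$ repeated $k_\alpha$ times$)$; the effaceable vertices of the form $([1],1_U)$ corepresent the padding). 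By construction the resulting family $(\underline{U}_\alpha\rAct\underline{B}_\alpha)_{\alpha}$ is coherent in the sense of Definition \ref{strongext}: $\hgt(\alpha)=\hgt(\beta)$ forces $\hgt(\underline{B}_\alpha)=\hgt(\underline{B}_\beta)=m_{\hgt(\alpha)}$.

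Now apply Proposition \ref{preservation}(c) (or directly the level-by-level argument of its proof, which only uses strong extensionality of $\CC$) to the coherent family $(\underline{U}_\alpha\rAct\underline{B}_\alpha)$: this yields an active $\CC$-tree morphism $f:\underline{A}\rAct\underline{B}$ whose active part along $\alpha$ is $f_\alpha:\underline{U}_\alpha\rAct\underline{B}_\alpha$, hence, composing with the contraction $r_\alpha:\underline{B}_\alpha\rAct\underline{A}_\alpha$, recovers the originally given $\underline{U}_\alpha\rAct\underline{A}_\alpha$. This establishes (1). For the contraction degree, note $k_f=\sum_\alpha k_\alpha=\sum_{i=0}^{n-1}\sum_{\hgt(\alpha)=i}(m_i-\hgt(\underline{A}_\alpha))$.

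For minimality (2): suppose $\tilde f:\underline{A}\rAct\tilde{\underline{B}}$ satisfies (1) with contractions $\tilde r_\alpha:\tilde{\underline{B}}_\alpha\rAct\underline{A}_\alpha$ of degrees $\tilde k_\alpha$. Since $\tilde f$ is a \emph{global} active $\CC$-tree morphism, its simplicial component is a single simplicial operator $[\tilde m]\rAct[n]$, so the sub-$\CC$-trees $\tilde{\underline{B}}_\alpha$ sitting at vertices of a fixed height $i$ all have the same height $\tilde m_i$; that common height must be $\ge\hgt(\underline{A}_\alpha)$ for every vertex $\alpha$ of height $i$ (because $\tilde{\underline{B}}_\alpha$ contracts \emph{onto} $\underline{A}_\alpha$), hence $\tilde m_i\ge m_i$ and $\tilde k_\alpha=\tilde m_i-\hgt(\underline{A}_\alpha)\ge k_\alpha$; summing gives $k_{\tilde f}\ge k_f$. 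For uniqueness (3): if $k_{\tilde f}=k_f$ then $\tilde m_i=m_i$ for all $i$, so $\tilde{\underline{B}}_\alpha\cong\underline{B}_\alpha$ (a contraction onto $\underline{A}_\alpha$ of the prescribed, minimal degree inserting only effaceable vertices is unique up to isomorphism by Definition \ref{contraction}). Then $\tilde f$ and $f$ arise by applying the essentially-unique simultaneous-insertion of Proposition \ref{preservation}(c) to isomorphic coherent families, so $\tilde{\underline{B}}\cong\underline{B}$ over $\underline{A}$.

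\emph{Expected main obstacle.} The delicate point is entirely in (2)--(3): one must argue that \emph{any} $\tilde f$ satisfying (1) is forced to stretch all equal-height $\underline{A}_\alpha$'s to a \emph{common} height on each level — i.e. that coherence is not an artefact of our construction but is imposed by the mere existence of a global active $\CC$-tree morphism $\underline{A}\rAct\tilde{\underline{B}}$. This is where the rigidity of the grading on $\CC^+$ (the height function $\hgt$, and the fact that the simplicial component of an active $\CC$-tree morphism is a single $\Delta_{act}$-arrow) does the work; formulating this precisely and checking that "contracts onto $\underline{A}_\alpha$" forces $\tilde m_i\ge m_i$ for \emph{every} contributing $\alpha$, not just one, is the crux. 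The construction of the padded family and the appeal to Proposition \ref{preservation}(c) for existence are routine by comparison.
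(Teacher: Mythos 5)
Your proposal is correct and follows essentially the same route as the paper: stretch each $\underline{A}_\alpha$ to the maximal height $m_h$ occurring at its level, obtain a coherent family of $k_\alpha$-contractions, and invoke Proposition \ref{preservation}(c). The paper dismisses (2) and (3) as ``straightforward to check''; your argument via the single simplicial component of $\tilde f$ forcing a common height $\tilde m_i\geq m_i$ at each level is exactly the right way to make that check explicit.
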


\begin{proof}Recall that the vertices $\alpha:\underline{U}_\alpha\rIn\underline{A}$ are graded by height so that for a given height $h$ in $\underline{A}$ we get a positive integer $m_h=\max\{\hgt(\underline{A}_\alpha)\,|\,\hgt(\alpha)=h\}$. We now define for a fixed height $h$ in $\underline{A}$, $\CC$-trees $B_\alpha$ of height $m_h$ obtained from $\underline{A}_\alpha$ by a $k_\alpha$-stretching where $k_\alpha=m_h-\hgt(\underline{A}_\alpha)$. Note that we get corresponding $k_\alpha$-contractions $\underline{B}_\alpha\rAct \underline{A}_\alpha$ in the sense of Definition \ref{contraction}.

If we do this for all vertices of $\underline{A}$ we get a coherent family of $\CC$-trees $\underline{B}_\al$ together with $k_\al$-contractions $r_\al:\underline{B}_\al\rAct \underline{A}_\alpha.$ Consequently, an application of Proposition \ref{preservation}c yields an active $\CC$-tree morphism $f:\underline{A}\rAct\underline{B}$ completing diagram (1) above. It is then straighforward to check that properties (2) and (3) hold as well.\end{proof}

\begin{thm}\label{monadicity}For any strongly extensional hypermoment category $\CC$ and any cocomplete closed symmetric monoidal category $\EE$, the forgetful functor from $\,\CC$-operads to $\,\CC$-collections in $\EE$ is monadic.\end{thm}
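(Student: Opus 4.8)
The plan is to reduce monadicity to an application of Beck's monadicity theorem for the forgetful functor $U:\Oper_\CC(\EE)\to\Coll_\CC(\EE)$, where $\Coll_\CC(\EE)=\EE^{\Iso(\CC)}$. By Theorem \ref{BaezDolanplus} it is equivalent to prove monadicity of the forgetful functor from $\CC^+$-monoids to $\CC$-collections, and by the equivalence $(\CC^+)_{unit}\simeq\CC_{iso}$ established in the proof of that theorem, a $\CC$-collection is the same as a presheaf on $(\CC^+)_{unit}$. So the first step is to exhibit $U$ as a composite $\Oper_\CC(\EE)\simeq\mathrm{Mon}_{\CC^+}(\EE)\to\Pp((\CC^+)_{unit},\EE)\cong\Coll_\CC(\EE)$ and to check that $U$ has a left adjoint $F$, built by the usual free-algebra formula as a colimit over a category of $\CC$-trees decorated by the given collection. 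Concretely, for a $\CC$-collection $\Oo$ the free $\CC^+$-monoid $F(\Oo)$ should have, at a unit $([1],U\rAct A)$ of $\CC^+$, the value $\coprod_{\underline{A}} \Oo(\underline A)_{\mathrm{Aut}}$ where $\underline A$ ranges over $\CC$-trees with total composition $A_0\rAct A_m\cong (U\rAct A)$, and $\Oo(\underline A)=\bigotimes_{\alpha\in\el(\underline A)}\Oo(A_\alpha)$ is the vertex-decoration; the coproduct is taken over isomorphism classes (equivalently, the colimit over the relevant groupoid of $\CC$-trees), which exists because $\EE$ is cocomplete. Cocomplete closedness of $\EE$ guarantees that $\otimes$ preserves colimits in each variable, which is what makes this formula functorial and makes $U$ preserve the relevant colimits.

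Next I would verify the two hypotheses of Beck's theorem. \emph{Conservativity} of $U$ is immediate: a map of $\CC^+$-monoids which is an isomorphism on all units (hence on the underlying $\CC$-collection) is an isomorphism, since every object of $\CC^+$ is, by the Segal condition for $\CC^+$-monoids, a tensor product of the values at its vertices. The substantive hypothesis is that $U$ \emph{creates coequalizers of $U$-split pairs}. Given a pair $f,g:M\rightrightarrows N$ of $\CC^+$-monoid maps whose underlying pair of $\CC$-collection maps admits a splitting, one forms the coequalizer $q:N\to Q$ in $\Coll_\CC(\EE)$; one must equip $Q$ with a unique $\CC^+$-monoid structure making $q$ a monoid map and check it is the coequalizer in $\mathrm{Mon}_{\CC^+}(\EE)$. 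The monoid structure maps $\mu_\phi$ of $M,N$ are built out of tensor products of the vertex values, and $U$-splitness lets the split coequalizer be computed levelwise; since $\otimes$ commutes with these reflexive-type colimits in $\EE$ (again using cocomplete closedness), the structure maps descend to $Q$. This is the standard ``colimits of algebras over a colimit-preserving-enough monad'' argument, and the real content is packaged in Lemma \ref{widepushout}: it provides, for an arbitrary (not necessarily coherent) family of $\CC$-trees to be plugged into the vertices of a $\CC$-tree, a canonical coherent replacement via $\CC$-tree contraction/stretching, so that the monad $T=UF$ is described by a genuine simultaneous $\CC$-tree-insertion operation. Strong extensionality is exactly what makes this insertion well-defined.

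The main obstacle I expect is \emph{not} the abstract Beck machinery but the bookkeeping around symmetries: a $\CC$-collection carries $\Aut(A)$-actions and the internal wreath products $\sigma\wr\tau_\alpha$ (Remark \ref{internalwreath}) need not exist or be unique, so the colimit defining $F(\Oo)$ must be taken over a genuinely groupoid-indexed diagram of $\CC$-trees and one must check that the quotient by these symmetries is still compatible with tensoring and with $\CC$-tree insertion. This is precisely the point flagged in the introduction as ``a non-trivial result in view of the complicated structure the symmetries of a $\CC$-collection may have''. The way to control it is to do everything one level of the $\CC$-tree at a time, as in the proof of Lemma \ref{widepushout} and Proposition \ref{preservation}(c): the height filtration of a $\CC$-tree lets one build $F(\Oo)$ and the coequalizer $Q$ inductively, reducing at each stage to a single active morphism $A_i\rAct A_{i+1}$ where the induced partition of $\el_{A_{i+1}}$ by $\el_{A_i}$ (Proposition \ref{inducedpartition}) organizes the tensor factors and their symmetries in a manageable way. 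Once the free-monad description and the levelwise computation of $U$-split coequalizers are in place, monadicity follows from Beck's theorem; the statement that $\CC$-operads are then monadic over $\CC$-collections is obtained by transporting along the equivalence of Theorem \ref{BaezDolanplus}. I would close by remarking that the resulting monad is the ``$\CC$-operad monad'' whose algebras-in-the-free-sense are spelled out by the explicit colimit formula above.
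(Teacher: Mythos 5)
Your proposal is correct and follows essentially the same route as the paper: reduce via Theorem \ref{BaezDolanplus} to the forgetful functor from $\CC^+$-monoids to presheaves on $(\CC^+)_{unit}$, construct the explicit free $\CC^+$-monoid as a coproduct over vertex-decorated $\CC$-trees quotiented by symmetries, and use Lemma \ref{widepushout} (i.e. strong extensionality plus the stretching/contraction trick) to define the monad multiplication by simultaneous tree insertion. The only cosmetic difference is that you conclude via Beck's theorem (conservativity and creation of split coequalizers, both of which go through as you say), whereas the paper concludes by directly identifying algebras over the monad $\UU_+\FF_+$ with $\CC^+$-monoids.
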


\begin{proof}Theorem \ref{BaezDolanplus} shows that the categories of $\CC$-operads and $\CC^+$-monoids are equivalent. Under this equivalence the forgetful functor corresponds to the functor which takes a $\CC^+$-monoid $\Yy:(\CC^+_{act})^\op\to\EE$ to its restriction $\UU_+(\Yy):(\CC^+_{unit})^\op\to\EE$ to the full subcategory spanned by the unit $\CC$-trees, cf. the proof of Theorem \ref{BaezDolanplus}. It is thus sufficient to show that $\UU_+$ is monadic.

For $\Xx:(\CC^+_{unit})^\op\to\EE$ we define a $\CC^+$-monoid $\FF_+(\Xx):(\CC^+_{act})^\op\to\EE$ as follows. For each unit $\CC$-tree $\underline{U}$ let $\FF_+(\Xx)(\underline{U})$ denote the coend$$\FF_+(\Xx)(\underline{U})=I^{\otimes\CC^+_{act}(\underline{U},-)}\otimes_{\Aut_{\underline{U}}(-)}S(\Xx,-)\text{ where }S(\Xx,\underline{B})=\bigotimes_{\underline{V}_\beta\rIn\underline{B}}\Xx(\underline{V}_\beta)$$ and $I$ denotes the monoidal unit of $\EE$. The coend exists by cocompleteneness.

Informally, $\FF_+(\Xx)(\underline{U})$ is a coproduct indexed by $\CC$-trees whose total composition belongs to the active component of $\underline{U}$, where the summands are tensor products of values of $\Xx$ according to the vertices of the indexing $\CC$-tree. The coend identifies automorphisms induced by $\CC$-tree symmetries with those induced by $\Xx$.

In order to endow $\FF_+(\Xx)$ with a $\CC^+$-monoid structure, we set (cf. Section \ref{monoidsection})$$\FF_+(\Xx)(\underline{A})=\bigotimes_{\underline{U}_\alpha\rIn \underline{A}}\FF_+(\Xx)(\underline{U}_\alpha).$$ The data of a $\CC^+$-monoid consists of suitable maps $\FF_+(\Xx)(\underline{A})\to\FF_+(\Xx)(\underline{U})$ for all active $\CC$-tree morphisms $\underline{U}\rAct\underline{A}$ with unital domain.

Observe that the value $\FF_+(\Xx)(([0],V))$ at a nil-$\CC$-tree is the monoidal unit $I$ of $\EE$ and that for a unit-$\CC$-tree $([1],1_V)$ with effaceable vertex, the value $\FF(\Xx)(([1],1_V))$ contains a distinguished element represented by the identity $1_{\underline{V}}$. There is thus a map $\FF_+(\Xx)(([0],V))\to\FF_+(\Xx)([1],1_V)$ corresponding to this distinguished element. Consequently, any $\CC$-tree contraction $\underline{B}_\alpha\rAct\underline{A}_\alpha$ induces a canonical map $S(\Xx,\underline{A}_\alpha)\to S(\Xx,\underline{B}_\alpha)$ obtained by tensoring $S(\Xx,\underline{A}_\alpha)$ with maps $I\to \Xx(\underline{V})$ for effaceable vertices $\underline{V}\rIn\underline{B}_\alpha$ not contained in the image of $\underline{A}_\alpha\rIn\underline{B}_\alpha$.

We now construct the required maps $\FF_+(\Xx)(\underline{A})\to\FF_+(\Xx)(\underline{U})$ for active $\CC$-tree morphisms $\underline{U}\rAct\underline{A}$. Since by closedness of $\EE$ coproducts distribute over tensor products, a typical element of $\FF_+(\Xx)(\underline{A})$ is associated to a family of $\CC$-trees $\underline{U}_\alpha\rAct\underline{A}_\alpha$ indexed by vertices $\underline{U}_\alpha\rIn\underline{A}$. By Lemma \ref{widepushout} this family defines an essentially unique active $\CC$-tree morphism $\underline{A}\rAct\underline{B}$ inducing a canonical map $$\bigotimes_{\underline{U}_\alpha\rIn\underline{A}}S(\Xx,\underline{A}_\alpha)\longrightarrow\bigotimes_{\underline{U}_\alpha\rIn\underline{A}}S(\Xx,\underline{B}_\alpha)$$whose right hand side may be identified with $S(\Xx,\underline{B})$ because the vertex set of $\underline{B}$ is the disjoint union of the vertex sets of the $\underline{B}_\alpha$ by Lemma \ref{active=surj}. Together with $\underline{U}\rAct\underline{A}\rAct\underline{B}$ this represents an element of $\FF_+(\Xx)(\underline{U})$.

The functoriality of $\FF_+(\Xx)$ with respect to active $\CC$-tree morphisms follows from the fact that (via the above constructed map) $\FF_+(\Xx)(\underline{A})$ may be identified with a direct summand of $\FF_+(\Xx)(\underline{U})$, namely the one associated with those active $\CC$-tree morphisms $\underline{U}\rAct\underline{B}$ which factor through $\underline{U}\rAct\underline{A}$.

Any map of $\CC^+$-monoids $\FF_+(\Xx)\to\Yy$ restricts to $\UU_+\FF_+(\Xx)\to\UU_+(\Yy)$. Precomposing with the canonical unit map $\Xx\to\UU_+\FF_+(\Xx)$ yields an adjoint map of $\CC$-collections $\Xx\to\UU_+(\Yy)$. Conversely, starting with the latter, we get a map of $\CC^+$-monoids $\FF_+(\Xx)\to\FF_+\UU_+(\Yy)$. Since $\Yy$ is a $\CC^+$-monoid, there is a canonical counit map of $\CC^+$-monoids $\FF_+\UU_+(\Yy)\to\Yy$. The triangular identities follow readily from the definitions as well as the fact that algebras over the monad $\UU_+\FF_+$ may be identified with $\CC^+$-monoids in $\,\EE$.\end{proof}

\begin{cor}\label{extension}For any functor $f:\CC\to\DD$ of strongly extensional hypermoment categories the restriction functor $f^*:\Oper_\DD(\EE)\to\Oper_\CC(\EE)$ admits a left adjoint extension functor $f_!:\Oper_\CC(\EE)\to\Oper_\DD(\EE)$.\end{cor}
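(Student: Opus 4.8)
The plan is to deduce this formally from the monadicity statement of Theorem~\ref{monadicity} by means of the adjoint lifting theorem. Write $\Coll_\CC(\EE)=\mathrm{Fun}(\Iso(\CC),\EE)$ for the category of $\CC$-collections (Definition~\ref{operaddefinition}), and let $U_\CC:\Oper_\CC(\EE)\to\Coll_\CC(\EE)$ and $U_\DD:\Oper_\DD(\EE)\to\Coll_\DD(\EE)$ be the forgetful functors, with left adjoints $F_\CC$ and $F_\DD$. A functor $f:\CC\to\DD$ of hypermoment categories preserves active and inert morphisms, sends units to units, and is compatible with the augmentations over $\Gamma$; hence it matches the partition of elementary inert subobjects induced by an active map of $\CC$ with the one induced by its image in $\DD$, so that restriction along $f$ carries a $\DD$-operad to a $\CC$-operad, the structure maps and axioms of Definition~\ref{operaddefinition} transporting verbatim. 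One obtains a square of functors
\begin{diagram}[small]
\Oper_\DD(\EE)&\rTo^{f^*}&\Oper_\CC(\EE)\\
\dTo^{U_\DD}&&\dTo_{U_\CC}\\
\Coll_\DD(\EE)&\rTo_{\Iso(f)^*}&\Coll_\CC(\EE)
\end{diagram}
which commutes on the nose, the lower horizontal being restriction of collections along $\Iso(f):\Iso(\CC)\to\Iso(\DD)$.

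I would then check that the three outer functors of this square satisfy the hypotheses of the adjoint lifting theorem. First, $\Iso(f)^*$ admits a left adjoint $\Iso(f)_!=\mathrm{Lan}_{\Iso(f)}$, since $\EE$ is cocomplete. Second, $U_\CC$ and $U_\DD$ are monadic: this is precisely Theorem~\ref{monadicity}, applicable because $\EE$ is cocomplete closed symmetric monoidal and $\CC,\DD$ are strongly extensional. Third, $\Oper_\DD(\EE)$ has coequalizers of reflexive pairs: it is the category of algebras for the monad $T_\DD=U_\DD F_\DD$ on the cocomplete category $\Coll_\DD(\EE)$, and the explicit coend formula for the free $\DD$-operad in the proof of Theorem~\ref{monadicity} exhibits $T_\DD$ as a colimit of functors each of which is a finite tensor product of values of its argument; since $\EE$ is \emph{closed}, tensoring preserves colimits in each variable, so $T_\DD$ preserves reflexive coequalizers, whence $\Oper_\DD(\EE)\iso\Coll_\DD(\EE)^{T_\DD}$ has them, created by $U_\DD$.

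With these facts in hand, the adjoint lifting theorem (cf.\ Dubuc's adjoint triangle theorem, or Borceux, \emph{Handbook of Categorical Algebra~2}) applies to the commuting square above and yields the desired left adjoint $f_!:\Oper_\CC(\EE)\to\Oper_\DD(\EE)$ of $f^*$; concretely, for a $\CC$-operad $\Oo$ the object $f_!(\Oo)$ is the coequalizer in $\Oper_\DD(\EE)$ of the canonical reflexive pair
$$F_\DD\,\Iso(f)_!\,U_\CC F_\CC U_\CC\Oo\;\rightrightarrows\;F_\DD\,\Iso(f)_!\,U_\CC\Oo,$$
whose two legs come respectively from the $T_\CC$-algebra structure of $\Oo$ transported across the square and from the counit of $F_\CC\ladj U_\CC$. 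I expect the only point needing genuine care to be the existence of reflexive coequalizers in the operad categories, which, as indicated, is exactly where the hypothesis that $\EE$ be \emph{closed} (and not merely cocomplete) symmetric monoidal is used; the rest is formal, the substance of the argument residing in Theorem~\ref{monadicity}. Finally, taking $\DD=\Gamma$ and $f=\gamma_\CC$ recovers the symmetrisation functor $(\gamma_\CC)_!$ of Definition~\ref{operaddefinition}, and for $\CC=\Theta_n$ Batanin's symmetrisation of~\cite{Ba2}.
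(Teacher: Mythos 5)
Your proposal is correct and follows essentially the same route as the paper, whose proof is a one-line appeal to the adjoint (lifting) theorem combined with Theorem~\ref{monadicity} and the existence of the left Kan extension $\Iso(f)_!$ at the level of collections. You have simply spelled out the details the paper leaves implicit --- the commuting square over the forgetful functors, the verification that $\Oper_\DD(\EE)$ has reflexive coequalizers via closedness of $\EE$, and the coequalizer formula for $f_!$ --- all of which are accurate.
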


\begin{proof}This is an immediate consequence of the adjoint lifting theorem because the functor of presheaf categories $f^*:\Coll_\DD(\EE)\to\Coll_\CC(\EE)$ has a left adjoint.\end{proof}

\begin{rmk}Theorems \ref{BaezDolanplus} and \ref{monadicity} together with their proofs provide an explicit formula for the $\CC$-operad freely generated by a $\CC$-collection. In the special case $\CC=\Gamma$ we recover the classical formula for the free symmetric operad generated by a symmetric collection with the difference that in our formula only reduced dendrices occur, while classically general dendrices are used. This difference is handled by a stretching/contraction process and reflects the fact that symmetric operads can be represented as well as $\Gamma^+$-monoids as well as $\Omega$-monoids.

As an application of Corollary \ref{extension} we recover Batanin's \emph{symmetrisation} functor $(\gamma_{\Theta_n})_!:\Oper_{\Theta_n}(\EE)\to\Oper_\Gamma(\EE)$ turning an $n$-operad into a symmetric operad, cf. Remark \ref{operadexamples}c and \cite{Ba2}. One of the main results of \cite{Ba2} states that simplicial algebras over a cofibrant replacement of the terminal $\Theta_n$-operad are models for $n$-fold loop spaces because the symmetrisation of such a replacement is an $E_n$-operad. On the other hand, the author showed in \cite{Be2} that $(\Theta_n)_\infty$-monoids are also models for $n$-fold loop spaces. This is certainly not a coincidence and suggests that an analogous result holds for any strongly extensional hypermoment category.\end{rmk}

\appendix

\section{Operadic categories from rigid hypermoment categories}\label{operadicappendix}

In this appendix we make explicit the structure of operadic category carried by the dual of the active part of a rigid hypermoment category. This produces valuable examples of operadic categories. For our convenience we actually show that the active part of a rigid hypermoment category is a \emph{co}operadic category. Let us review its definition, obtained by dualising the definition of an operadic category.

\subsection{Cooperadic categories}

A category $\CC$ is said to be \emph{cooperadic} (cf. Batanin-Markl \cite[Part 1,Section 1]{BaM}) if $\CC$ comes equipped with

\begin{enumerate}\item designated initial objects in each connected component,
\item a cardinality functor $\gamma_\CC:\CC\to\Gamma_{act}$,
\item for each pair $(f,\alpha)$ consisting of a morphism $f:A\to B$ and an element $\alpha\in\gamma_\CC(A)$, there is given an object $B_\alpha$ in $\CC$, called the \emph{cofibre} of $f$ over $\alpha$, such that $\gamma_\CC(B_\alpha)=\gamma_\CC(f)(\alpha)$, and the following five axioms hold:
\end{enumerate}

\vspace{1ex}

\emph{Axiom (i):} the cardinality of initial objects is one.\vspace{1ex}

\emph{Axiom (ii):} the cofibres of identity morphisms are designated initial.\vspace{1ex}

\emph{Axiom (iii):} for each $\alpha\in\gamma_\CC(A)$ and each $f:B\to C$ under $A$ there is a morphism $f_\alpha:B_\alpha\to C_\alpha$ depending functorially on $f$ (i.e. $(gf)_\alpha=g_\alpha f_\alpha$).\vspace{1ex}

\emph{Axiom (iv):} for each morphism $f:B\to C$ under $A$, and elements $\alpha\in\gamma_\CC(A)$, $\beta\in\gamma_\CC(A\to B)(\alpha)$, the cofibre of $f$ over $\beta$ coincides with the cofibre of $f_\alpha$ over $\beta$.\vspace{1ex}

\emph{Axiom (v):} consider the following commutative diagram$$\xymatrix{A\ar[rr]^k\ar[dd]\ar[ddrr]&&A'\ar[dd]^h\ar@{-}[ld]_f\\&\ar[ld]&\\B\ar[rr]_g&&C}$$and assume given $\alpha\in\gamma_\CC(A)$ and $\alpha'\in\gamma_\CC(k)(\alpha)$. According to Axiom (iii) we get a commutative triangle $$\xymatrix{&(A')_{\alpha}\ar[ld]_{f_{\alpha}}\ar[rd]^{h_{\alpha}}&\\B_{\alpha}\ar[rr]_{g_{\alpha}}&&C_{\alpha}}$$where by assumption $\alpha'\in\gamma_\CC(A'_\alpha)$. It is then required that $(g_\alpha)_{\alpha'}=g_{\alpha'}$.

\subsection{Rigid hypermoment categories}

Recall that a unital hypermoment category is said to be rigid if every isomorphism is an automorphism, and every automorphism acts trivially on active morphisms with unital domain.

\begin{lma*}\label{uniqueactive}In a rigid hypermoment category, every morphism with unital domain has a uniquely determined active part. In particular, every object receives a uniquely determined active morphism from a uniquely determined unit.\end{lma*}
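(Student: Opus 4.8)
The plan is to reduce the statement to two elementary consequences of rigidity and then feed them into the essential uniqueness already built into the active/inert factorisation system and into the unitality axiom. First I would record two preliminary observations. Since every isomorphism of $\CC$ is an automorphism, any isomorphism that arises below automatically has equal domain and codomain. Moreover, a unit $U$ has no non-trivial automorphisms: the identity $1_U:U\rAct U$ is active with unital domain, so any $\sigma\in\Aut(U)$, being an automorphism of the codomain of an active morphism with unital domain, must satisfy $\sigma\circ 1_U=1_U$, i.e. $\sigma=1_U$.

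For the first assertion I would take a morphism $f:U\to A$ with $U$ a unit and compare two active/inert factorisations $f=f_{in}f_{act}=f'_{in}f'_{act}$, with $f_{act}:U\rAct V$ and $f'_{act}:U\rAct V'$. Essential uniqueness of the factorisation provides an isomorphism $h$ with $f_{in}=f'_{in}h$ and $hf_{act}=f'_{act}$. By the first preliminary observation $h$ is an automorphism, so $V=V'$; and since $f_{act}$ is active with unital domain, rigidity forces the automorphism $h$ of $V$ to act trivially on it, i.e. $hf_{act}=f_{act}$, whence $f'_{act}=f_{act}$. Thus the active part of $f$ is literally (not merely up to isomorphism) unique. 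Note that the statement claims nothing about the inert part, so I would not attempt to pin $f_{in}$ down.

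For the ``in particular'' I would invoke unitality of $\CC$: every object $A$ receives an essentially unique active morphism $\phi:U\rAct A$ from a unit $U$. Given a second such $\phi':U'\rAct A$, the essential uniqueness supplies an isomorphism $u$ of units compatible with $\phi$ and $\phi'$; by the first observation $u\in\Aut(U)$ and $U=U'$, and by the second observation $u=1_U$, so $\phi=\phi'$. Hence both the unit and the active morphism are uniquely determined.

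I do not anticipate a real obstacle; the only point requiring a little care is verifying that the essential-uniqueness isomorphism is exactly the kind of map to which the rigidity axiom applies --- in the factorisation step it is an automorphism of the codomain $V$ postcomposed with the active morphism $f_{act}$ whose domain is the unit $U$, and in the unitality step it is an automorphism of the unit $U$ itself, which is trivial by the preliminary observation.
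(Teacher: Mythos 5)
Your proof is correct and follows essentially the same route as the paper: two active/inert factorisations of a morphism with unital domain differ by an automorphism of the middle object, which rigidity forces to act trivially on the active part, and the second assertion then follows from essential uniqueness in the unitality axiom together with the triviality of unit automorphisms. Your two preliminary observations (isomorphisms are automorphisms; units have trivial automorphism groups, obtained by applying rigidity to $1_U$) are exactly the points the paper uses, the second being stated explicitly just after the definition of rigidity.
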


\begin{proof}Since isomorphisms are automorphisms, two active/inert factorisations of the same morphism can only differ by an automorphism of the middle object. If the domain is unital such an automorphism acts trivially so that the active part is uniquely determined. The second assertion follows then from the definition of a unital hypermoment category.\end{proof}

Each connected component of the active part $\CC_{act}$ has thus a uniquely determined initial object $U_\alpha$ which will be the designated initial object. By restriction we get a cardinality functor $\gamma_\CC:\CC_{act}\to\Gamma_{act}$ with values in $\Gamma_{act}$. For each $f:A\rAct B$ and each element $\alpha:\11\rIn\gamma_\CC(A)$ there is a unique inert lift $U_\alpha\rIn A$ in $\CC$. We will tacitely identify elements of $\gamma_\CC(A)$ with their inert lifts. The active/inert factorisation of $U_\alpha\rIn A\rAct B$ yields $U_\alpha\rAct B_\alpha\rIn B$. By Lemma \ref{uniqueactive}, the active part $U_\alpha\rAct B_\alpha$ is uniquely determined so that its target $B_\alpha$ can serve as the \emph{cofibre} of $f$ over $\alpha$. We thus get the underlying data of a cooperadic category.

\subsection{Proof of Proposition \ref{operadic}}\label{proofoperadic}
Let us now check that for each rigid hypermoment category the active part satisfies the five axioms of a cooperadic category.\vspace{1ex}

Axioms (i) and (ii) follow immediately from the definitions. For Axiom (iii) consider the following diagram

$$\xymatrix{&&U_\alpha\ar@{.>}[d]^{\alpha}\ar[lldd]\ar[rrdd]&&\\&&A\ar[ld]\ar[rd]&&\\
B_\alpha\ar@/_{5ex}/[rrrr]_{f_\alpha}\ar@{.>}[r]^{i^B_\alpha}&B\ar[rr]^{f}&&C&\ar@{.>}[l]_{i^C_{\alpha}}C_\alpha}$$

\noindent in which the dotted (resp. undotted) arrows are inert (resp. active). It suffices to apply the active/inert factorisation system to the composite morphism $fi_\alpha^B:B_\alpha\rIn B\rAct C$ to get $i_\alpha^Cf_\alpha:B_\alpha\rAct C_\alpha\rIn C$. The functoriality of the assignment $f\mapsto f_\alpha$ is a consequence of the uniqueness of the active/inert factorisation system, due to rigidity.

For Axiom (iv) observe that for $\beta:U\rIn B_\alpha$ the active/inert factorisation of the composite morphism $fi_\alpha^B\beta$ can be achieved in two steps, first replacing $i_\alpha^Bf$ with $i_\alpha^Cf_\alpha$, then applying the factoriation system to $f_\alpha\beta$. Finally, for Axiom (v), the argument just given for Axiom (iv), shows that $(g_\alpha)_{\alpha'}$ and $g_{\alpha'}$ have same sources and targets. The argument actually also shows that the two morphisms coincide.

This completes the proof of Proposition \ref{operadic}.\qed

\subsection{Rigidification}\label{rigidify}The moment categories $\Gamma$, $\Delta$, and $\Theta_n$ of Examples \ref{examples} are rigid hypermoment categories. We get as associated operadic categories respectively (a skeleton of) the category of finite sets, the category of finite ordered sets, and the category of $n$-level trees of Batanin \cite{Ba2}. The latter embeds canonically into Joyal's category of combinatorial $n$-disks \cite{J} which is isomorphic to $\Theta_n^\op$, cf. \cite{Be2}.

The hypermoment categories $\Omega$ and $\Gamma_\updownarrow$ of Section \ref{ix} are not rigid. However, there are \emph{planar} versions of $\Omega$ and $\Gamma_\updownarrow$ which are rigid. There is also a different, purely combinatorial way of rigidifying $\Omega$ (resp. $\Gamma_\updownarrow$) by endowing a dendrix (resp. graphix) with linear orderings of the incoming/outgoing leaves, and also with linear orderings of the incoming/outgoing edges of each vertex. If the active morphisms are required to preserve the linear orderings of the leaves and also to convert the linear ordering of a vertex-corolla to the linear ordering of the leaves of the expanded tree, then we get a rigid hypermoment category with associated operadic category. It would be most useful to know \emph{which} unital hypermoment categories admit such combinatorial rigidifications and, if so, how to construct them in an intrinsic way.


\section{Embedding $\Gamma^+$ into the dendroidal category $\Omega$}\label{dendrixappendix}

In this appendix the plus construction $\Gamma^+$ is identified with a hypermoment subcategory $\Omega_r$ of $\Omega$. The latter consists of \emph{reduced} dendrices and \emph{reduced} dendrix morphisms, cf. Section \ref{ix}. This embedding $\Gamma^+\inc\Omega$ is interesting for two reasons.

First, although implicit in the comparison of two notions of $\infty$-operad by Chu-Haugseng-Heuts \cite{CHH}, the hypermoment subcategory $\Omega_r$ has so far not attracted much attention despite of the surprising fact that from a homotopical point of view $\Omega_r$ does the same job as $\Omega$, cf. \cite[Theorem 5.1]{CHH} and Remark \ref{CHH}.

Second, each unital hypermoment category $\CC$ induces a functor of plus constructions $(\gamma_\CC)^+:\CC^+\to\Gamma^+$. In particular, the objects of $\CC^+$, the so-called $\CC$-trees, may be viewed as reduced dendrices equipped with further structure: a colouring of the edges by units of $\CC$. This yields a geometric perspective on our plus construction.

Via the isomorphism $\Gamma^+\cong\tilde{\Delta}^1_\FF$ of Remark \ref{CHH}, our embedding $\Gamma^+\inc\Omega$ is induced by the functor $\tau:\Delta_\FF^1\to\Omega$ constructed in \cite[Section 4]{CHH}. Taking the image of $\tau$ eliminates certain ``degenerate'' objects from the original $\Delta_\FF^1$. While Chu-Haugseng-Heuts rely on Kock's combinatorial description \cite{K0} of the dendroidal category $\Omega$, we stick here as closely as possible to the original definition of Moerdijk-Weiss \cite{MW}.

\subsection{Reduced dendrices and $\Gamma$-trees}--\label{Gammavsdendrix}\vspace{1ex}

Let $([m],A_0\rAct\cdots\rAct A_m)$ be a $\Gamma$-tree as in Definition \ref{hyperdefinition}. We now associate to such a $\Gamma$-tree in an informal way a dendrix, see Figure \ref{dendrix}. The objects of $\Gamma$ are viewed as finite sets. The elements of $A_i$ are the edges of height $i$ of the dendrix associated with $([m],A_\bullet)$. The active morphism $A_i\rAct A_{i+1}$ encodes which edges of $A_{i+1}$ are supported by a given edge of $A_i$. If an element of $A_i$ indexes the empty subset of $A_{i+1}$ then the corresponding edge gets a stump. If $A_m\not=\underline{0}$ then the dendrix has as many leaves as there are elements in $A_m$.

We get in this way a dendrix which is closed if and only if $A_m=\underline{0}$ and otherwise has all its leaves at maximal height, and no edges with stump at maximal height. This is the definition of a \emph{reduced} dendrix. Conversely, every reduced dendrix determines a unique $\Gamma$-tree to which it is associated. Notice that this correspondence is compatible with the respective notions of vertex.

\begin{figure}
$$\xymatrix@=0.6cm@R=0.3cm{&\ar@{-}[ddr]^1&\ar@{-}[dd]|2&\ar@{-}[ddl]_3&&&\\&&&&&\underline{3}&\\\bullet\ar@{-}[ddr]^1 &&\bullet  \ar@{-}[ddl]_2&&&&\\&&&&&\underline{2}\ar[uu]|{+}_{(\emptyset,\{1,2,3\}\}}&\\& \bullet\ar@{-}[ddr]^1 & &\bullet\ar@{-}[ddl]_2&&
 &\\&&&&&\underline{2}\ar[uu]|{+}_{(\{1,2\},\emptyset)}&\\ & &\bullet \ar@{-}[dd]^1& & &&\\&&&&&\underline{1}\ar[uu]|{+}_{(\{1,2\})}&\\&&&&&&}$$
\caption{Reduced dendrix and corresponding $\Gamma$-tree}
\label{dendrix}
\end{figure}
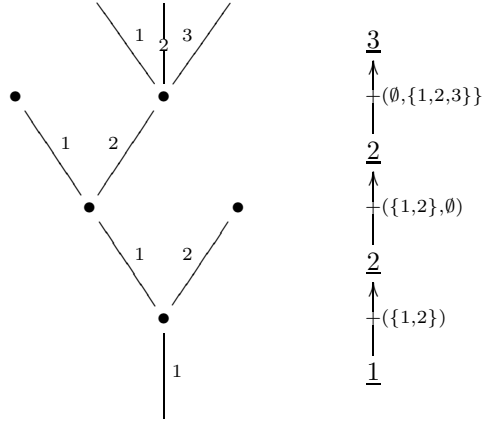

\subsection{Combinatorial description of inert dendrix morphisms.}\label{Gammavsdendrixinert}In order to give a rigorous definition of the previous correspondence we have to decribe first the inert part of the dendroidal category in purely combinatorial terms.

A \emph{dendrix} $S$ is given by a finite set $E(S)$ of \emph{edges} and a finite set $V(S)$ of \emph{vertices}, equipped with an incidence relation such that each edge is incident to at most two vertices, and each vertex has a distinguished incident edge. This distinguished edge is called \emph{outgoing} while the other incident edges of the same vertex are called \emph{incoming}. Two axioms must be satisfied: (1) all \emph{inner} edges (i.e. those incident to two vertices) are outgoing from one and incoming into the other incident vertex; (2) there is a unique \emph{outer} (i.e. non-inner) edge, the so-called \emph{root}, such that each edge is linked to the root through an oriented edge-path. The outer non-root edges are called \emph{leaves}. We assume that dendrices are non-empty, and that there is a dendrix $|$ (the free-living edge) with a single edge but no vertices. For each non-negative integer $k\geq 0$ there is a dendrix $C_k$ with a single vertex having $k$ incoming edges and one outgoing edge. This dendrix $C_k$ is called a \emph{corolla} of arity $k$. More generally, a vertex of a dendrix is said to a have \emph{arity} $k$ if it has precisely $k$ incoming edges.

An inert morphism $S\rIn T$ between dendrices $S,T$ is given by a pair of maps $(V(S)\to V(T),E(S)\to E(T))$ respecting the incidence relations, the distinguished outgoing edges, and the vertex arities. The inert part $\Omega_{in}$ of the dendroidal category of Moerdijk-Weiss \cite{MW} consists precisely of dendrices and inert morphisms between them. Recall from Definition \ref{strongunitality} that the \emph{Segal core} $\Omega_\Seg$ is the full subcategory of $\Omega_{in}$ spanned by the free-living edge $|$ and the corollas $C_k$.

Set-valued presheaves on $\Omega_\Seg$ are called \emph{$\Omega$-graphs}. The functor taking a dendrix $S$ to its $\Omega$-graph $S_*=\Omega_{in}(-,S)$ is a fully faithful functor $\Omega_{in}\to\Pp(\Omega_\Seg)$. This is just a reformulation of \emph{strong unitality} of $\Omega$, cf. Section \ref{Segalcore} and \cite[Section 1.6]{BMW}.

We now define a comparison functor $\Gamma^+_{in}\to\Omega_{in}$. Each $\Gamma$-tree $([m],A_\bullet)$ defines a dendrix $S_{([m],A_\bullet)}=(E_{([m],A_\bullet)}),V_{([m],A_\bullet)})$ by letting the edge-set be the set of inert subobjects of the form $([0],\11)\rIn([m],A_\bullet)$ and the vertex-set be the set of inert subobjects of the form $([1],\11\rAct\kk)\rIn([m],A_\bullet)$ for varying $k\geq 0$. An edge is incident to a vertex when it factors through it. It is \emph{incoming} (resp. \emph{outgoing}) if the first component of $([0],\11)\rIn([1],\11\rAct\kk)$ takes $0$ to $1$ (resp. $0$).

Inert morphisms of $\Gamma$-trees $([m],A_\bullet)\rIn([n],B_\bullet)$ induce (by postcomposition) inert morphisms $S_{([m],A_\bullet)}\rIn S_{([n],B_\bullet)}$ between the associated dendrices.

\begin{prp*}\label{strongunitalGamma}The comparison functor $\Gamma_{in}^+\to\Omega_{in}$ is fully faithful. Its essential image is spanned by the reduced dendrices. In particular, the hypermoment category $\Gamma^+$ is strongly unital (cf. Definition \ref{strongunitality}).\end{prp*}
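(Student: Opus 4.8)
## Proof Proposal for Proposition \ref{strongunitalGamma}

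The plan is to establish the three claims in sequence, with the bulk of the work going into full faithfulness of the comparison functor $\Gamma_{in}^+ \to \Omega_{in}$; the identification of the essential image and the strong unitality conclusion will then follow quickly.

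First I would describe the comparison functor explicitly on both objects and morphisms, making precise the informal dictionary of Section \ref{Gammavsdendrix}: an edge of $S_{([m],A_\bullet)}$ at height $i$ is an element of the finite set $A_i$ (encoded as an inert subobject $([0],\11)\rIn([m],A_\bullet)$ landing in the $i$-th slot), a vertex of height $i$ is an element of $A_i$ together with the corolla structure recording its fibre under $A_i \rAct A_{i+1}$, and incidence is determined by the active maps $A_i \rAct A_{i+1}$ together with the simplicial face structure. I would observe that this assignment sends inert $\Gamma$-tree morphisms $(\phi, f)\colon ([m],A_\bullet)\rIn([n],B_\bullet)$ to inert dendrix morphisms by postcomposition, and that it is clearly functorial. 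For faithfulness: an inert dendrix morphism $S_{([m],A_\bullet)}\rIn S_{([n],B_\bullet)}$ records, among other data, where the root edge (height $0$) goes and how heights are preserved along edge-paths, hence determines the simplicial map $\phi\colon [m]\to [n]$; and since each $A_i$ is recovered as the set of height-$i$ edges, the map on edges at each height recovers the component $f_i\colon A_i \rIn B_{\phi(i)}$ uniquely. Thus the functor is injective on hom-sets.

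The heart of the argument is \textbf{fullness}: given an inert dendrix morphism $g\colon S_{([m],A_\bullet)}\rIn S_{([n],B_\bullet)}$, I must produce an inert $\Gamma$-tree morphism $(\phi,f)$ inducing it. The simplicial component $\phi$ is forced as above (edges of $S_{([m],A_\bullet)}$ come with a well-defined height, since the root is unique and every edge has an oriented path to it, and $g$ preserves incidence and the outgoing/incoming distinction, hence heights; one checks $\phi$ is order-preserving and its image is an interval). The components $f_i\colon A_i\to B_{\phi(i)}$ are read off from $g$ on edge-sets at each height. The point requiring care is that these $f_i$ are \emph{inert} morphisms in $\Gamma$ (i.e. injections of finite sets) and that the squares
\begin{diagram}[small]
A_i & \rAct & A_{i+1}\\
& & \\
\end{diagram}
are respected — more precisely that $f_\bullet$ is a natural transformation $A_\bullet \to B_{\phi(\bullet)}$. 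Injectivity of $f_i$ follows because $g$ is injective on the height-$i$ edges (an inert dendrix morphism is a monomorphism — its components on edges and vertices are injections, since it preserves arities and incidence and distinct edges/vertices of $S$ have distinct images). Naturality amounts to the statement that for an edge $e$ of $A_i$, the fibre of $e$ under $A_i \rAct A_{i+1}$ maps, under $g$, into the fibre of $f_i(e)$ under the appropriate composite $B_{\phi(i)}\rAct\cdots\rAct B_{\phi(i+1)}$: this is exactly the content of $g$ preserving the incidence relation between the vertex sitting above $e$ and its incoming edges, together with the partition-refinement picture of Proposition \ref{inducedpartition} applied to the active maps of $B_\bullet$. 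I expect \textbf{this naturality verification to be the main obstacle}, because it requires threading the vertex-arity-preservation of dendrix morphisms through the whole chain $B_{\phi(i)}\rAct B_{\phi(i)+1}\rAct\cdots\rAct B_{\phi(i+1)}$ and matching it with the definition of a morphism of $\Gamma$-trees; but since $\Gamma$-tree morphisms are pointwise just injections of finite sets, once the set-level maps are fixed the naturality is a finite combinatorial check on fibres.

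Finally, for the essential image: a dendrix $S$ arises as $S_{([m],A_\bullet)}$ for some $\Gamma$-tree if and only if its edges can be consistently assigned heights $0,\dots,m$ with the root at height $0$, all leaves (if any) at the maximal height $m$, and no stump at maximal height — this is precisely the definition of a \emph{reduced} dendrix recalled in Section \ref{ix}. One direction is built into the construction (every $S_{([m],A_\bullet)}$ is reduced because $([m],A_\bullet)$ has no nilobject except possibly $A_m$, and $A_m = \underline 0$ forces $S$ closed while $A_m \neq \underline 0$ puts all leaves at height $m$); conversely, a reduced dendrix has a well-defined height function on edges, and setting $A_i$ to be the set of height-$i$ edges with the evident active maps recovers a $\Gamma$-tree. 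For the strong unitality conclusion: the Segal core $\Omega_\Seg$ is contained in the reduced dendrices (the free-living edge and every corolla are reduced), and by Section \ref{Gammavsdendrixinert} the nerve functor $\Omega_{in}\to\Pp(\Omega_\Seg)$ is fully faithful (strong unitality of $\Omega$). Since the essential image of $\Gamma_{in}^+$ in $\Omega_{in}$ is the full subcategory of reduced dendrices, and $\Gamma^+_\Seg$ (the $\Gamma$-vertices and $\Gamma$-edges) maps isomorphically onto $\Omega_\Seg$ under the comparison functor, the composite $\Gamma_{in}^+ \xrightarrow{\ \sim\ } (\text{reduced dendrices}) \hookrightarrow \Omega_{in} \to \Pp(\Omega_\Seg)$ is fully faithful and factors through $\Pp(\Gamma^+_\Seg)$, which is exactly the assertion that $\Gamma^+$ is strongly unital in the sense of Definition \ref{strongunitality}.
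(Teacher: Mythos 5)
Your proposal is correct and follows essentially the same route as the paper: the combinatorial heart in both cases is reading off the simplicial component $\phi$ from the height of the image of the root and the components $f_i$ from the edge-maps at each height, and the strong unitality claim is in both cases extracted from the identification of the Segal cores $\Gamma^+_\Seg\cong\Omega_\Seg$ together with the density of $\Omega_\Seg$ in $\Omega_{in}$. The only (harmless) difference is one of ordering — the paper first uses the commutative square over $\Pp(\Omega_\Seg)$ to reduce full faithfulness of the comparison functor to full faithfulness of the nerve $\Gamma^+_{in}\to\Pp(\Gamma^+_\Seg)$ and then proves the latter, whereas you prove full faithfulness of the comparison functor directly and run the same square backwards to get strong unitality; your explicit verification of naturality via arity- and incidence-preservation is a point the paper leaves implicit.
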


\begin{proof}Notice first that the comparison functor identifies the Segal core of $\Gamma^+$ with the Segal core of $\Omega$. Indeed, the units $([1],\11\rAct\kk)$ of $\Gamma^+$ are taken to the corollas $C_k$, and the nilobject $([0],\11)$ of $\Gamma^+$ is taken to the free-living edge $|$. Moreover, the unit $([1],\11\rAct\kk)$ has the same automorphism group in $\Gamma^+$ as the corolla $C_k$ in $\Omega$, and there are precisely $k+1$ inert morphisms $([0],\11)\rIn([1],\11\rAct\kk)$ in $\Gamma^+$ corresponding to the $k+1$ edge-inclusions $|\rIn C_k$ in $\Omega$. Between distinct units there are no inert morphisms neither in $\Gamma^+$ nor in $\Omega$. We can thus identify the category $\Pp(\Gamma_\Seg)$ of $\Gamma$-graphs with the category $\Pp(\Omega_\Seg)$ of $\Omega$-graphs getting the following commutative diagram of functors
$$\xymatrix{\Gamma_{in}^+\ar[r]\ar[d]&\Omega_{in}\ar[d]\\\Pp(\Gamma_\Seg)\ar@{=}[r]&\Pp(\Omega_\Seg)}$$in which the right vertical functor is fully faithful. Therefore, the upper horizontal functor is faithful (resp. full) if and only if the left vertical functor is so.

Now, an inert morphism between $\Gamma$-trees is completely determined by what it does to the edges of the associated dendrices. This proves faithfulness. For $\Gamma$-trees $([m],A_\bullet),([n],B_\bullet)$ consider an inert morphism $S_{([m],A_\bullet)}\rIn S_{([n],B_\bullet)}$ between the associated dendrices. The root of $S_{([m],A_\bullet)}$ is taken to an edge of $S_{([n],B_\bullet)}$ which has a well-defined height $h$ inside $S_{([n],B_\bullet)}$ (where the height of the root is $0$). This defines a unique inert map $\phi:[m]\to[n]$ in $\Delta$ taking $0$ to $h$. Moreover, the induced edge-map $E(S_{([m],A_\bullet)})\to E(S_{([n],B_\bullet)})$ defines inert maps $f_i:A_i\rIn B_{\phi(i)}$ in $\Gamma$ for all $i\in[m]$. The pair $(\phi,f)$ defines an inert map $([m],A_\bullet)\rIn([n],B_\bullet)$ in $\Gamma^+$ which by construction is taken to the given inert map $S_{([m],A_\bullet)}\rIn S_{([n],B_\bullet)}$ in $\Omega$. This proves fullness and hence strong unitality of the hypermoment category $\Gamma^+$.

The functor $\Gamma_{in}^+\to\Omega_{in}$ induces on objects the one-to-one correspondence of Section \ref{Gammavsdendrix} so that the essential image of $\Gamma_{in}^+\to\Omega_{in}$ consists precisely of reduced dendrices and inert morphisms between them.\end{proof}

\subsection{Combinatorial description of general dendrix morphisms}\label{Gammavsdendrixactive}

In order to relate $\Gamma$-tree and dendrix morphisms in total generality, we have to make explicit the \emph{free coloured (symmetric) operad} generated by a dendrix. This construction has been discussed at several places in literature, see e.g. \cite{BeMo, MW, W, K0, HM}. For sake of completeness let us review it here.

Each coloured operad $\Oo$ has an underlying $\Omega$-graph $U_\Omega\Oo$ for which $(U_\Omega\Oo)(|)$ is the colour-set of $\Oo$, and $(U_\Omega\Oo)(C_k)$ is the set of $k$-ary operations of $\Oo$. The action of $\Aut(C_k)$ accounts for the symmetries, while the remaining presheaf action of $U_\CC\Oo$ determines inputs and output of the individual operations. This forgetful functor $U_\Omega$ has a left adjoint $F_\Omega$. The associated monad $T_\Omega=U_\Omega F_\Omega$ on the category of $\Omega$-graphs fixes the colour-set and has for any $\Omega$-graph $X$ the following description $$T_\Omega(X)(C_k)=\coprod_{[S]}\Omega_{act}(C_k,S)\times_{\Aut(S)}\Hom_{\Pp(\Omega_\Seg)}(S_*,X)$$where the coproduct ranges over isomorphism classes of dendrices. The relevant case is when $X=T_*$ is the $\Omega$-graph induced by a dendrix $T$. We get as set of free $k$-ary operations $T_\Omega(T_*)(C_k)$ the set of pairs $(S,\sigma)$ consisting of a subdendrix (i.e. inert subobject) $S$ of $T$ and a bijection $\sigma$ between the leaves of $C_k$ and the leaves of $S$. The same construction based on the formalism of tree-polynomials is described by Kock in \cite[Corollary 1.2.10]{K0}.

It turns out that $U_\Omega$ is \emph{monadic} (cf. Weber \cite[Example 2.14]{W}) so that for any dendrices $S,T$ we get the following Kleisli-type description of the morphism-set$$\Omega(S,T)=\Hom_{\Pp(\Omega_\Seg)}(S_*,T_\Omega(T_*))$$based only on the inert part of $\Omega$ and the free-forgetful monad $T_\Omega$ on $\Omega$-graphs. Notice that the inclusion $\Omega_{in}\subset\Omega$ is induced by the \emph{unit} $\eta:id_{\Pp(\Omega_\Seg)}\to T_\Omega$, and composition in $\Omega$ is induced by the \emph{multiplication} $\mu:T_\Omega^2\to T_\Omega$ of the monad.

In more geometrical terms, a dendrix morphism $S\to T$ is represented by a family $(T_\alpha)_{\alpha\in V(S)}$ of subdendrices of $T$, indexed by the vertex-set of $S$, and subject to the following two conditions: (1) the leaves (resp. root) of $T_\alpha$ are in bijection with the incoming (resp. outgoing) edges of $\alpha$, (2) the subdendrices $T_\alpha$ are grafted inside $T$ according to the same pattern as the vertex-corollas $\alpha$ inside $S$.

We can now assign to each $\Gamma$-tree morphism $([m],A_\bullet)\to([n],B_\bullet)$ a dendrix morphism $S_{([m],A_\bullet)}\to S_{([n],B_\bullet)}$ by means of a family $(T_\alpha)_{\alpha\in V(S_{([m],A_\bullet)})}$ in which $T_\alpha$ represents the subdendrix of $S_{([n],B_\bullet)}$ obtained by factorising$$([1],\11\rAct\kk)\overset{\alpha}{\rIn}([m],A_\bullet)\to([n],B_\bullet)$$into an active followed by an inert morphism in $\Gamma^+$ and using Section \ref{Gammavsdendrixinert}.

Conditions (1)-(2) are then satisfied because of the essential uniqueness of the active/inert factorisation system. This construction defines a functor $\Gamma^+\to\Omega$ taking inert (resp. active) morphisms to inert (resp. active) morphisms, and restricting to the comparison functor $\Gamma^+_{in}\to\Omega_{in}$ of Section \ref{Gammavsdendrixinert}.

\subsection{Proof of Proposition \ref{Gammaplus}}\label{dendrixappendixproof}We treat only the moment category $\Gamma$ because the proof for $\Delta$ is completely analogous. It remains to be shown that the comparison functor $\Gamma^+\to\Omega$ of Section \ref{Gammavsdendrixactive} induces an equivalence of categories $\Gamma^+\simeq\Omega_r$. This is clear with regard to objects and inert morphisms using Sections \ref{Gammavsdendrix} and \ref{Gammavsdendrixinert}. Since the comparison functor respects the active/inert factorisation, the image-morphism of any $\Gamma$-tree morphism $(\phi,f):([m],A_\bullet)\to([n],B_\bullet)$ factors through a reduced dendrix. It follows from the definition of a $\Gamma$-tree morphism that edges of same height (i.e. elements of $A_i$) are taken to edges of same height (namely elements of $B_{\phi(i)}$). Therefore, the comparison functor $\Gamma^+\to\Omega$ factors through $\Omega_r$.

Assume conversely we are given a \emph{reduced} dendrix morphism $S_{([m],A_\bullet)}\to S_{([n],B_\bullet)}$. The condition on the edge-heights determines a functor $\phi:[m]\to[n]$ in $\Delta$. The dendrix morphism itself is represented by a coherent family of \emph{reduced} subdendrices of $S_{([n],B_\bullet)}$, see Section \ref{Gammavsdendrixactive}. Using Section \ref{Gammavsdendrixinert} this family lifts to a coherent family of inert subobjects of $([n],B_\bullet)$ indexed by the vertex-corollas of $([m],A_\bullet)$. In particular, each edge $([0],\11)\rIn([m],A_\bullet)$ is taken to a well-defined edge $([0],\11)\rIn([n],B_\bullet)$ in accordance with $\phi$. This defines maps $f_i:A_i\to B_{\phi(i)}$ and hence a $\Gamma$-tree morphism $(\phi,f)$. This $\Gamma$-tree morphism corresponds to the given dendrix morphism because any subdendrix of a dendrix is completely determined by its leaves and its root, and the latter are consistently specified by $(\phi,f)$.\qed

\vspace{5ex}

\noindent{\small\sc Univ. C\^ote d'Azur, Lab. J. A. Dieudonn\'e, Parc Valrose, 06108 Nice, France.}\hspace{2em}\emph{E-mail:}
cberger$@$math.unice.fr

\end{document}